\newtheorem{theorem}[equation]{Theorem}
\newtheorem{corollary}[equation]{Corollary}
\newtheorem{definition}[equation]{Definition}
\newtheorem{lemma}[equation]{Lemma}
\newtheorem{proposition}[equation]{Proposition}
\newtheorem{remark}[equation]{Remark}
{\theorembodyfont{\rmfamily}

\newtheorem{remarkplain}[equation]{Remark}

\newtheorem{exampleplain}[equation]{Example}

}
\newcommand{\qed}{\hfill $\square$ \medskip}
\newenvironment{proof}[1][Proof]{\noindent\textbf{#1.} }{\qed}
\newcommand{\Aut}{\text{Aut}}
\renewcommand{\int}{\text{int}}
\newcommand{\D}{\mathcal D}
\newcommand{\Hom}{\text{Hom}}
\newcommand{\Ad}{\text{Ad}}
\newcommand{\Lie}{\text{Lie}}
\newcommand{\caC}{\mathcal C}
\newcommand{\caA}{\mathcal A}
\newcommand{\R}{\mathbb R}
\newcommand{\C}{\mathbb C}
\newcommand{\Z}{\mathbb Z}
\newcommand{\N}{\text{N}}
\newcommand{\Q}{\mathbb Q}
\newcommand{\Gsc}{G_{\text{sc}}}
\renewcommand{\a}{\mathfrak a}
\newcommand{\ch}[1]{#1^\vee}
\renewcommand{\sec}[1]{\section{#1}
\renewcommand{\theequation}{\thesection.\arabic{equation}}
  \setcounter{equation}{0}}
\newcommand{\subsec}[1]{\subsection{#1}
\renewcommand{\theequation}{\thesubsection.\arabic{equation}}}
\renewcommand{\t}{\mathfrak t}
\newcommand{\g}{\mathfrak g}
\newcommand\inv{^{-1}}
\newcommand\wt{\widetilde}
\newcommand\wh{\widehat}
\newcommand{\Wconj}{[W]}
\newcommand{\Weconj}{[{W^e}]}
\newcommand{\Gssconj}{[{G^\text{ss}}]}
\renewcommand{\ss}{\text{ss}}
\newcommand{\e}{\text{e}}
\newcommand{\Gss}{G^\ss}
\newcommand{\Wext}{\negthinspace\negthinspace\phantom{a}^\delta W}
\newcommand{\Gext}{\negthinspace\negthinspace\phantom{a}^\delta G}
\newcommand{\Daffine}{\wt D(G,\delta)}
\def\fg{\mathfrak g}
\def\fc{\mathfrak c}
\def\ft{\mathfrak t}
\def\fs{\mathfrak s}
\def\ge{\geqslant}
\def\le{\leqslant}
\def\a{\alpha}
\def\b{\beta}
\def\G{\Gamma}
\def\d{\delta}
\def\p{\pi}
\def\t{\tau}
\def\th{\theta}
\def\l{\lambda}
\def\z{\zeta}
\def\i{^{-1}}
\newcommand{\twoAeven}{\phantom{}^2\negthinspace A_{2n}}
\newcommand{\twoAodd}{\phantom{}^2\negthinspace A_{2n+1}}
\newcommand{\twoD}{\phantom{}^2\negthinspace D_{n}}
\renewcommand{\sec}[1]{\section{#1}
\renewcommand{\theequation}{\thesection.\arabic{equation}}
  \setcounter{equation}{0}}
\font\temporary=manfnt
\def\dbend{{\temporary\char127}} 
\def\danger{\begin{trivlist}\item[]\noindent%
\begingroup\hangindent=3pc\hangafter=-2
\def\par{\endgraf\endgroup}%
\hbox to0pt{\hskip-\hangindent\dbend\hfill}\ignorespaces}
\def\enddanger{\par\end{trivlist}}
\begin{document}

\author{Jeffrey Adams}
\author{Xuhua He}
\author{Sian Nie}
\affil{Department of Mathematics, University of Maryland, jda@math.umd.edu}
\affil{Department of Mathematics, University of Maryland}
\affil{Current: The Institute of Mathematical Sciences and Department of Mathematics, The Chinese University of Hong Kong, Shatin, N.T., Hong Kong, xuhuahe@gmail.com}
\affil{Institute of Mathematics, Academy of Mathematics and Systems Science, Chinese Academy of Sciences, 100190, Beijing, China
  , niesian@amss.ac.cn}
\date{}                     

\title{From conjugacy classes in  the Weyl group to semisimple conjugacy classes}

\maketitle

\let\thefootnote\relax\footnotetext{X.~H.~was partially supported by
  NSF DMS-1801352. S.N. is supported in part by QYZDB-SSW-SYS007 and
  NSFC grant (Nos. 11501547, 11621061 and 11688101).}

\let\thefootnote\relax\footnotetext{{\bf Keywords}: algebraic groups,
  Weyl groups, elliptic conjugacy classes, semisimple conjugacy
  classes}

\let\thefootnote\relax\footnotetext{{\bf 2010 Mathematics Subject
    Classification}: Primary 20G07, Secondary: 20F55, 20E45}

\noindent{\bf Abstract:} Suppose $G$ is a connected complex semisimple group
and $W$ is its Weyl group. The lifting of an element of $W$ to $G$ is
semisimple. This induces a well-defined map from the set of elliptic
conjugacy classes of $W$ to the set of semisimple conjugacy classes of
$G$. In this paper, we give a uniform algorithm to compute this
map. We also consider the twisted case.

\begin{center}

  \large \emph{To Bert Kostant with admiration}
  \end{center}

\sec{Introduction}

Let $G$ be a connected complex semisimple group.
Choose a Cartan subgroup $T\subset G$, let $N(T)=N_G(T)$ be the normalizer of $T$ in $G$,
and let $W=N(T)/T$ be the Weyl group.
We have the exact sequence
\begin{equation}
\label{e:exactW}
1\rightarrow T\rightarrow N(T)\overset p\rightarrow W\rightarrow 1.
\end{equation}
In \cite{AH} we considered the question of whether this sequence splits,
and more generally if $w\in W$, what can be said about the orders of elements of $p\inv(w)$.
Here we consider a related problem.

Let $\Wconj$ be the set of conjugacy classes in $W$. Let $\Gss$ be the
semisimple elements of $G$, and $\Gssconj$ the set of semisimple
conjugacy classes. We have $N(T)\subset \Gss$. For $w\in W$ write $[w]$ for the $W$-conjugacy
class of $w\in W$. Similarly for $g\in \Gss$ let $[g]$ be the
$G$-conjugacy class of $g$.

Suppose $w\in W$ and $n_w\in p\inv(w)\in N(T)$. For general $w$ there
are many choices of $n_w$, even up to conjugacy.  However in an
important special case the choice of $n_w$ is unique up to
conjugation. We say $w\in W$ is {\it elliptic} if it has no nontrivial fixed
vectors in the reflection representation.  It is well known that if
$w$ is elliptic then any two elements of $p\inv(w)\subset N(T)$ are
$T$-conjugate (see Lemma \ref{l:conjugate}). 

\begin{definition}
\label{d:basic}
Suppose $w\in W$ is elliptic.
Choose $n_w\in p\inv(w)\subset N(T)$, and define
$$
\Psi: \Weconj\rightarrow\Gssconj: [w]\rightarrow [n_w].
$$
This map is well defined on the level of conjugacy classes by Lemma \ref{l:conjugate}.
\end{definition}

This map has been studied by many people, including Reeder
\cite{reeder_torsion} and Reeder-Levy-Yu-Gross \cite{rgly}, for
applications to representations of $p$-adic groups and number theory.
For the exceptional groups the map $\Psi$ has been computed using
case-by-case calculations: the papers \cite{levy_exceptional}, \cite{bouwknegt}
and \cite{rgly} cover all cases, with some overlaps, and include other
information. The main result of this paper is a uniform algorithm to compute
the map $\Psi$ which is free of any case-by-case considerations.

We give this algorithm in the next section, after first discussing the
twisted case. We have implemented the algorithm in the {\it Atlas of
  Lie groups and Representations} software \cite{atlas_software} (see
the file {\tt weyltosemisimple.at}). It can be used to compute $\Psi$
for any semisimple group, and Section \ref{s:tables} includes  complete tables for the
(twisted and untwisted) exceptional groups.

\subsec{The twisted case}
\label{s:twisted}

A pinning is a triple $(G,T,\{X_\alpha\mid \alpha\in\Pi\})$,
where $\Pi$ is a set of simple roots of $T$ in $G$, and for each
$\alpha\in\Pi$, $X_\alpha$ is an $\alpha$-weight vector.  
We say an automorphism of $G$ is {\it distinguished} if it preserves some pinning.  An
inner automorphism is distinguished if and only if it is trivial as
any two pinnings are conjugate by a unique inner automorphism,
and the
outer automorphism group of $G$ is isomorphic to the group of
automorphisms of a pinning.

Suppose $\Pi$ is a fixed set of simple roots, and
$(G,T,\{X_\alpha\mid\alpha\in\Pi\})$ is a pinning.
Suppose $\delta$ is an automorphism of $G$, preserving
the pinning.
Then $\delta$ induces an
automorphism of $W$, which we also denote by $\delta$. We define
$$
\Gext=G \rtimes\langle\delta\rangle, \quad \Wext=W\rtimes\langle\delta\rangle.
$$
Let $N_{\Gext}(T)$ be the normalizer of $T$ in $\Gext$. 
Then $\Wext\simeq N_{\Gext}(T)/T$, and we write $p:N_{\Gext}(T)\rightarrow \Wext$.

Let $\Delta=\Delta(T,G)$ be the root system, and
$V=\Q\langle\Delta\rangle$. This is a representation of $\Wext$ which
we refer to as the reflection representation.  We say $y\in\Wext$ is
elliptic if it has no nontrivial fixed vectors in the reflection
representation.  Write $[y]$ for the $\Wext$-conjugacy class of
$y$. We consider elements $y\in W\delta$, and write $[(W\delta)^e]$
for the $\Wext$-conjugacy classes of elliptic elements in $W\delta$.

From the identity $\delta\inv(\delta w)\delta=w(\delta w)w\inv$ it
follows that two elements of $W\delta$ are $\Wext$-conjugate if and
only if they are $W$-conjugate.

We say an element $y$ of $\Gext$ is {\it semisimple} if $\Ad(y)$ is
semisimple.  Write $[y]$ for the $\Gext$-conjugacy class of
$y\in \Gext$. We consider elements $y\in G\delta$, and write
$[(G\delta)^{\ss}]$ for the $G$-conjugacy classes of semisimple
elements in $G\delta$.  As in the case of $\Wext$, two elements of
$G\delta$ are $\Gext$-conjugate if and only if they are $G$-conjugate.

\begin{lemma}
\label{l:conjugate}
Suppose $w\in W\delta$ is elliptic.
Then any two
elements of $p\inv(w)\subset N_{\Gext}(T)$ are $T$-conjugate.
\end{lemma}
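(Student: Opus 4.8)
The plan is to make the fibre $p\inv(w)$ completely explicit and then reduce the statement to the surjectivity of a single homomorphism of $T$. Fix one lift $n_w\in p\inv(w)$, so that $p\inv(w)=Tn_w=\{tn_w\mid t\in T\}$; it suffices to show that every $tn_w$ is $T$-conjugate to $n_w$. For $s\in T$ I would compute $s\,n_w\,s\inv=s\,(n_w s\inv n_w\inv)\,n_w=s\,w(s)\inv\,n_w$, using that conjugation by $n_w$ realises the action of $w\in\Wext$ on $T$. Hence the $T$-conjugacy class of $n_w$ inside $p\inv(w)$ is exactly $\{\phi(s)\,n_w\mid s\in T\}$, where $\phi:T\to T$ is the map $\phi(s)=s\,w(s)\inv$. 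Since $T$ is abelian this $\phi$ is a homomorphism of algebraic groups, and the lemma becomes equivalent to the assertion that $\phi$ is surjective.

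To prove surjectivity I would use that $T$ is a connected complex torus, so that the image of the endomorphism $\phi$ is a closed connected subgroup and $\phi$ is surjective if and only if its differential $d\phi:\t\to\t$ is an isomorphism (equivalently, if and only if $\ker\phi$ is finite). Differentiating $\phi(s)=s\,w(s)\inv$ gives $d\phi=1-w$ acting linearly on $\t=\Lie(T)$. It therefore remains to check that $1-w$ is invertible on $\t$, i.e. that $w$ has no nonzero fixed vector in $\t$.

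This final point is where the ellipticity hypothesis enters and is the only delicate step. Because $G$ is semisimple, the reflection representation $V=\Q\langle\Delta\rangle$ is canonically $X^*(T)\otimes\Q$, while $\t=X_*(T)\otimes\C$ is the complexification of the dual space $V^*$, with $w$ acting by the contragredient action. For any invertible operator the multiplicity of the eigenvalue $1$ on a finite-dimensional space and on its dual coincide, so $\dim V^w=\dim(V^*)^w$; thus $w$ is elliptic on $V$ precisely when it has no nonzero fixed vector on $\t$. Combining the three steps, ellipticity forces $1-w$ to be invertible on $\t$, hence $\phi$ is surjective, and so all elements of $p\inv(w)$ are $T$-conjugate. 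The hard part will be the duality bookkeeping here: one must track that \emph{elliptic} is defined through the action on $V$, whereas the conjugation computation naturally produces the action on $\t\cong V^*\otimes\C$, and verify that passing between $V$ and $V^*$ preserves the dimension of the fixed space; the reduction of surjectivity to invertibility of $d\phi$ is routine once one uses that $T$ is connected.
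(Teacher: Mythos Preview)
Your proof is correct and follows essentially the same approach as the paper's own proof: both compute $t\,n_w\,t^{-1}=t\,w(t^{-1})\,n_w$ and reduce to surjectivity of the endomorphism $t\mapsto t\,w(t^{-1})$ of $T$, then invoke ellipticity together with the semisimplicity of $G$ to see this map has finite kernel and hence is surjective. The only difference is cosmetic: the paper states the finite-kernel-implies-surjective step in one line, whereas you spell out the passage through $d\phi=1-w$ and the $V$ versus $V^*$ duality; this extra bookkeeping is sound and indeed clarifies exactly where semisimplicity of $G$ is used (namely, to identify $V$ with $X^*(T)\otimes\Q$).
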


\begin{proof}
The result is known, e.g. see \cite[Remark 4.1.1]{debacker_reeder}. We include a proof for completeness. 

Suppose $g\in p\inv(w)\subset \N(T)\delta$. 
Then for $t\in T$,
$$
tgt\inv=tw(t\inv)g.
$$
Since $w$ is elliptic and $G$ is semisimple, the map $t\rightarrow tw(t\inv)$ has finite
kernel, so is surjective.

Now suppose $g_1,g_2\in p\inv(w)$.
Since $w\in W\delta$, $g_1,g_2\in G\delta$. Write $g_1=h_1\delta,g_2=h_2\delta$ with $h_1,h_2\in G$.
By the previous discussion choose $t$ so that $tw(t\inv)=h_2h_1\inv$. Then $tg_1t\inv=g_2$.
\end{proof}

\begin{definition}
 \label{d:basictwisted}
Suppose $w\in W\delta$ is elliptic.
Choose $n_w\in p\inv(w)\subset N(T)\delta$, and define
$$
\Psi: [(W\delta)^\e]\rightarrow[(G\delta)^{\ss}]: [w]\rightarrow [n_w].
$$
This map is well defined on the level of conjugacy classes by Lemma \ref{l:conjugate}.
\end{definition}

If $w\in W\delta$ is not elliptic then the lifts $n_w$ are not all $G$-conjugate. Nevertheless
by realizing $w$ as an elliptic element in a Levi subgroup one can define a canonical map
from $[(W\delta)^\e]$ to $[(G\delta)^{\ss}]$. See Section \ref{s:nonelliptic}.

We've stated the result over $\C$. The algorithm applies over any
algebraically closed field of characteristic $0$, and in general with
a weak restriction on the characteristic. See Remark
\ref{r:otherfield}.

The authors thank Tom Haines for numerous helpful discussions and thank the referees for their suggestions and comments. 

\sec{The Algorithm}
\label{const}

Assume $G$ is a semisimple reductive group defined over $\C$, and
$\delta$ is an automorphism of $G$ of finite order. We fix a pinning
$(G,T,\{X_\alpha\mid\alpha\in\pi\})$ that is preserved by $\d$.   We write $\Delta$ for
the root system and $V=\Q\langle\Delta\rangle$.  Write $\ch\alpha$ for
the coroot associated to $\alpha\in\Delta$, $\ch\Delta$ the set of
coroots, and $\ch V_\Q=\Q\langle\ch\Delta\rangle$.
Write $\langle\,,\,\rangle$ for the canonical pairing between $\ch V$ and $V$.

Suppose $w\in \Wext$. Let
\begin{subequations}
\renewcommand{\theequation}{\theparentequation)(\alph{equation}}
\label{e:algorithm}
\begin{equation}
\Gamma_w=\{0\le\theta\le\pi\mid\text{such that } e^{i\theta}\text{ is an eigenvalue of $w$\text{ on }}V_\C.\}
\end{equation}
Write
\begin{equation}
\label{e:thetas}
\Gamma_w=\{\theta_1,\theta_2,\dots,\theta_k\}\text{ with }0\le \theta_1<\theta_2<\dots<\theta_k\le\pi.
\end{equation}
For $\theta\in \Gamma_w$ let
\begin{equation}
V(w,\theta)=\{v\in V_\R\mid w(v)+w\inv(v)=2\cos(\theta)v\}.
\end{equation}
Then $V(w,\theta)_\C$ is the direct sum of the eigenspaces of
$w$ on $V_\C$ with eigenvalues $e^{\pm i\theta}$.

For $1 \le i \le k$ set
$$
F_i=\sum_{j=1}^i V(w,\theta_j)
$$
and set $F_0=0,\theta_0=0$. This gives a filtration
\begin{equation}
\label{e:F_k}
0=F_0\subsetneqq F_1 \subsetneqq \dots\subsetneqq F_{k}=V
\end{equation}
with strict containments.
For $0 \le i \le k$, set
\begin{equation}
\Delta_i=\{\alpha\in\Delta\mid \langle\ch\alpha,F_i\rangle=0\}.
\end{equation}
Then each $\Delta_i$ is a root system, and set
\begin{equation}
W_i=W(\Delta_i).
\end{equation}
Thus we have
\begin{equation}
  \label{e:levis}
  \begin{aligned}
\Delta&=\Delta_0\supseteq \Delta_1\supseteq \dots\supseteq \Delta_k=\emptyset,\\
W&=W_0\supseteq W_1\supseteq \dots\supseteq W_k=\{1\}.\\
\end{aligned}
\end{equation}

By \cite{he_nie_minimal_finite}*{Lemma 5.1} after conjugating by $W$
we may assume  that all the $\Delta_i$ are standard, i.e., the
corresponding Levi subgroups are standard Levi subgroups.

For each $i$ set
\begin{equation}
\begin{aligned}
\Delta^+_i&=\Delta^+\cap \Delta_i,\\
\ch\rho_i&=\frac12\sum_{\alpha\in\Delta_i^+}\ch\alpha.
\end{aligned}
\end{equation}
We define rational coweights
$\{\ch\lambda_0,\dots, \ch\lambda_{k}\}$ by downward induction.
Set $\ch\lambda_k=0$, and for $0\le j\le k-1$ define
\begin{equation}\label{def-1}
\begin{aligned}
\ch\lambda_{j}=\frac{d(\theta_{j+1}-\theta_{j})}{2\pi}\ch\rho_j+\overline{\ch\lambda_{j+1}},
\end{aligned}
\end{equation}
where $d$ is the order of $w$, and $\overline{\ch\lambda_{j+1}}$ is the element in the $W_j$-orbit of $\ch\lambda_{j+1}$ which is dominant for $\Delta^+_j$.
\end{subequations}

\begin{theorem}
\label{t:main}
Suppose $w\in W\delta$ is of order $d$. Construct $\ch\lambda_0$ by the algorithm. Then
some lift $n_w$ of $w$ is $G$-conjugate to $\exp(2\pi \sqrt{-1}\ch\lambda_0/d)\delta$.

In particular if $w\in (W\delta)^e$ then 
\begin{equation}
  \label{e:main}
  \Psi([w])=[\exp(2\pi \sqrt{-1}\ch\lambda_0/d)\delta].
\end{equation}
\end{theorem}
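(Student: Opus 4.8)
The plan is to induct on the number $k$ of distinct angles in $\Gamma_w$, peeling off the outermost layer $F_1=V(w,\theta_1)$ at each step. The downward recursion \eqref{def-1} is tailored to exactly this: writing $\theta_0=0$ it reads $\ch\lambda_0=\frac{d\theta_1}{2\pi}\ch\rho_0+\overline{\ch\lambda_1}$, and since every term of $\ch\lambda_j/d$ is manifestly independent of $d$, the point $\exp(2\pi\sqrt{-1}\,\ch\lambda_0/d)$ depends only on the angles. The structural input is that $F_1$, being a sum of $w$-eigenspaces, is $w$-stable, and the defining condition of $\Delta_1$ is $w$-equivariant, so $w$ normalizes $\Delta_1$, $W_1$, and the Levi $L_1$ with root system $\Delta_1$. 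Identifying $V$ with $\ch V$ via the $W$-invariant form, the roots of $\Delta_1$ lie in $F_1^\perp=\bigoplus_{j\ge 2}V(w,\theta_j)$, so on $V_1=\Q\langle\Delta_1\rangle$ the element $w$ acts with angles exactly $\theta_2,\dots,\theta_k$. In particular $1$ is not an eigenvalue, and $w$ restricts to an elliptic element $w_1$ of the extended Weyl group of $L_1$.

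Before iterating I would replace $w$ by a good minimal-length representative of its class: by \cite{he_nie_minimal_finite}*{Lemma 5.1} the $\Delta_i$ may be taken standard, and I want moreover that $w$ stabilizes a pinning of $L_1$, so that $w_1\in W_1\delta_1$ for a pinning-preserving automorphism $\delta_1$ of $L_1$ and the inductive hypothesis applies to $(L_1,\delta_1,w_1)$. The algorithm run inside $L_1$ reproduces $\Delta_i,W_i,\ch\rho_i$ for $i\ge 1$; comparing its recursion with \eqref{def-1} shows that $\overline{\ch\lambda_1}$ is, up to a $W$-conjugation, the sum of the coweight $\ch\mu$ that the $L_1$-algorithm attaches to $w_1$ and a correction coming from resetting the baseline angle from $\theta_1$ back to $0$. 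Since $\exp(2\pi\sqrt{-1}\,\ch\mu/d)\delta$ is by induction the lift of $w_1$ in $L_1\delta$, tracking this correction should yield, up to conjugacy, $\exp(2\pi\sqrt{-1}\,\ch\lambda_0/d)\delta\sim z_1\cdot\exp(2\pi\sqrt{-1}\,\ch\mu/d)\delta$, where $z_1=\exp(\sqrt{-1}\,\theta_1(\ch\rho_0-\ch\rho_1))$.

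The feature that makes this clean is that $\ch\rho_0-\ch\rho_1=\frac12\sum_{\alpha\in\Delta^+\smallsetminus\Delta_1^+}\ch\alpha$ is $W_1$-invariant, so $z_1$ is central in $L_1$; thus $z_1$ commutes with the $L_1$-lift and the product is again semisimple. What remains is the geometric heart of the induction: to show that $z_1\cdot(L_1\text{-lift of }w_1)$ is $G$-conjugate to the full lift $n_w$. Here $z_1$ acts on $\g_\alpha$, for $\alpha$ in the nilradical of $P_1=L_1U_1$, by $e^{\sqrt{-1}\,\theta_1\langle\ch\rho_0-\ch\rho_1,\alpha\rangle}$, and I expect this to encode precisely the rotation by $\theta_1$ that $w$ performs on $F_1$ but that $w_1$ omits. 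I would establish this either by choosing $n_w$ adapted to $P_1$ and conjugating explicitly, or by checking that the two semisimple elements have equal centralizers and identical eigenvalues of $\Ad$ on $\g$.

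The base case $k=1$ is where the real content sits: there $F_1=V$, $\Delta_1=\emptyset$, and the claim reduces to $n_w\sim\exp(\sqrt{-1}\,\theta_1\ch\rho)\delta$ for $w$ elliptic with the single angle $\theta_1$. The model is $w=-1$, $\theta_1=\pi$, where the identification reduces root-$\mathrm{SL}_2$ by root-$\mathrm{SL}_2$ to $\left(\begin{smallmatrix}0&1\\-1&0\end{smallmatrix}\right)\sim\diag(\sqrt{-1},-\sqrt{-1})$; for a general single angle I would match the eigenvalues of $\Ad(n_w)$, computed from the $w$-orbits on $\Delta$ and the structure constants of the chosen lift, against those of $\Ad(\exp(\sqrt{-1}\,\theta_1\ch\rho))$, then upgrade equality of eigenvalue data to genuine conjugacy using the theory of regular elements. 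The main obstacle, I expect, is the geometric step of the inductive passage: proving that multiplication by the central element $z_1$ truly restores the outer rotation, i.e.\ that $z_1\cdot n_{w_1}\sim_G n_w$, together with the careful bookkeeping of the repeated make-dominant $W$-conjugations so that the final answer is expressed relative to the single fixed pinning. The twisted case requires no new idea beyond allowing $w$ to act on each $L_i$ through an automorphism incorporating $\delta$.
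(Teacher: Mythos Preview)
Your architecture matches the paper's: induct by passing to the standard Levi $L_1$ with root system $\Delta_1$, apply the hypothesis there with the pinned automorphism $\delta_1$ coming from the good factorization $w=\delta x_1x_2\cdots x_k$, and reconcile using a correction built from $\ch\rho-\ch\rho_1$; and you are right that the single--angle case is precisely Proposition~\ref{regular}. But you have mislocated the difficulty. The step you call the ``geometric heart'', $n_w\sim_G z_1\cdot n_{w_1}$, is free when $w$ is elliptic: any lift of $w$ produced inside ${}^{\delta_1}L_1$ is already a lift of $w$ in $G\delta$, and multiplying by $z_1\in T$ gives another such lift, so Lemma~\ref{l:conjugate} hands you $T$--conjugacy for nothing. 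The genuine obstacle is the step you glide over when you write the inductive output as $\exp(2\pi\sqrt{-1}\,\ch\mu/d)\,\delta$. The inductive hypothesis delivers
\[
n(w)\ \sim_G\ n(\delta_1)\exp\bigl(2\pi\sqrt{-1}\,\ch\lambda_{0,L_1}/d\bigr),
\]
with $n(\delta_1)=\delta\,n(x_1)$, and this must be converted to $\delta\exp(2\pi\sqrt{-1}\,\ch\lambda_0/d)$. These project to \emph{different} elements $\delta_1$ and $\delta$ of $W\delta$; since $\delta_1$ is not elliptic in general, no central multiplication or appeal to Lemma~\ref{l:conjugate} will close the gap, and ``equal $\Ad$--eigenvalues'' does not determine a semisimple class in $G\delta$.

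The paper fills exactly this gap with machinery you do not mention. One conjugates by $n(z)^{-1}$, for a specific $z\in W$ taking $\ch\lambda_1$ to its dominant representative, which turns $n(\delta_1)$ into $n(y)$ with $y=z^{-1}\delta_1 z$ lying in an auxiliary standard Levi $L'$ (roots orthogonal to $V^{y}$); the Langlands--Shelstad cocycle formula (Lemma~\ref{factor}) records the torus discrepancy this conjugation introduces. An averaging trick (Lemma~\ref{average}) together with the root--counting Lemma~\ref{tech} evaluates that discrepancy as $\exp\bigl(2\pi\sqrt{-1}\,c(\ch\rho-z^{-1}\ch\rho_1-\ch\rho_{L'})/d\bigr)$ with $c=d\theta_1/2\pi$. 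Finally Proposition~\ref{regular} is applied \emph{inside $L'$ at every step of the induction}, not only at the base, to replace $n(y)$ by $\delta\exp(\sqrt{-1}\theta_1\ch\rho_{L'})$; the $\ch\rho_{L'}$ contributions cancel and one lands on $\ch\lambda_0$. So the missing ingredients are the second Levi $L'$, the cocycle/counting computation, and the repeated (not one--off) use of the regular case.
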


\begin{remarkplain}
\label{r:full}
The algorithm  uses the full sequence $(\th_1, \ldots, \th_k)$.
One may replace the full sequence by any subsequence that is
admissible in the sense of \cite{he_nie_minimal_finite}*{subsection
  5.2}.  The proof of the Theorem applies with no change in this
generality. The element $\exp(2\pi \sqrt{-1}\ch\lambda_0/d)\delta \in \Gext$ 
obtained from an admissible subsequence of $(\th_1, \ldots, \th_k)$
is, in general, different from the element obtained here using the
full sequence.
\end{remarkplain}

We will use the following well-known conjugacy result \cite{serre_linear}*{Chapter 13, Prop. 2.5}.
We include a proof since it plays a role in the subsequent Proposition.

\begin{proposition}
Suppose $W$ is a finite group all of whose characters take values in $\Q$. 
Suppose $w\in W$ has order $d$. If $(d,k)=1$, then 
 $w^k$ is conjugate to $w$.
\end{proposition}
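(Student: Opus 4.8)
The plan is to reduce the claim to two standard facts from the character theory of finite groups: first, that the irreducible characters of $W$ separate its conjugacy classes, so that two elements $g,h\in W$ are conjugate if and only if $\chi(g)=\chi(h)$ for every irreducible character $\chi$; and second, the compatibility between the Galois action on cyclotomic character values and the operation of raising group elements to powers. The rationality hypothesis then enters at the very end to collapse the Galois action to the identity.

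Concretely, I would first fix an irreducible representation $\rho$ of $W$ affording the character $\chi$. Since $w$ has order $d$, the operator $\rho(w)$ satisfies $\rho(w)^d=1$, hence is diagonalizable with each eigenvalue a $d$-th root of unity, and $\chi(w)=\sum_j \zeta_j$ with $\zeta_j\in\Q(\zeta_d)$, where $\zeta_d=e^{2\pi i/d}$. Thus every value of every irreducible character on elements of order dividing $d$ lies in the cyclotomic field $\Q(\zeta_d)$, whose Galois group over $\Q$ is canonically $(\Z/d\Z)^\times$.

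The heart of the argument is the identity $\sigma_k(\chi(w))=\chi(w^k)$, where, for $k$ with $(k,d)=1$, $\sigma_k\in\mathrm{Gal}(\Q(\zeta_d)/\Q)$ is the automorphism determined by $\sigma_k(\zeta_d)=\zeta_d^k$. Applying $\sigma_k$ to $\chi(w)=\sum_j\zeta_j$ replaces each eigenvalue $\zeta_j$ of $\rho(w)$ by $\zeta_j^k$; but the multiset $\{\zeta_j^k\}$ is precisely the multiset of eigenvalues of $\rho(w)^k=\rho(w^k)$, whose sum is $\chi(w^k)$. This holds for every irreducible $\chi$ simultaneously.

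Finally I would invoke the hypothesis that all character values of $W$ are rational. Since $\chi(w)\in\Q$, it is fixed by $\sigma_k$, so $\chi(w^k)=\sigma_k(\chi(w))=\chi(w)$ for every irreducible character $\chi$. Because the irreducible characters separate conjugacy classes, it follows that $w^k$ and $w$ are conjugate in $W$, as claimed. The only step demanding genuine care is the Galois–power identity $\sigma_k(\chi(w))=\chi(w^k)$; I expect that to be the main point, while the eigenvalue computation and the separation of classes are routine.
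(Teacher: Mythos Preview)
Your proof is correct and is essentially identical to the paper's own argument: both diagonalize $\rho(w)$ to write $\chi(w)$ as a sum of $d$-th roots of unity, invoke the Galois automorphism $\sigma_k$ of $\Q(\zeta_d)/\Q$ to obtain $\sigma_k(\chi(w))=\chi(w^k)$, use the rationality hypothesis to conclude $\chi(w^k)=\chi(w)$ for all $\chi$, and finish by separation of conjugacy classes. The only cosmetic difference is that the paper writes the eigenvalue identity $\theta_\pi(w^k)=\sum z_i^k$ directly rather than phrasing it as a Galois-compatibility statement first.
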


It is well known that all representations of Weyl group take integral values \cite{springer_regular}*{Theorem 8.5}.

\medskip

\begin{proof}
Let $F=\Q(\zeta_d)$ where $\zeta_d$ is a primitive $d^{th}$ root of unity.
If $\pi$ is a representation of $W$, with character $\theta_\pi$, then
$$
\theta_\pi(w)=\sum_{i=1}^r z_i,
$$
where each $z_i$ is a $d^{th}$ root of unity in $F$. Then
$$
\theta_\pi(w^k)=\sum_{i=1}^r z_i^k.
$$
Since $(k,d)=1$ the map $\zeta_d\rightarrow \zeta_d^k$ induces an automorphism $\tau$ of
$F/\Q$. By assumption  $\sum_i z_i\in \Q$, so
$$
\theta_\pi(w^k)=\sum_i z_i^k=\sum_i \tau(z_i)=\tau(\sum z_i)=\sum z_i=\theta_\pi(w).
$$
Since the characters separate conjugacy classes this implies $w$ is conjugate to $w^k$.

\end{proof}

We need a slight generalization of this.

\begin{proposition}
  \label{p:g^k}
Suppose $W$ is as in the previous Proposition, and  $\delta$
is an automorphism of $W$ of finite order. 
Let $\Wext=W\rtimes\langle\delta\rangle$. Suppose
$w\in W\delta\subset \Wext$ has order $d$.
Suppose $(k,d)=1$ and $w^k$ is also contained in $W\delta$. 
Then $w^k$ is conjugate to $w$.
\end{proposition}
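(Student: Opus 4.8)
The plan is to deduce the statement from the previous Proposition, applied not to $W$ but to the full group $\Wext$. Since irreducible characters separate conjugacy classes, $w$ and $w^k$ are $\Wext$-conjugate as soon as $\chi(w)=\chi(w^k)$ for every $\chi\in\text{Irr}(\Wext)$; and whenever $(k,d)=1$ one has $\chi(w^k)=\sigma_k(\chi(w))$, where $\sigma_k$ is the automorphism of $\Q(\zeta_d)$ sending $\zeta_d\mapsto\zeta_d^k$ (the eigenvalues of $w$ on any representation are $d$-th roots of unity). Thus the whole statement becomes the assertion that $\sigma_k$ fixes each value $\chi(w)$, which is exactly what the proof of the previous Proposition gives \emph{provided the characters of $\Wext$ are rational}. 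I would also record at the outset the role of the hypothesis $w^k\in W\delta$: the quotient $\Wext/W\cong\langle\delta\rangle$ is abelian, so conjugation preserves the cosets $W\delta^j$, and $w\sim w^k$ can hold only if $w^k$ lies in the same coset as $w$. Writing $m$ for the order of $\delta$, this is precisely the condition $k\equiv 1\pmod m$, so the coset hypothesis is the exact compatibility needed and costs nothing.

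The substantive point is therefore to control the values $\chi(w)$ for $\chi\in\text{Irr}(\Wext)$ and $w\in W\delta$. Here I would run Clifford theory relative to the normal subgroup $W\trianglelefteq\Wext$ with cyclic quotient $\langle\delta\rangle$. If $\chi$ lies over a constituent $\psi\in\text{Irr}(W)$ that is \emph{not} $\delta$-stable, then $\chi$ is induced from the inertia subgroup of $\psi$, whose image in $\langle\delta\rangle$ is a proper subgroup and so omits $\delta$; since every $\Wext$-conjugate of $w$ stays in the normal coset $W\delta$, the induction formula shows $\chi$ vanishes identically on $W\delta$, and in particular $\chi(w)=\chi(w^k)=0$. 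The only characters that can be nonzero on $w$ are thus those lying over a $\delta$-stable (hence rational) $\psi$, and these have the form $\tilde\psi\otimes\lambda$ for an extension $\tilde\psi$ of $\psi$ to $\Wext$ and a character $\lambda$ of $\langle\delta\rangle$. Since $\lambda(\delta)$ is an $m$-th root of unity fixed by $\sigma_k$ (as $k\equiv1\pmod m$), the claim collapses to: $\sigma_k(\tilde\psi(w))=\tilde\psi(w)$.

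This last equality is the crux, and the step I expect to be the main obstacle. Writing $\tilde\psi^{\sigma_k}\cong\tilde\psi\otimes\mu$ for the Galois twist $\mu\in\widehat{\langle\delta\rangle}$ (well defined because $\psi$, being rational, is $\sigma_k$-fixed), one computes $\sigma_k(\tilde\psi(w))=\mu(\delta)\,\tilde\psi(w)$, so everything reduces to showing that the extension $\tilde\psi$ is $\sigma_k$-invariant, i.e. $\mu=1$. This is exactly where the hypothesis that $W$ is a Weyl group and $\delta$ a diagram automorphism must be used: then $\Wext$ is an extended Weyl group, every irreducible representation is realizable over $\Q$, and one may choose the intertwiner defining $\tilde\psi$ over $\Q$, forcing $\mu=1$. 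The Weyl-group structure cannot be dispensed with at this point — for a general finite group all of whose characters are rational the analogous conclusion fails (for instance $W=Q_8$ with its order-two outer automorphism gives $\Wext$ semidihedral of order $16$, where the element $w=i\delta$ of order $8$ is not conjugate to $w^5$). Granting $\mu=1$, equivalently the rationality of $\Wext$, the previous Proposition applied to $\Wext$ and to $w$ yields $w\sim_{\Wext}w^k$ for all $k$ with $(k,d)=1$; together with the first paragraph this proves the statement, the conjugacy automatically taking place inside the coset $W\delta$.
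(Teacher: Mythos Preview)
Your strategy is exactly the paper's: show $\chi(w)=\chi(w^k)=\sigma_k(\chi(w))$ for every $\chi\in\text{Irr}(\Wext)$ by checking that $\sigma_k$ fixes the character values, and you correctly extract $k\equiv 1\pmod m$ from the hypothesis $w^k\in W\delta$. Your $Q_8$ remark is more than a side comment: it shows the Proposition is \emph{false} as literally stated (for arbitrary $W$ with rational characters), so some Weyl-group input is genuinely needed at exactly the point you identify.

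The gap is your closing claim that ``every irreducible representation of $\Wext$ is realizable over $\Q$''. This already fails for $W=W(D_4)$ with the triality automorphism of order $m=3$: the two nontrivial linear characters of the quotient $\Wext/W\cong C_3$ take the irrational values $\zeta_3,\zeta_3^2$ on $\delta$. What the argument actually needs---and what the paper invokes---is only that the characters of $\Wext$ lie in $E=\Q(\zeta_m)$. Since you have already observed that $\sigma_k$ fixes every $m$-th root of unity, it restricts to the identity on $E$, and the proof of the previous Proposition runs verbatim with $E$ in place of $\Q$: the eigenvalues of $w$ lie in $E(\zeta_d)$, the map $\zeta_d\mapsto\zeta_d^k$ is in $\text{Gal}(E(\zeta_d)/E)$, and it fixes $\chi(w)\in E$. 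Your Clifford analysis (vanishing off the stable constituents, the twist $\mu$) is then subsumed in the single statement that character values lie in $E$; in particular $\mu=1$ follows immediately once $\tilde\psi$ takes values in $E$ rather than in $\Q$.

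If you want to keep the intertwiner formulation, the right claim is that for each $\delta$-stable $\psi$ \emph{one} extension $\tilde\psi$ (not all $m$ of them) can be realized over $\Q$, equivalently the rational intertwiner $T_0$ can be normalized so that $T_0^{\,m}=I$. Your own $Q_8$ example shows why this is not a formal consequence of Clifford theory plus rationality of $W$: there the $2$-dimensional irreducible has Schur index $2$, no rational model exists, and one computes $T_0^{\,2}=-2I$ after any choice, whence $\tilde\psi(i\delta)=\pm i\sqrt2\notin\Q(\zeta_2)$. The paper sidesteps this by asserting the $\Q(\zeta_m)$ bound directly; note, however, that its appeal to ``Clifford theory'' alone is, by the same $Q_8$ example, not a complete justification either---the Weyl-group hypothesis (realizability of $W$-representations over $\Q$, and the fact that $\delta$ permutes a rational basis) is doing real work here.
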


Recall (Section \ref{s:twisted}) the statements with $W$-conjugacy or
$\Wext$-conjugacy are equivalent.

\begin{proof}
  Let $m$ be the order of $\delta$. By Clifford theory the characters of $\Wext$ are defined over
  $\Q(\zeta_m)$ where $\zeta_m$ is a primitive $m^{th}$ root of $1$.
  Replace $\Q$ with $E=\Q(\zeta_m)$ in the proof of the previous
  Proposition, and consider the field $E(\zeta_d)$. 
  The condition $w^k\in W\delta$ implies $k=1\pmod m$, so $\zeta_m^k=\zeta_m$ and 
  the map $\zeta_d\rightarrow \zeta_d^k$ induces an automorphism of $E(\zeta_d)/E$.
The  rest of the proof goes through with minor changes.
\end{proof}

\begin{remarkplain}
\label{r:otherfield}
Suppose $F$ is an algebraically closed field and $G$ is a connected semisimple algebraic group over $F$. Assume furthermore that the order of $\Wext$ is invertible in $F$.
Then all of the elements of $N_{\Gext}(T)$ are semisimple
and a version of Theorem \ref{t:main} holds in this setting. The only issue is to make
sense of the right hand side of \eqref{e:main} over $F$. 

Suppose $\ch\lambda_0$ is constructed as in the algorithm. Choose $m\in\Z$ so that $m\ch\lambda_0\in X_*$,
and choose a primitive $dm^{th}$ root of unity $\zeta_{dm}\in F$. Then
$$
\Psi([w])=[\delta (m\ch\lambda_0)(\zeta_{dm})]
$$
where we view $m\ch\lambda_0$ as a one-parameter subgroup
corresponding to $m\ch\lambda_0:F^\times\rightarrow T(F)$. By
Proposition \ref{p:g^k} this is independent of the choices of $m$ and
$\zeta_{dm}$.

\end{remarkplain}

\sec{Digression on  good elements}
\label{s:digression}

The algorithm in Section \ref{const} is  motivated in part by
the construction of the good elements in a given conjugacy class of
$\Wext$.
The good elements in $W$ were introduced by Geck and Michel in
\cite{geck_michel_good} and the notion was generalized to $\Wext$ by Geck, Kim and
Pfeiffer in \cite{gkp}.
In this section we discuss this
construction.

Let $B^+$ be the braid monoid associated with $(W,S)$. There is a
canonical injection $j:W\longrightarrow B^+$ identifying the
generators of $W$ with the generators of $B^+$ and
satisfying $j(w_1w_2)=j(w_1)j(w_2)$ for $w_1,w_2\in W$ whenever
$\ell(w_1w_2)=\ell(w_1)+\ell(w_2)$.

Now the automorphism $\delta$ induces an automorphism of $B^+$, which
is still denoted $\delta$. Set ${}^{\delta} B^+=B^+ \rtimes
\langle\delta\rangle$. Then $j$ extends in a canonical way to an
injection $\Wext \to {}^{\delta} B^+$, which we still denote by
$j$. We will simply write $\underline w$ for $j(w)$.

By definition, $w \in \Wext$ is a {\it good element} if there exists a
strictly decreasing sequence $\Pi_0 \supsetneq \Pi_1 \supsetneq \cdots
\supsetneq \Pi_l$ of subsets of $\Pi$ and even positive integers
$d_0,\cdots,d_l$ such
that $${\underline{w}}^d=\underline{w_0}^{d_0}\cdots\underline{w_l}^{d_l}.$$
Here $d$ is the order of $w$ and $w_i$ is the longest element of the
parabolic subgroup of $W$ generated by $\Pi_i$.

It was proved in \cite{geck_michel_good}, \cite{gkp} and \cite{he_minimal_length_double_cosets} that for any
conjugacy class of $\underline W$, there exists a good minimal length
element. In \cite{he_nie_minimal_finite}, the second and third-named authors
gave a general proof, which also provides an explicit construction of good
minimal length elements.

Now we recall the construction in \cite{he_nie_minimal_finite}.
Let $F_0,\dots, F_k$ be as in \eqref{e:F_k}.

Let $\caA$ be a Weyl chamber and for $0 \le i < k$, let $\caC_i(\caA)$
be the connected component of $V-\cup_{H_\a; F_i \subset H_\a}H$
containing $\caA$. We say $\caA$ is in {\it good position} with
respect to $w$ if for any $i$, the closure $\overline{\caC_i(\caA)}$
contains some regular point of $F_{i+1}$.

By \cite{he_nie_minimal_finite}*{Lemma 5.1}, there exists some Weyl chamber $\caA$ that is
in good position with respect to any given $w$. By definition, for any
$x \in W$, the Weyl chamber $x(\caA)$ is in good position with respect
to $x w x^{-1}$. In particular, for any conjugacy class of $\Wext$,
there exists an element $w$ such that the dominant chamber is in good
position with respect to $w$. In this case, $\Delta_i$ is the root system
of the standard Levi subgroup of $G$ associated to the subset
$\Pi_i:=\Pi \cap \Delta_i$ of simple roots and $W_i=W_{F_i}$ is a standard
parabolic subgroup of $W$ for any $i$. We denote by $W^{\Pi_i}$ (resp. ${}^{\Pi_i} W$) the
set of minimal length representatives in $W/W_i$ (resp. in $W_i \backslash W$). We write ${}^{\Pi_1} W^{\Pi_2}$ for ${}^{\Pi_1} W \cap W^{\Pi_2}$. By \cite{he_nie_minimal_finite}*{Proposition
2.2}, we have the following good factorization of $w$.

\begin{proposition}\label{good-fac} Let $w \in W \d \subset
\Wext$. Suppose that the dominant chamber is in good position with
respect to $w$. Then there are $x_i\in W$ $(1\le i\le k)$ so that 
$$
w=\d x_1 x_2 \cdots x_k,
$$ where for $1 \le i
\le k$, we have $x_i \in W_{i-1} \cap W^{\Pi_i}$.

Furthermore
$$
(\d x_1\cdots x_i)(\Pi_i)=\Pi_i\quad (1\le i\le k).
$$

\end{proposition}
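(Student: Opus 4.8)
The plan is to produce the prefix products $\delta x_1\cdots x_i$ all at once as minimal-length coset representatives, rather than building them by an inductive descent into the twisted Levi subgroups; this keeps the bookkeeping transparent and isolates the one real point, which is the stabilizer identity. Throughout I use that the dominant chamber is in good position, so that (as recorded just before the statement) each $\Delta_i$ is the standard Levi root system with simple roots $\Pi_i=\Pi\cap\Delta_i$ and $W_i$ is the corresponding standard parabolic; in particular $\Pi_k\subseteq\cdots\subseteq\Pi_1\subseteq\Pi_0=\Pi$ is a nested chain of subsets of $\Pi$.

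The single structural input I would record first is that $w$ preserves every $F_i$. Applying $w$ to the defining relation of $V(w,\theta)$ shows each eigenspace $V(w,\theta_j)$ is $w$-stable, hence so is $F_i=\sum_{j\le i}V(w,\theta_j)$. Because $w$ is an isometry permuting $\Delta$ that fixes $F_i$ setwise, it preserves $\Delta_i=\{\alpha\mid\langle\ch\alpha,F_i\rangle=0\}$; that is, $w(\Delta_i)=\Delta_i$ for all $i$.

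Since $w\in W\delta$ we have $\delta^{-1}w\in W$, so for each $i$ I let $z_i\in W^{\Pi_i}$ be the unique minimal-length representative of the coset $(\delta^{-1}w)W_i\in W/W_i$, and set $y_i=\delta z_i$ and $x_i=z_{i-1}^{-1}z_i$. The endpoints are forced: $z_0=1$ since $W_0=W$, and $z_k=\delta^{-1}w$ since $W_k=\{1\}$, so $y_0=\delta$, $y_k=w$, and telescoping gives $w=\delta x_1\cdots x_k$ with $x_1\cdots x_i=z_i$. That $x_i\in W_{i-1}$ is immediate from $W_i\subseteq W_{i-1}$, since $z_iW_{i-1}$ and $z_{i-1}W_{i-1}$ both equal $(\delta^{-1}w)W_{i-1}$, whence $z_{i-1}^{-1}z_i\in W_{i-1}$. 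That $x_i\in W^{\Pi_i}$ follows from the standard factorization of minimal coset representatives for the nested standard parabolics $W_i\subseteq W_{i-1}\subseteq W$: writing $z_i=z'z''$ with $z'\in W^{\Pi_{i-1}}$, $z''\in W_{i-1}\cap W^{\Pi_i}$ and lengths adding, uniqueness of minimal representatives forces $z'=z_{i-1}$, so $x_i=z''\in W_{i-1}\cap W^{\Pi_i}$.

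The remaining point, where the only care is needed, is the identity $y_i(\Pi_i)=\Pi_i$; I would prove it by combining positivity of the minimal representative with $w(\Delta_i)=\Delta_i$. Since $z_i\in W^{\Pi_i}$ maps $\Delta_i^+$ into $\Delta^+$ and $\delta$ preserves $\Delta^+$, the element $y_i=\delta z_i$ sends $\Delta_i^+$ into $\Delta^+$. On the other hand $y_i=w(y_i^{-1}w)^{-1}$ with $y_i^{-1}w\in W_i$, so $y_i(\Delta_i)=w(\Delta_i)=\Delta_i$; combining these gives $y_i(\Delta_i^+)\subseteq\Delta_i\cap\Delta^+=\Delta_i^+$, and since $y_i$ is a bijection of the finite set $\Delta_i^+$ this inclusion is an equality. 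Hence $y_i$ preserves the positive system of $\Delta_i$ and therefore its simple system, i.e. $y_i(\Pi_i)=\Pi_i$; as $y_i=\delta x_1\cdots x_i$, this is exactly the asserted identity. I expect the main conceptual obstacle to be resisting the temptation to reduce inductively to the twisted Levi $W_i\rtimes\langle y_i\rangle$ with its own flag and good-position verification; defining all the $z_i$ at once avoids that, leaving only the coset combinatorics and the short positivity argument above.
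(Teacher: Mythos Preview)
Your argument is correct. The paper itself does not prove this proposition; it merely cites \cite{he_nie_minimal_finite}*{Proposition 2.2}. What you have supplied is a clean, self-contained proof, so in that sense you have provided more than the paper does here.

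Your approach is also somewhat different in spirit from the inductive descent you allude to at the end: rather than peeling off one Levi at a time, you define all the prefix products $y_i=\delta z_i$ simultaneously as the minimal $W/W_i$-coset representatives of $\delta^{-1}w$, and then read off $x_i=z_{i-1}^{-1}z_i$. The two nontrivial verifications---that $x_i\in W_{i-1}\cap W^{\Pi_i}$ via the nested-parabolic factorization $W^{\Pi_i}=W^{\Pi_{i-1}}\cdot(W_{i-1}\cap W^{\Pi_i})$, and that $y_i(\Pi_i)=\Pi_i$ via $y_i(\Delta_i^+)\subseteq\Delta^+$ together with $y_i(\Delta_i)=w(\Delta_i)=\Delta_i$---are handled correctly. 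The observation that $w$ stabilizes each $F_i$ (hence each $\Delta_i$) is the only point where the eigenvalue filtration, as opposed to an arbitrary nested chain of standard parabolics, actually enters; you have isolated this cleanly. The payoff of doing everything at once is exactly what you say: no need to re-verify good position inside each successive Levi.
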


The following result is proved in \cite{he_nie_minimal_finite}*{Theorem 5.3}.

\begin{theorem}
\label{t:braid}
Suppose $w \in \Wext$, and the fundamental chamber is
in good position with $w$. Then we have the following equality in the
Braid monoid associated with $(W, S)$: $$\underline w^d=\underline
w_0^{d \theta_1/\pi} \underline w_1^{d (\theta_2-\theta_1)/\pi} \cdots
\underline w_{k-1}^{d (\theta_{k}-\theta_{k-1})/\pi},$$ where $d$ is
the order of $w$ in $\Wext$, $(\theta_1, \cdots,
\theta_k)$ is the sequence consisting of the elements in $\G_{w}$
and $w_i$ is the maximal element in the standard parabolic subgroup
$W_i$.
\end{theorem}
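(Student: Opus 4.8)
The statement is Theorem~5.3 of \cite{he_nie_minimal_finite}, so the plan is to reconstruct its proof from the tools already assembled. The starting point is the good factorization of Proposition~\ref{good-fac}: write $w=\delta x_1x_2\cdots x_k$ with $x_i\in W_{i-1}\cap W^{\Pi_i}$ and with each partial product $y_i:=\delta x_1\cdots x_i$ stabilizing $\Pi_i$. The first thing I would check is that this factorization is length-additive, i.e. $\ell(w)=\sum_i\ell(x_i)$; since each $x_i$ is a minimal-length representative in $W_{i-1}/W_i$ and the supports are nested along the chain $W_0\supseteq W_1\supseteq\cdots$, the product is reduced. Length-additivity is exactly what is needed to lift the factorization to the braid monoid, giving $\underline w=\underline\delta\,\underline{x_1}\cdots\underline{x_k}$ with no braid relations intervening. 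This reduces the problem to a computation of $\underline w^{\,d}$ purely inside ${}^{\delta}B^+$.

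The core of the argument I would organize as an induction on $k$, peeling off the outermost layer. Because $x_1\in W^{\Pi_1}$ and $x_2,\dots,x_k\in W_1$, we may write $w=y_1 v$ with $y_1=\delta x_1$ and $v=x_2\cdots x_k\in W_1$, where $y_1(\Pi_1)=\Pi_1$; thus $y_1$ normalizes the standard parabolic $(W_1,\Pi_1)$ and permutes its generators, so conjugation by $\underline{y_1}$ carries the braid generators of $W_1$ to themselves up to the twist. The base case is $k=1$, where $\Delta_1=\emptyset$, $W_1=\{1\}$, and $w$ acts on $V_\C$ with the single pair of eigenvalues $e^{\pm i\theta_1}$. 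Here the good-position hypothesis supplies a regular eigenvector, so $w$ is a $\zeta$-regular element for $\zeta=e^{i\theta_1}$ in the sense of Springer, and the required identity $\underline w^{\,d}=\underline{w_0}^{\,d\theta_1/\pi}$ is the full-twist relation for regular elements: writing $\theta_1=2\pi a/d$ makes the exponent $2a$, so this reads $\underline w^{\,d}=(\underline{w_0}^2)^a$, a power of the central full twist.

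For the inductive step I would restrict attention to the smaller Coxeter system $(W_1,\Pi_1)$ twisted by $y_1$. The element $w=y_1 v$ preserves the span $V_1=\langle\Delta_1\rangle$ and acts there with the remaining angles $\theta_2,\dots,\theta_k$, and the filtration $F_2\subset\cdots\subset F_k$ together with the parabolics $W_1\supseteq\cdots\supseteq W_k$ exhibits the good position of $w$ for this smaller system. Applying the inductive hypothesis inside the braid monoid of $(W_1,\Pi_1)$ produces the tail $\underline{w_1}^{\,d(\theta_2-\theta_1)/\pi}\cdots\underline{w_{k-1}}^{\,d(\theta_k-\theta_{k-1})/\pi}$, and it remains to account for the outermost layer, which should contribute exactly the factor $\underline{w_0}^{\,d\theta_1/\pi}$ coming from the regular rotation of $w$ by $\theta_1$ on the top layer.

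The main obstacle, and the step I would spend the most care on, is precisely this matching across layers. Two points must be reconciled. First, the angles seen by the smaller system are the genuine angles $\theta_2,\dots,\theta_k$, whereas the exponents in the target formula are the differences $\theta_i-\theta_{i-1}$; so the passage from the braid monoid of $W_1$ back to ${}^{\delta}B^+$ must absorb a shift by $\theta_1$, which is exactly where the top half-twist $\underline{w_0}$ and its nested sub--half-twist $\underline{w_1}$ interact. Second, one must verify the bookkeeping identity $\pi\,\ell(w)=\sum_{i=1}^k(\theta_i-\theta_{i-1})\,\ell(w_{i-1})$, which guarantees that the two sides have equal length, and, more substantively, that the nested longest elements $\underline{w_0},\underline{w_1},\dots$ (each a half-twist lying in the braid monoid of the preceding parabolic) commute past one another in the required way, so that no cross terms appear. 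Establishing the top-layer full-twist relation in the twisted setting and controlling these commutations between nested half-twists is the technical heart of the argument; once it is in place, the induction closes and yields the stated equality in the braid monoid.
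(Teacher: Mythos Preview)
The paper does not prove Theorem~\ref{t:braid}: it is quoted as Theorem~5.3 of \cite{he_nie_minimal_finite}, and no argument is supplied here. So there is no in-paper proof to compare your sketch against beyond the citation.

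That said, your outline matches the architecture of the proof in \cite{he_nie_minimal_finite}: lift the length-additive good factorization of Proposition~\ref{good-fac} to $B^+$, then induct down the chain $W_0\supseteq W_1\supseteq\cdots$ by restricting to $(W_1,\Pi_1)$ twisted by $y_1=\delta x_1$. You have also correctly located the crux, namely reconciling the absolute angles $\theta_2,\dots,\theta_k$ produced by the inner induction with the differences $\theta_i-\theta_{i-1}$ appearing in the target, and controlling how the nested half-twists $\underline{w_i}$ interact. But what you have written stops short of a proof: the step you flag as ``the technical heart'' is essentially the entire content of the theorem, and you have not executed it. In \cite{he_nie_minimal_finite} this step is carried out by a geometric root-hyperplane count (their Lemma~2.1, whose method reappears in this paper in the proof of Lemma~\ref{tech}); that count shows directly that each reflecting hyperplane is crossed the correct number of times as one iterates $w$, and this is what produces the exponents $d(\theta_i-\theta_{i-1})/\pi$. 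Your base case also deserves caution: when $k=1$ the good-position hypothesis is vacuous (every chamber closure contains a regular point of $F_1=V$), so it does not by itself hand you Springer regularity of $w$, and the single-angle identity $\underline{w}^{\,d}=\underline{w_0}^{\,d\theta_1/\pi}$ still requires justification rather than an appeal to a ``full-twist relation for regular elements''.
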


\sec{The regular case}
\label{s:regular}

We first study the regular elliptic elements. 
A similar discussion is in \cite{reeder_torsion}*{Section 2.6}.

Following
\cite{springer_regular} we say an element $w \in {}^{\delta} W$ is {\it regular} if it has a
regular eigenvector. We say $w$ is {\it $d$-regular} if the corresponding eigenvalue has
order $d$ ($d$ turns out to be independent of the choice of regular eigenvector).
Following \cite{rgly} we say
element $w$ is {\it $\mathbb Z$-regular} if $\langle w\rangle$ acts freely
on $\Delta$. It is proved in \cite[Prop. 1]{rgly} that $\mathbb Z$-regularity implies regularity. The converse does not hold in general. 

\begin{proposition}\label{regular} Suppose $w \in W\d$ is elliptic and
regular. Let $\theta \in \G_w$ such that $V(w, \theta)$ contains a
regular vector of $V$. Then $[n_w]=[\exp(\sqrt{-1} \theta\rho^\vee)\delta]$.
\end{proposition}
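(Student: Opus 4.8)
The plan is to show that the specific element $\sigma:=\exp(\sqrt{-1}\,\theta\rho^\vee)\delta\in\Gext$ is itself $\Gext$-conjugate to a lift of $w$. Since $w$ is elliptic, Lemma \ref{l:conjugate} then forces $[\sigma]=[n_w]$, which is the assertion. Thus everything reduces to producing a maximal torus $S\subset G$ with $\sigma\in N_{\Gext}(S)$ whose image $\bar w:=p_S(\sigma)\in N_{\Gext}(S)/S$ is regular with $e^{i\theta}$ as a regular eigenvalue. Granting this, Springer's theory of regular elements \cite{springer_regular} --- regular elements of a (possibly twisted) reflection coset that share a given regular eigenvalue form a single conjugacy class --- identifies $\bar w$ with $w$; as $w$ is elliptic, all its lifts are conjugate, so the lift $\sigma$ of $\bar w$ is conjugate to $n_w$, and we are done.

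To analyze $\sigma$ I would pass to $\vartheta:=\Ad(\sigma)$, a finite-order automorphism of $\g$. Because $\delta$ preserves the pinning and hence fixes $\rho^\vee$, one computes that $\vartheta$ preserves $\t$, acting there as $\delta$, and carries $\g_\alpha$ to $\g_{\delta\alpha}$ by the scalar $e^{\sqrt{-1}\,\theta\langle\alpha,\rho^\vee\rangle}=e^{\sqrt{-1}\,\theta\,\mathrm{ht}(\alpha)}$; thus $\vartheta$ is exactly the grading automorphism attached to $\rho^\vee$, twisted by $\delta$. The key step is then to exhibit a regular semisimple eigenvector $Y\in\g$ for $\vartheta$ with eigenvalue $e^{i\theta}$: following Kostant's treatment of the Coxeter case, $Y$ is taken to be a cyclic element supported on the root spaces attached to the (twisted) affine simple roots, whose contributions all land in the $e^{i\theta}$-eigenspace precisely because of the grading degrees. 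Setting $S$ to be the maximal torus with $\Lie S=\mathfrak{z}_\g(Y)$, the $\vartheta$-stability of $\mathfrak{z}_\g(Y)$ gives $\sigma\in N_{\Gext}(S)$, and the relation $\bar w\,Y=e^{i\theta}Y$ with $Y$ regular exhibits $\bar w$ as regular with regular eigenvalue $e^{i\theta}$.

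The main obstacle is exactly this construction of $Y$: one must verify that the cyclic element is regular \emph{semisimple}, not merely a nonzero eigenvector of $\vartheta$, and this is where the hypothesis that $e^{i\theta}$ is a regular eigenvalue (Springer's regularity numerology for the order $d$ of the eigenvalue) is indispensable. For a non-regular order the analogous element degenerates --- for instance it becomes nilpotent, supported only on positive root spaces --- and no torus $S$ with the required property exists, consistent with the fact that $\exp(\sqrt{-1}\,\theta\rho^\vee)$ need not be regular semisimple in those cases.

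As an independent check on the normalization I would run the good-element machinery of Section \ref{s:digression}. Springer's description of the eigenvalues places the regular eigenvector at the smallest angle $\theta=\theta_1$, so regularity forces $\Delta_i=\emptyset$ for all $i\ge 1$ (the regular vector already lies in $F_1$), and Theorem \ref{t:braid} collapses to $\underline w^{\,d}=\underline{w_0}^{\,d\theta/\pi}$ with $w_0$ the longest element. Applying the canonical Tits lift of the braid monoid into $\Gext$, together with the identity $\underline{w_0}^{\,2}\mapsto\exp(2\pi\sqrt{-1}\,\rho^\vee)$, yields $n_w^{\,d}=\big(\exp(\sqrt{-1}\,\theta\rho^\vee)\delta\big)^{d}$, confirming that the two classes have equal $d$-th powers. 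The content of the Proposition is precisely the promotion of this equality of powers to an equality of classes, which is supplied by the regular-element argument above.
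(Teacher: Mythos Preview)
Your overall strategy coincides with the paper's: both arguments conjugate $\sigma=\exp(\sqrt{-1}\theta\rho^\vee)\delta$ into the normalizer of a maximal torus, verify that the resulting element of $W\delta$ is regular with regular eigenvalue $e^{i\theta}$, invoke Springer's uniqueness of such elements, and finish with Lemma~\ref{l:conjugate}. The difference lies in how the required torus---equivalently, your regular semisimple eigenvector $Y$---is produced.

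Here there is a genuine gap. Your proposal to take $Y$ as a ``cyclic element supported on the root spaces attached to the (twisted) affine simple roots'' works when $e^{i\theta}$ is a primitive $h$-th root of unity (Kostant's theorem), but for a general regular eigenvalue the affine simple root vectors do \emph{not} all lie in the $e^{i\theta}$-eigenspace of $\vartheta$: the lowest root has height $-(h-1)$, which is $\equiv 1$ only modulo $h$. There is no comparably explicit candidate in general, and what is needed is an abstract existence statement. The paper supplies it by observing that the $\Z/d\Z$-grading of $\g^{\delta^d}$ induced by $\vartheta$ is $N$-regular in Panyushev's sense \cite{panyushev}, taking a Cartan subspace $\fc'$ of the degree-one piece, and letting $\fs$ be the centralizer of $\fc'$ in $\g$. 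Panyushev's Theorem~3.3(v) gives $\dim\fc'=a(d,\delta)$, and Springer's Theorem~6.4 then shows that the image of (a conjugate of) $\sigma$ in $W\delta$ is $\zeta$-regular. Your $Y$ should be a generic element of this $\fc'$; its existence and regularity are the content of Panyushev's result rather than an explicit Kostant-type formula.

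Your closing ``independent check'' also contains an error: the regular angle need not be the smallest $\theta_1$, so $\Delta_1$ need not vanish. For example, in type $E_6$ with $\delta$ the nontrivial diagram automorphism there is an elliptic $3$-regular element $w\in W\delta$ of order $6$ with characteristic polynomial $\Phi_6\Phi_3^2$; here $\Gamma_w=\{\pi/3,\,2\pi/3\}$, the regular eigenvector lies in $V(w,2\pi/3)=V(w,\theta_2)$, and $\Delta_1$ is of type $D_4$. The braid identity of Theorem~\ref{t:braid} then has two factors, not one, and the check as written does not go through.
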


\begin{remark}
	The case where $w$ is $\mathbb Z$-regular is proved in \cite{rgly}*{Prop. 12}.
\end{remark}

\begin{proof} Let $\zeta=\exp(\sqrt{-1} \theta)$ and let
$\t=\delta \exp(\sqrt{-1} \theta \rho^\vee)=\delta \rho^\vee(\zeta)$. Let $d,m$ be
the orders of $\zeta \in \mathbb C^\times$ and $\t$ respectively.
Let $\xi$ be a primitive $m$-th root of unity such that
$\zeta=\xi^{m/d}$. We set $\iota=\tau^d=\delta^d$. Let $\fg, \ft$ be
the Lie algebras of $G$ and $T$ respectively. We denote by
$\fg^\iota$ and $\ft^\iota$ the subalgebras of $\iota$-fixed points
of $\fg$ and $\ft$ respectively. Then $\fg^\iota$ is also a
semisimple Lie algebra and $\ft^\iota$ is a Cartan subalgebra of
$\fg^\iota$.

	The automorphism $\t$ gives a periodic grading: $\fg=\oplus_{i
\in \mathbb Z/m\mathbb Z} \fg_i$, where $\fg_i$ is the
$\xi^i$-eigenspace of $\fg$ for $\t$. Then we have $\fg^\iota =
\oplus_{i \in \mathbb Z / d\mathbb Z} \fg^\iota_i$, where
$\fg^\iota_i=\fg_{im/d}$ is the $\zeta^i$-eigenspace for $\t$. This is
an $N$-regular periodic grading of ${\fg}^{\iota}$, see
\cite{panyushev}*{Section 3}.
	
Let $\fc' \subseteq \fg^\iota_1$ be a Cartan subspace. By the
construction in \cite{rgly}*{Subsection 3.1}, there is a $\t$-stable
Cartan subalgebra $\fs'$ of $\fg^\iota$ containing $\fc'$. Let $\fs$
be the centralizer of $\fs'$ in $\fg$. As $\fs'$ is conjugate to
$\ft^\iota$, $\fs$ is also a Cartan subalgebra of $\fg$ fixed by
$\t$. Let $g \in G$ such that $\ft=\Ad(g) \fs$, and set $\varepsilon=g
\t g\i$. Then $\varepsilon \in G \d$ fixes $\ft$ and lies in $N \d$,
where $N$ is the normalizer of $T$ in $G$. Let $\ft(n_w, \z)$ and
$\ft(\varepsilon, \z)$ be the $\z$-eigenspaces of $\ft$ for $n_w$ and
$\varepsilon$ respectively. Thanks to
\cite{springer_regular}*{Theorem 6.4 (iv)} and
the ellipticity of $w$, to show $n_w$ and $\varepsilon$ are conjugate,
it suffices to show $$\dim \ft(n_w, \z)= \dim \ft(\varepsilon, \z).$$
Notice that $\ft(\varepsilon, \z)=\Ad(g) \fc'$.
	
	Let $v \in V(w, \theta)$ be a regular point. We may assume
that $v$ is (strictly) dominant. Since $w^d(v)=v$ and $v$ is strictly
dominant, one has $w^d=\d^d=\iota$ and hence $w \in W^\iota \d$, where
$W^\iota$ is the subgroup of $\iota$-fixed points of $W$. Notice that
$\ft(n_w, \z)=\ft^\iota(n_w, \z)$ and that $W^\iota$ is the Weyl group
of $\ft^\iota$ in $\fg^\iota$, where $\ft^\iota(n_w, \z)$ is the
$\z$-eigenspace of $\ft^\iota$ for $n_w$. Then
\cite{springer_regular}*{Theorem 6.4 (ii)} says $$\dim \ft(n_w, \z) = \dim \ft^\iota(n_w, \z)=
a(d, \d),$$ where $a(d, \d)$ is defined in
\cite{springer_regular}*{Section 6} with
respect to $W^\iota$ and $\d$.
	
	On the other hand, applying \cite{panyushev}*{Theorem 3.3 (v)} to the
$N$-regular periodic grading $\fg^\iota = \oplus_{i \in \mathbb Z /
d\mathbb Z} \fg^\iota_i$, one deduces that $$\dim \ft(\varepsilon, \z)
= \dim \fc' = a(d, \d).$$ The proof is finished.
\end{proof}

\begin{corollary}
Suppose $w\in W\delta$ is elliptic and $d$-regular. Then
$$
[n_w]=[\exp(2\pi \sqrt{-1}\ch\rho/d)\delta].
$$
\end{corollary}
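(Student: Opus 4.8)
The plan is to combine Proposition \ref{regular} with the rationality result Proposition \ref{p:g^k}, exactly as in the classical untwisted computation of the semisimple class attached to a regular element. Since $w$ is $d$-regular, the eigenvalue $e^{\sqrt{-1}\theta}$ produced by Proposition \ref{regular} is a primitive $d$-th root of unity; write $e^{\sqrt{-1}\theta}=e^{2\pi\sqrt{-1}\,a/d}$ with $\gcd(a,d)=1$ and $0<a\le d/2$ (here $d\ge 2$, since an elliptic element has no nonzero fixed vector and hence no regular eigenvector of eigenvalue $1$). Proposition \ref{regular} then gives $[n_w]=[\exp(2\pi\sqrt{-1}\,a\ch\rho/d)\delta]$, so the entire content of the Corollary is the assertion that this class is unchanged when $a$ is replaced by $1$; equivalently, that $[n_w]$ depends only on $d$ and not on the particular regular eigenvalue.

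To normalize $a$ to $1$ I would pass to a suitable power of $w$. Let $m$ be the order of $\delta$ and $D$ the order of $w$; note that $w^d=\delta^d$ (established inside the proof of Proposition \ref{regular}) forces $w^{\mathrm{lcm}(d,m)}=1$, so $D\mid\mathrm{lcm}(d,m)$. Suppose we can choose an integer $k$ with $k\equiv a^{-1}\pmod d$ and $k\equiv 1\pmod m$. Then $k$ is prime to $\mathrm{lcm}(d,m)$, hence to $D$, and $k\equiv 1\pmod m$ guarantees $w^k\in W\delta$; Proposition \ref{p:g^k} therefore applies and shows $w^k$ is conjugate to $w$ in $\Wext$. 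Consequently $w^k$ is again elliptic, and its regular eigenvector (the same one as for $w$) now has eigenvalue $e^{2\pi\sqrt{-1}\,ak/d}=e^{2\pi\sqrt{-1}/d}$, so $w^k$ is elliptic and $d$-regular with the angle $\theta'=2\pi/d\in(0,\pi]$ realized by a regular vector. Applying Proposition \ref{regular} to $w^k$ yields $[n_{w^k}]=[\exp(2\pi\sqrt{-1}\,\ch\rho/d)\delta]$. Finally, since $w^k$ and $w$ are $\Wext$-conjugate, Lemma \ref{l:conjugate} (well-definedness of $\Psi$) gives $[n_w]=[n_{w^k}]$, and the Corollary follows.

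The one nontrivial point, and the step I expect to be the main obstacle, is the existence of the exponent $k$, i.e. the solvability of the two congruences $k\equiv a^{-1}\pmod d$ and $k\equiv 1\pmod m$. By the Chinese Remainder Theorem these are compatible precisely when $a\equiv 1\pmod{\gcd(d,m)}$. In the untwisted case $m=1$ this is vacuous and the argument reproduces the classical result, so the difficulty is genuinely twisted. To establish the congruence I would again use $w^d=\delta^d$: it shows that $\delta^d$, and hence $\delta^{\gcd(d,m)}$, fixes the regular eigenvector $v$, confining $v$ to the fixed space of $\delta^{\gcd(d,m)}$. Feeding this into Springer's description of the eigenvalues of a regular element of the coset $W\delta$, which expresses them in terms of the degrees of $W$ twisted by the $\delta$-eigenvalues on the basic invariants, should pin down the residue of $a$ modulo $\gcd(d,m)$ and yield $a\equiv 1\pmod{\gcd(d,m)}$. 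Verifying this congruence uniformly, rather than case-by-case, is where the real work lies; once it is in hand the remainder is the formal power-and-conjugacy argument above.
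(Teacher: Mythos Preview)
Your detour through Proposition~\ref{p:g^k} is unnecessary, and the congruence obstacle you flag is real and can fail. The paper's argument is a single sentence: apply Proposition~\ref{regular} directly with $\theta=2\pi/d$, after noting that $V(w,2\pi/d)$ already contains a regular vector. Rather than starting from an arbitrary regular eigenvalue $e^{2\pi\sqrt{-1}\,a/d}$ and then struggling to normalize $a$ to $1$, one simply observes that $a=1$ is available from the outset.

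The reason is a Galois argument. The element $w\in W\delta$ acts on $V$ by a matrix with rational entries (both $W$ and $\delta$ act by integer matrices in the simple-root basis), and the reflection hyperplanes are defined over~$\Q$. Hence the $\zeta$-eigenspace of $w$ contains a regular vector if and only if the $\sigma(\zeta)$-eigenspace does, for any automorphism $\sigma$ of $\Q(\zeta_d)/\Q$. Since $w$ is $d$-regular, some primitive $d$-th root has a regular eigenvector; therefore $e^{2\pi\sqrt{-1}/d}$ does too, and $V(w,2\pi/d)$ contains a regular vector. Proposition~\ref{regular} with $\theta=2\pi/d$ then gives the Corollary immediately.

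By contrast, your congruence $a\equiv 1\pmod{\gcd(d,m)}$ need not hold for the particular $a$ you start with. In type ${}^3D_4$ the twisted Coxeter element is elliptic and $12$-regular with $m=3$; its characteristic polynomial on the four-dimensional $V$ is $\Phi_{12}$, so the angles in $(0,\pi]$ give $a\in\{1,5\}$, and by the Galois argument both eigenspaces contain regular vectors. If you happen to pick $a=5$ then $5\not\equiv 1\pmod 3$ and no admissible $k$ exists. The very observation that rescues you---that $a=1$ is always among the valid choices---is exactly the paper's proof, and once you have it the passage to $w^k$ is superfluous.
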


This follows from the fact that $V(w,2\pi/d)$ contains a regular vector.

\sec{The General Case}
\label{s:general}

We prove Theorem \ref{t:main}. We first collect some facts needed for the proof.

Suppose $w\in \Wext$ and the dominant chamber $\caC$ is in good position with
$w$ (see Section \ref{s:digression}). Let $w=\d x_1 x_2 \cdots x_k$ be the good factorization of $w$ as
in Proposition \ref{good-fac}, and define $\Delta_i,W_i,\ch\lambda_i$
$(0\le i\le k)$ as in \eqref{e:algorithm}(e-i).
Also define $\delta_0=\delta$ and 
$$
\delta_i=\delta x_1\dots x_i\quad(1\le i\le k).
$$
Recall (Proposition \ref{good-fac}) $\delta_i(\Pi_i)=\Pi_i$ $(0\le i\le k)$.

\begin{lemma} \label{dom}
For all $0\le i\le k$ we have
$\delta_i(\ch\lambda_i)=\ch\lambda_i$. In particular $\delta(\ch\lambda_0)=\ch\lambda_0$.
\end{lemma}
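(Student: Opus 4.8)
The plan is to establish the stronger statement $\delta_i(\ch\lambda_i)=\ch\lambda_i$ for every $0\le i\le k$ by downward induction on $i$, and then obtain the displayed conclusion by specializing to $i=0$, where $\delta_0=\delta$ by definition. The base case $i=k$ is immediate, since $\ch\lambda_k=0$ is fixed by any linear map. Throughout I will lean on the one external input already in hand, namely that $\delta_j(\Pi_j)=\Pi_j$ for all $j$ (Proposition \ref{good-fac}), and on the factorization relation $\delta_{j+1}=\delta_j x_{j+1}$ with $x_{j+1}\in W_j\cap W^{\Pi_{j+1}}\subseteq W_j$.

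For the inductive step I would assume $\delta_{j+1}(\ch\lambda_{j+1})=\ch\lambda_{j+1}$ and apply $\delta_j$ to the defining recursion \eqref{def-1},
$$
\ch\lambda_j=\frac{d(\theta_{j+1}-\theta_j)}{2\pi}\,\ch\rho_j+\overline{\ch\lambda_{j+1}},
$$
so that it suffices to show $\delta_j$ fixes each summand separately. The first summand is easy: since $\delta_j$ stabilizes $\Pi_j$ it permutes the positive system $\Delta_j^+$, and as a composite of Weyl elements with the pinned automorphism it respects the root--coroot correspondence; hence it permutes $\{\ch\alpha\mid\alpha\in\Delta_j^+\}$ and fixes $\ch\rho_j$.

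The heart of the argument is the second summand, $\delta_j(\overline{\ch\lambda_{j+1}})=\overline{\ch\lambda_{j+1}}$. Here I would use three consequences of $\delta_j(\Pi_j)=\Pi_j$: that $\delta_j$ normalizes $W_j=W(\Delta_j)$, that it preserves the dominance cone for $\Delta_j^+$, and the standard fact that each $W_j$-orbit contains a unique $\Delta_j^+$-dominant element. Writing $O$ for the $W_j$-orbit of $\ch\lambda_{j+1}$, the first task is to show $\delta_j(O)=O$. This is where the induction hypothesis feeds in through the factorization: from $\delta_{j+1}=\delta_j x_{j+1}$ the equation $\delta_{j+1}(\ch\lambda_{j+1})=\ch\lambda_{j+1}$ rearranges to $\delta_j^{-1}(\ch\lambda_{j+1})=x_{j+1}(\ch\lambda_{j+1})\in O$, since $x_{j+1}\in W_j$. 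Because $\delta_j$ normalizes $W_j$ it carries $W_j$-orbits to $W_j$-orbits, and sending one element of $O$ back into $O$ forces $\delta_j(O)=O$. Consequently $\delta_j(\overline{\ch\lambda_{j+1}})$ lies in $O$ and, by preservation of dominance, is again $\Delta_j^+$-dominant; uniqueness of the dominant element in $O$ then yields $\delta_j(\overline{\ch\lambda_{j+1}})=\overline{\ch\lambda_{j+1}}$. Adding the two fixed summands gives $\delta_j(\ch\lambda_j)=\ch\lambda_j$, closing the induction.

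I expect the only real obstacle to be bookkeeping rather than computation: the induction hypothesis is a statement about $\delta_{j+1}$, whereas the conclusion and all of the normalizer and dominance properties are available only for $\delta_j$, since it is $\delta_j$ (not $\delta_{j+1}$) that is known to stabilize $\Pi_j$. The identity $\delta_{j+1}=\delta_j x_{j+1}$ with $x_{j+1}\in W_j$ is precisely the bridge that converts the hypothesis about $\delta_{j+1}$ into the needed relation $\delta_j^{-1}(\ch\lambda_{j+1})\in O$, and getting this translation right is the crux of the proof.
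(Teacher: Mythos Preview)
Your proof is correct and follows essentially the same approach as the paper: downward induction on $i$, using $\delta_j(\Pi_j)=\Pi_j$ to get $\delta_j(\ch\rho_j)=\ch\rho_j$, and then combining the relation $\delta_{j+1}=\delta_j x_{j+1}$ with $x_{j+1}\in W_j$ and the uniqueness of the $\Delta_j^+$-dominant element in a $W_j$-orbit to conclude $\delta_j(\overline{\ch\lambda_{j+1}})=\overline{\ch\lambda_{j+1}}$. The only cosmetic difference is that the paper introduces $y_i\in W_i$ with $\overline{\ch\lambda_{i+1}}=y_i(\ch\lambda_{i+1})$ and observes directly that $y_i\delta_i x_{i+1}y_i^{-1}\in W_i\delta_i$ fixes $\overline{\ch\lambda_{i+1}}$, whereas you phrase the same step as ``$\delta_j$ preserves the $W_j$-orbit $O$ and the dominance cone, hence fixes the unique dominant element of $O$''; these are equivalent formulations of the same argument.
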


\begin{proof}
We proceed by downward induction on $i$. If $i=k$ then $\ch\lambda_i=0$ so there is nothing to prove.
Assume
  $\d_{i+1}(\l_{i+1}^\vee) = \l_{i+1}^\vee$ for some
  $0 \le i \le k-1$. We prove $\d_i(\l_i^\vee)=\l_i^\vee$. Since
  $\d_{i}(\Delta_i^+)=\Delta_i^+$, we
  have $$\d_{i}(\rho_i^\vee)=\rho_i^\vee.$$ Let $y_i \in W_i$ such
  that $\overline{\l_{i+1}^\vee} =
  y_i(\l_{i+1}^\vee)$. Then
  $$\overline{\l_{i+1}^\vee}=y_i \d_{i+1} y_i^{-1}
  (\overline{\l_{i+1}^\vee}) =y_i \d_i x_{i+1} y_i^{-1}
  (\overline{\l_{i+1}^\vee}).$$ Note that
  $y_i \d_i x_{i+1} y_i^{-1} \in W_i\delta_i$ and that
  $\overline{\l_{i+1}^\vee}$ is dominant for $\Delta_i^+$. Therefore,
  $\d_i$ fixes $\overline{\l_{i+1}^\vee}$ and by \eqref{e:algorithm}(i) also fixes
  $\l_i^\vee$ as desired.
\end{proof}

\begin{lemma} \label{average}
  Suppose $x \in W \d$ has order $d$.  Then  for $v\in V$, $n(x)\exp(2 \pi \sqrt{-1} v)$ is conjugate to
  $n(x) \exp(\frac{2 \pi \sqrt{-1}}{d}\sum_{i=0}^{d-1} x^i(v))$ by an element in $T$.
\end{lemma}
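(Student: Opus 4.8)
The plan is to exhibit the conjugating element of $T$ explicitly as $t=\exp(2\pi\sqrt{-1}\,u)$ for a suitable $u$ in the rational coweight space, and then to solve a single linear equation for $u$. First I would record how conjugation by such a $t$ acts on the element in question. Writing $g=n(x)$ and using $g\,s\,g\inv=x(s)$ for $s\in T$ (equivalently $g\inv s g=x\inv(s)$), and noting that $t$ commutes with $\exp(2\pi\sqrt{-1}\,v)\in T$, a direct computation gives
$$
t\,\bigl(g\,\exp(2\pi\sqrt{-1}\,v)\bigr)\,t\inv
= t\,x(t)\inv\,g\,\exp(2\pi\sqrt{-1}\,v)
= g\,\exp\!\bigl(2\pi\sqrt{-1}\,(v+(x\inv-1)u)\bigr),
$$
where in the last step the factor $t\,x(t)\inv\in T$ is moved across $g$, becoming $x\inv\!\bigl(t\,x(t)\inv\bigr)=x\inv(t)\,t\inv=\exp(2\pi\sqrt{-1}(x\inv-1)u)$. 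Thus conjugating by $t$ simply translates the exponent $v$ by $(x\inv-1)u$.

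It therefore suffices to choose $u$ so that $v+(x\inv-1)u$ equals $\bar v:=\tfrac1d\sum_{i=0}^{d-1}x^i(v)$, i.e. to solve
$$
(x\inv-1)u=\bar v-v .
$$
Here I would invoke that $x\in W\d$ has finite order, so that on the rational coweight space $x$ is a semisimple operator (its minimal polynomial divides $t^d-1$, which is separable in characteristic zero), and $P:=\tfrac1d\sum_{i=0}^{d-1}x^i$ is exactly the projection onto the fixed subspace $\ker(1-x)$ along the $x$-stable complement $\operatorname{im}(1-x)$. Consequently $\bar v-v=(P-1)v=-(1-P)v\in\operatorname{im}(1-x)$, and since $x\inv-1=x\inv(1-x)$ gives $\operatorname{im}(x\inv-1)=\operatorname{im}(1-x)$, the equation $(x\inv-1)u=\bar v-v$ has a solution $u$ in the rational coweight space. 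Taking $t=\exp(2\pi\sqrt{-1}\,u)$ for this $u$ then yields the asserted $T$-conjugacy.

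The only genuine content is the linear-algebra step of the second paragraph — that the difference between $v$ and its $x$-average lies in the image of $x\inv-1$; everything else is the bookkeeping of moving $T$-elements across the lift $n(x)$. I expect this to be the main (and only mild) obstacle, and it comes down to two checks that both follow from semisimplicity of the finite-order operator $x$: that $\tfrac1d\sum_{i=0}^{d-1}x^i$ is the fixed-space projection even when the order of $x$ as a linear operator is a proper divisor of $d$, and that solving $(x\inv-1)u=\bar v-v$ \emph{exactly} over $\Q$ is enough (it is stronger than the congruence modulo $X_*(T)$ that the identity in $T$ would require).
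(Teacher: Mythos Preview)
Your proof is correct and follows essentially the same approach as the paper: conjugate by $t=\exp(2\pi\sqrt{-1}\,u)\in T$ and reduce to the linear equation $(x\inv-1)u=\bar v-v$ (the paper conjugates by $t\inv$, giving the equivalent equation $(1-x\inv)u=\bar v-v$). The only difference is that the paper writes down the explicit solution $u=\tfrac1d\sum_{k=0}^{d-1}k\,x^{-k}(v)$ and leaves the one-line verification to the reader, while you argue existence abstractly via the projection $P=\tfrac1d\sum_i x^i$; both are fine, and your existence argument is sound.
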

\begin{proof}
Let
$$
t=\exp(\frac{2\pi \sqrt{-1} }{d}\sum_{k=0}^{d-1} k x^{-k}(v))\in T.
$$
Then $t^{-1} n(x)\exp(2 \pi \sqrt{-1} v) t = n(x) \exp(2\pi \sqrt{-1}\frac{1}{d}\sum_{i=0}^{d-1} x^i(v))$ as desired.
\end{proof}

Consider the element $\delta_1=\delta x_1$. We may view this as an
automorphism of $W_1$ and the corresponding Levi subgroup $L_1$.  We
will need to conjugate $n(\delta_1)$ to an element of $T\delta$.
Note that $\d_1$ is not necessarily  an elliptic element of ${}^\d W$, so its liftings are not all conjugate in $G$.
For the purpose of our
argument, we fix a set of lifts $n(x) \in N(T)$ for $x \in {}^\d W$
such that

(1) $n(s_\a)^2=\exp(\pi \sqrt{-1} \a^\vee)$ for $\a \in \Pi$;

(2) $n(x x') = n(x) n(x')$ if $\ell(x x') = \ell(x) + \ell(x')$.

We need a formula for the cycle defined by these lifts.
\begin{lemma} \label{factor}{\cite{ls}*{Lemma 2.1A}}
  Given $x,y\in \Wext$ let
  $$
  S=\Delta^+\cap y\inv(\Delta^-)\cap y\inv x\inv(\Delta^+)
$$
and
$$
\ch\gamma=\sum_{\beta\in S}\ch\beta.
$$
Then
$$
n(x)n(y)=n(xy)\exp(\pi \sqrt{-1}\gamma^\vee). 
$$
\end{lemma}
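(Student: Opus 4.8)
The plan is to prove the identity by induction on $\ell(y)$, reducing everything to the two defining properties (1) and (2) of the section $x\mapsto n(x)$. Since $n(x)n(y)$ and $n(xy)$ are both lifts of $xy$, their ratio $c(x,y):=n(x)n(y)n(xy)\inv$ lies in $T$, and it suffices to show $c(x,y)=\exp(\pi\sqrt{-1}\ch\gamma)$ with $\ch\gamma=\sum_{\beta\in S}\ch\beta$. For the base case $\ell(y)=0$ the element $y$ permutes $\Delta^+$, so $y\inv(\Delta^-)=\Delta^-$ and hence $S\subseteq\Delta^+\cap\Delta^-=\emptyset$; moreover $\ell(xy)=\ell(x)=\ell(x)+\ell(y)$, so property (2) gives $n(x)n(y)=n(xy)$ and $c(x,y)=1=\exp(0)$, as required.

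For the inductive step with $\ell(y)>0$ I would choose a simple root $\alpha\in\Pi$ with $y(\alpha)\in\Delta^-$ and set $y'=ys_\alpha$, so that $\ell(y')=\ell(y)-1$ and $n(y)=n(y')n(s_\alpha)$ by (2). Applying the inductive hypothesis to $(x,y')$ gives $n(x)n(y')=n(xy')\exp(\pi\sqrt{-1}\ch{\gamma'})$, where $\ch{\gamma'}=\sum_{\beta\in S'}\ch\beta$ and $S'$ is the set attached to $(x,y')$. Commuting the torus factor past $n(s_\alpha)$ via $n(s_\alpha)\inv\,t\,n(s_\alpha)=s_\alpha(t)$ yields
$$
n(x)n(y)=n(xy')\,n(s_\alpha)\,\exp(\pi\sqrt{-1}\,s_\alpha(\ch{\gamma'})).
$$
It then remains to evaluate $n(xy')n(s_\alpha)$, and here I distinguish two cases according to the sign of $xy(\alpha)$ (noting $xy'(\alpha)=-xy(\alpha)$). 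If $xy(\alpha)\in\Delta^-$ then $\ell(xy)=\ell(xy')+1$ and (2) gives $n(xy')n(s_\alpha)=n(xy)$, so $\ch\gamma=s_\alpha(\ch{\gamma'})$. If $xy(\alpha)\in\Delta^+$ then $xy'=(xy)s_\alpha$ with $\ell(xy')=\ell(xy)+1$, so $n(xy')=n(xy)n(s_\alpha)$ and property (1) gives $n(xy')n(s_\alpha)=n(xy)n(s_\alpha)^2=n(xy)\exp(\pi\sqrt{-1}\ch\alpha)$; thus $\ch\gamma=\ch\alpha+s_\alpha(\ch{\gamma'})$.

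The final and most delicate step is the combinatorial bookkeeping showing that these two outputs agree with $\sum_{\beta\in S}\ch\beta$. Using $y\inv=s_\alpha y'^{-1}$ one computes $s_\alpha(S')=s_\alpha(\Delta^+)\cap y\inv(\Delta^-)\cap y\inv x\inv(\Delta^+)$. Since $s_\alpha(\Delta^+)=(\Delta^+\setminus\{\alpha\})\cup\{-\alpha\}$, and since $y(\alpha)\in\Delta^-$ forces $-\alpha\notin y\inv(\Delta^-)$, we get $s_\alpha(S')\subseteq\Delta^+\setminus\{\alpha\}$, with $\beta\in S\iff\beta\in s_\alpha(S')$ for every $\beta\neq\alpha$; the only discrepancy sits at $\alpha$, which lies in $S$ exactly when $xy(\alpha)\in\Delta^+$ and never lies in $s_\alpha(S')$. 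Hence $S=s_\alpha(S')$ in the first case and $S=s_\alpha(S')\sqcup\{\alpha\}$ in the second. Finally, the compatibility $s_\alpha(\ch\beta)=(s_\alpha\beta)^\vee$ gives $\sum_{\beta\in s_\alpha(S')}\ch\beta=s_\alpha(\ch{\gamma'})$, so $\sum_{\beta\in S}\ch\beta$ equals $s_\alpha(\ch{\gamma'})$ in the first case and $\ch\alpha+s_\alpha(\ch{\gamma'})$ in the second, matching the two formulas above and completing the induction. I expect the main obstacle to be controlling this single root $\pm\alpha$ correctly across the two length cases; the rest is a routine propagation of properties (1) and (2).
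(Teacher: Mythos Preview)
Your argument is correct. Note, however, that the paper does not supply its own proof of this lemma: it is simply quoted from \cite{ls}*{Lemma 2.1A} (the paragraph preceding the lemma says ``We need a formula for the cocycle defined by these lifts'' and then states the result with its citation). So there is no ``paper's own proof'' to compare against; what you have written is a self-contained verification that the cocycle $c(x,y)=n(x)n(y)n(xy)^{-1}\in T$ is given by $\exp(\pi\sqrt{-1}\sum_{\beta\in S}\ch\beta)$, carried out by induction on $\ell(y)$ using only the defining properties (1) and (2) of the section. The two-case analysis on the sign of $xy(\alpha)$ and the bookkeeping with $s_\alpha(S')$ versus $S$ are exactly the right way to control the extra root $\alpha$, and your observation that $-\alpha\notin y^{-1}(\Delta^-)$ (because $y(\alpha)\in\Delta^-$) is the key point that forces $s_\alpha(S')\subseteq\Delta^+\setminus\{\alpha\}$.
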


\begin{lemma}
  \label{l:n(z)}
  Suppose $x,y\in\Wext$ satisfy $\ell(xy)=\ell(x)+\ell(y)$.
  Let $z=xyx\inv$.  Then
$$
n(z)=n(x)n(y)n(x)\inv \exp(\pi \sqrt{-1} \sum_{\beta\in \Delta^+\cap x(\Delta^-)\cap z\inv(\Delta^-)}\ch\beta).
$$
  
\end{lemma}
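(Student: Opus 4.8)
The plan is to derive the formula directly from the cocycle identity of Lemma \ref{factor}, applied to the relation $zx=xy$ in $\Wext$ (which holds because $z=xyx\inv$). Applying Lemma \ref{factor} to the pair $(z,x)$ gives
$$
n(z)n(x)=n(zx)\exp(\pi\sqrt{-1}\,\gamma^\vee),\qquad \gamma^\vee=\sum_{\beta\in S}\ch\beta,\quad S=\Delta^+\cap x\inv(\Delta^-)\cap x\inv z\inv(\Delta^+).
$$
Since $\ell(xy)=\ell(x)+\ell(y)$, property (2) of the chosen lifts yields $n(xy)=n(x)n(y)$, and because $zx=xy$ as elements of $\Wext$ we have $n(zx)=n(xy)$. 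Substituting these in, the equation becomes $n(z)n(x)=n(x)n(y)\exp(\pi\sqrt{-1}\,\gamma^\vee)$.

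Next I would solve for $n(z)$ and move the torus factor across $n(x)\inv$. Writing
$$
n(z)=n(x)n(y)n(x)\inv\cdot n(x)\exp(\pi\sqrt{-1}\,\gamma^\vee)n(x)\inv,
$$
and using that conjugation by $n(x)$ acts on $T$ through the action of $x$ on coweights, I get $n(x)\exp(\pi\sqrt{-1}\,\gamma^\vee)n(x)\inv=\exp(\pi\sqrt{-1}\,x(\gamma^\vee))$. Here $x(\gamma^\vee)=\sum_{\beta\in S}(x\beta)^\vee=\sum_{\alpha\in x(S)}\ch\alpha$, and substituting $\alpha=x\beta$ into the three defining conditions of $S$ gives $x(S)=x(\Delta^+)\cap\Delta^-\cap z\inv(\Delta^+)$.

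The final step is to identify the contribution of $x(S)$ with the index set $\Delta^+\cap x(\Delta^-)\cap z\inv(\Delta^-)$ in the statement. The key observation is that $\exp(\pi\sqrt{-1}\,\ch\alpha)=\ch\alpha(-1)$ has order dividing $2$ (since $\exp(2\pi\sqrt{-1}\,\ch\alpha)=\ch\alpha(1)=1$ for the integral coweight $\ch\alpha$), so its value is unchanged under $\alpha\mapsto-\alpha$; hence $\exp(\pi\sqrt{-1}\,x(\gamma^\vee))$ is unchanged if $x(S)$ is replaced by $-x(S)$. Negating flips each of the three conditions to its opposite, $-x(\Delta^+)=x(\Delta^-)$, $-\Delta^-=\Delta^+$ and $-z\inv(\Delta^+)=z\inv(\Delta^-)$, so that $-x(S)=\Delta^+\cap x(\Delta^-)\cap z\inv(\Delta^-)$, which is exactly the desired set. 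Combining, $n(z)=n(x)n(y)n(x)\inv\exp(\pi\sqrt{-1}\sum_{\beta\in\Delta^+\cap x(\Delta^-)\cap z\inv(\Delta^-)}\ch\beta)$.

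The step I expect to require the most care is not conceptual but bookkeeping: correctly transcribing the three root-subset membership conditions through the substitution $\alpha=x\beta$ and then through the global negation, together with checking the sign-insensitivity of $\exp(\pi\sqrt{-1}\,\ch\alpha)$. Everything else is a formal manipulation of Lemma \ref{factor} and the length-additivity hypothesis, so the main risk of error lies in keeping the $\Delta^+$ versus $\Delta^-$ conditions straight across the conjugation.
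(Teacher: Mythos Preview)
Your proof is correct and uses the same ingredients as the paper's: Lemma~\ref{factor}, the length-additivity hypothesis giving $n(xy)=n(x)n(y)$, and the fact that $\exp(\pi\sqrt{-1}\,\ch\alpha)$ has order dividing $2$. The only difference is cosmetic: the paper applies Lemma~\ref{factor} to the pair $(xy,x\inv)$ and then separately evaluates $n(x\inv)=n(x)\inv\exp(\pi\sqrt{-1}\sum_{\beta\in\Delta^+\cap x(\Delta^-)}\ch\beta)$, whereas you apply it to the pair $(z,x)$ and conjugate the resulting torus element by $n(x)$; your route is marginally shorter since it avoids the auxiliary $n(x\inv)$ computation.
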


\begin{proof}
By Lemma \ref{factor}
$$
\begin{aligned}
  n(z)&=n(xyx\inv)\\
  &=n(xy)n(x\inv)\exp(-\pi \sqrt{-1}\sum_{\b \in \Delta^+\cap x(\Delta^-)\cap z\inv(\Delta^+)}\ch\beta)\\
  &=n(x)n(y)n(x\inv)\exp(-\pi \sqrt{-1}\sum_{\b \in \Delta^+\cap x(\Delta^-)\cap z\inv(\Delta^+)}\ch\beta)\\
  &=n(x)n(y)n(x\inv)\exp(\pi \sqrt{-1}\sum_{\b \in \Delta^+\cap x(\Delta^-)\cap z\inv(\Delta^-)}\ch\beta).\\
\end{aligned}
$$
The last equality follows from 
$n(x\inv)=n(x)\inv\exp(\pi \sqrt{-1}\sum_{\b \in \Delta^+\cap x(\Delta^-)}\ch\beta)$, which follows from another application of Lemma \ref{factor}.
\end{proof}


\subsection{A technical lemma}
\label{technical}

In the setting of the beginning of this section, we focus
our attention on $\Delta_1$, with simple roots $\Pi_1$,  and the element $\delta_1\in \Wext$
which preserves $\Pi_1$.
Recall that $L_1$ is the standard Levi subgroup corresponding to $\Pi_1$.

Recall (\ref{e:algorithm}(i)):
$$
\ch\l_1=\frac{d(\th_2-\th_1)}{2 \pi}
\ch\rho_1+\overline{\ch\l_2}.
$$

We say $v\in V^{\delta_1}$ is a general point of $V^{\delta_1}$ if $\langle\a, v\rangle=0$ for some
root $\a \in \Delta$ implies $\langle\a, V^{\d_1}\rangle=0$.  We can
find a general point of $V^{\delta_1}$ in a sufficiently small
neighborhood of $\ch\l_{1}$.  Furthermore, since the (open)
$\Pi_1$-dominant chamber intersects $V^{\delta_1}$ we may assume $v$
is strictly $\Pi_1$-dominant.
Given $v$ let  $z \in W$ be the (unique) minimal element such
that $z\i(v) = \overline v$, the unique dominant $W$-conjugate of $v$.
Then $z\inv(\ch\l_{1})=\overline{\ch\l_{1}}$.

Set $y=z\inv\delta_1z$.  Then $y(\overline v)=\overline v$, and
(since $\overline v$ is dominant and $\delta$ fixes the dominant
chamber) $\delta(\overline v)=\overline v$.

Set
$$
\Delta'=\{\alpha\in\Delta\mid \langle\alpha,V^y\rangle=0\}.
$$
Then $\Delta'$ is the root system of a Levi factor $L'$; we write $W_{\Delta'}$ for its Weyl group.
Note that, since $v$ is a general point of $V^{\delta_1}$,  $\Delta'=\{\alpha\in\Delta\mid \langle\alpha,\overline v\rangle=0\}$.
Then  (since $\delta(\overline v)=\overline v$), $\delta(\Delta')=\Delta'$. Also $y\in W_{\Delta'}\delta$,
and by the definition of $z$ we have

\begin{equation}
\label{e:lengthsadd}
\ell(zy)=\ell(z)+\ell(y).
\end{equation}

\begin{lemma} \label{tech}
Let $d$ be the order of $y$. Then
$$\Delta^- - z \i(\Delta_{1})-\Delta' = \cup_{i=0}^{d-1} y^{-i}(\Delta^- \cap z \i(\Delta^+) \cap z \i \d_1 \i(\Delta^-)).$$
Moreover, each root of $\Delta^- - z \i(\Delta_{1})-\Delta'$ lies in exactly $\frac{d \th_1} {2\pi}$ of the sets $y^{-i}(\Delta^- \cap z \i(\Delta^+) \cap z \i \d_1 \i(\Delta^-))$ for $0 \le i \le d-1$.
\end{lemma}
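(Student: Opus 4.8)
The plan is to conjugate the whole statement by $z$ and reduce it to a counting problem for the single element $\delta_1$ acting on $\Delta$. Writing $\gamma=z(\beta)$ and using $y^i=z\i\delta_1^i z$, a root $\beta$ lies in $y^{-i}(\Delta^-\cap z\i(\Delta^+)\cap z\i\delta_1\i(\Delta^-))$ exactly when $\delta_1^i(\gamma)\in z(\Delta^-)$, $\delta_1^i(\gamma)\in\Delta^+$, and $\delta_1^{i+1}(\gamma)\in\Delta^-$. First I would record the dictionary coming from $v=z(\overline v)$ with $\overline v$ dominant: for $\eta\notin\Delta'_1:=\{\alpha\mid\langle\alpha,V^{\delta_1}\rangle=0\}$ one has $\eta\in z(\Delta^-)\Leftrightarrow\langle\eta,v\rangle<0$, and consequently $z(\Delta^- - z\i(\Delta_1)-\Delta')=\{\gamma\notin\Delta_1\mid\langle\gamma,v\rangle<0\}$. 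The decisive simplification is that $v$ is fixed by $\delta_1$, so $\langle\delta_1^i(\gamma),v\rangle=\langle\gamma,v\rangle$ for all $i$; hence the first condition $\delta_1^i(\gamma)\in z(\Delta^-)$ is independent of $i$ and collapses to $\langle\gamma,v\rangle<0$ (for $\gamma\notin\Delta'_1$). For $\gamma$ in the target set the number of relevant $i$ then equals $c(\gamma):=\#\{0\le i\le d-1\mid\delta_1^i(\gamma)\in\Delta^+,\ \delta_1^{i+1}(\gamma)\in\Delta^-\}$, the number of positive-to-negative sign changes along the $\delta_1$-orbit of $\gamma$.

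Next I would dispose of the boundary roots. For $\gamma\in\Delta_1$ the orbit stays in $\Delta_1$, and since $\delta_1$ preserves $\Pi_1$ it preserves $\Delta_1^+$, so $c(\gamma)=0$; moreover $z\i(\Delta_1^+)\subset\Delta^+$ (as $\langle z\i\alpha,\overline v\rangle=\langle\alpha,v\rangle>0$ for $\alpha\in\Delta_1^+$) already destroys the first condition, so these roots contribute nothing, matching their exclusion from the left-hand set. The roots of $\Delta'$ are excluded similarly, using $y\in W_{\Delta'}\delta$ and $\ell(zy)=\ell(z)+\ell(y)$ together with the fact that $y$ preserves $\Delta'$. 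This leaves the heart of the matter: to prove $c(\gamma)=\tfrac{d\theta_1}{2\pi}=:m$ for every $\gamma\notin\Delta_1$, which simultaneously gives the multiplicity and (since $m\ge1$) the set equality.

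For the crux I would pass to the quotient $\overline V=V/\langle\Delta_1\rangle$. Because $w=\delta_1 x_2\cdots x_k$ with $x_2\cdots x_k\in W_1=W(\Delta_1)$ acting trivially on $\overline V$, the elements $\delta_1$ and $w$ induce the \emph{same} map $\overline{\delta_1}$ there; since $\delta_1$ preserves $F_1$ and $w|_{F_1}$ is rotation by $\theta_1$, the induced map $\overline{\delta_1}$ rotates the image $\overline{F_1}\cong F_1$ by $\theta_1$, while the sign of $\delta_1^i(\gamma)$ for $\gamma\notin\Delta_1$ is detected by its image $\overline{\delta_1}^i\overline\gamma$. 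Now I invoke good position (Section \ref{s:digression}): the closure of the dominant chamber contains a regular point $u$ of $F_1$, which projects to a regular \emph{and} dominant functional $\overline u\in\overline{F_1}$ lying in the same chamber of $\overline V$ as $\rho^\vee$. As the sign-change count is constant while the functional moves within one chamber—a regular path never crosses any hyperplane $(\overline{\delta_1}^i\overline\gamma)^\perp$—the count $c(\gamma)$ may be computed with $\overline u\in\overline{F_1}$, where $\langle\overline{\delta_1}^i\overline\gamma,\overline u\rangle=|P_{\overline{F_1}}\overline\gamma|\,|\overline u|\cos(i\theta_1+\psi_\gamma)$; as $i$ runs over $0,\dots,d-1$ the argument sweeps $d\theta_1=2\pi m$, producing exactly $m$ positive-to-negative crossings.

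The main obstacle is precisely this last reduction: a priori the sign-change count is taken in the \emph{actual} positive system, which need not be compatible with the rotation plane $F_1$, so $c(\gamma)$ could depend on $\gamma$. The content of the argument is that good position is exactly what aligns the dominant chamber with the $F_1$-rotation, letting one replace $\rho^\vee$ by a functional in $\overline{F_1}$ and thereby forcing the uniform value $m$. The remaining care is bookkeeping at the excluded roots and checking that the sampled cosine—nonzero at every $i$ by regularity of $\overline u$, with angular step $\theta_1\le\pi$—registers each of the $m$ windings exactly once.
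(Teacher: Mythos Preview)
Your approach is correct and is essentially the same as the paper's, modulo a repackaging by conjugating everything by $z$ at the outset. Your key observation---that $v\in V^{\delta_1}$ makes the condition $\delta_1^i(\gamma)\in z(\Delta^-)$ independent of $i$---corresponds exactly to the paper's step~(b), and your use of a regular point $u\in F_1\cap\overline{\mathcal C}$ (supplied by good position) to detect the sign of $\delta_1^i(\gamma)$ is the content of the paper's step~(c). The final circle-crossing count is identical: the paper writes it as $v_i'=y^{-i}z^{-1}(v_0)$ moving around a circle $S$ and crossing $H_\beta$, while you write it dually as $\langle\gamma,\delta_1^{-i}(u)\rangle$ changing sign; these are the same computation. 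One small difference: the paper chooses $v_0$ general in $V(\delta_1,\theta_1)$, while you use $u\in F_1=V(w,\theta_1)\subset V(\delta_1,\theta_1)$; your choice is in fact more directly tied to good position and arguably cleaner.

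Two minor presentational points. First, the passage to the quotient $\overline V=V/\langle\Delta_1\rangle$ and the deformation argument (``the sign-change count is constant while the functional moves within one chamber'') are unnecessary: since $u\in\overline{\mathcal C}$ and $\langle\delta_1^i(\gamma),u\rangle\neq 0$ for $\gamma\notin\Delta_1$, the sign of $\delta_1^i(\gamma)$ is \emph{directly} $\mathrm{sgn}\langle\delta_1^i(\gamma),u\rangle$, no deformation needed. Second, like the paper, you should treat the degenerate case $\theta_1=\pi$ separately (there $w=-1$, $\Delta_1=\emptyset$, $\delta_1=w$, $\Delta'=\Delta$, and both sides are empty); your cosine formula technically still gives $c(\gamma)=d/2=m$, but the circle degenerates to two points.
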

\begin{remark}
  The proof uses a similar method of counting root hyperplanes as in
  \cite{he_nie_minimal_finite}*{Lemma 2.1}.
\end{remark}

\begin{proof}
Let $v_0$ be a general point of $V(\d_1, \th_1)$.  Then $z \i(v_0)$ is a
general point of $V(y, \th_1)$. For $i \in \mathbb Z$, set
$v_i'=y^{-i} z^{-1} (v_0)$. Since $V(y, \th_1)_{\mathbb C}$ is the sum
of eigenspaces of $y$ of eigenvalues $e^{\pm i\th_1}$, the points
$v_i'$ for $i \in \mathbb Z$ are contained in the subspace $D$
spanned by $v_0'$ and $v_1'$. In particular $\dim D \le 2$.

We first consider the case where $\dim D=1$. In this case, we have
$\th_1 = \pi$ and hence $w=-1$. Then $\Delta_{1} = \emptyset$,
$\d_1 = w$, $\Delta' = \Delta$ and $z=1$. One checks that
$\Delta^- - z \i(\Delta_{1})-\Delta' = \emptyset = \Delta^- \cap
z \i(\Delta^+) \cap z \i \d_1 \i(\Delta^-)$. So the statement follows.

Now we consider the case where $\dim D=2$. Let $S \subseteq D$ the circle containing $v_i'$ for $i \in \mathbb Z$. Note that $v_i'=v_j'$ if $(i-j)\th_1/(2\pi) \in \mathbb Z$.

Set $\mathcal B= \Delta^- \cap z \i(\Delta^+) \cap z \i \d_1 \i(\Delta^-)$. Then \[\tag{a}\mathcal B=\{\b \in \Delta^-; H_\b \text{ separates } z^{-1}(\caC) \text{ from } \caC \text{ and } y^{-1} z^{-1}(\caC)\},\] where $\caC$ is the dominant chamber and $H_\b \subseteq V$ denotes the root hyperplane of $\b$.

As $z(\Delta')>0$, we see
$\Delta' \cap \mathcal B = \emptyset$. Since $y \in W' \d =\d W'$, we have $y^{-i}(\mathcal B) \subseteq \Delta^- - \Delta'$. Let $\b \in y^{-i}(\mathcal B)$. By (a), $H_\b$ separates $y^{-i} z^{-1} \caC$ from $y^{-i-1} z^{-1} \caC$. This means $\b \notin z^{-1}(\Delta_{1})$ as
$\delta_1(\Pi_1)=\Pi_1$.
So $$y^{-i}(\mathcal B) \subseteq \Delta^- - z^{-1}(\Delta_{1}) -\Delta'.$$

Let $\b \in \Delta^- - z^{-1}(\Delta_{1}) -\Delta'$. If $v_k' \in H_\b$ for some $k$, then $z^{-1}(V_{\th_1}) \subseteq H_\b$ since $v_k'=y^{-k} z^{-1}(v_0)$ is a general point of $z^{-1}(V_{\th_1})$. This means $\b \in z^{-1}(\Delta_{1})$, a contradiction. So $v_i' \notin H_\b$ for any $i \in \mathbb Z$. Thus, $H_\b \cap S = \{\pm p\}$ for some $p \in S$. Let $S^+, S^-$ be the two connected components of $S - \{\pm p\}=S - H_\b$ such that $\bar v, S^+$ are on the same side of $H_\b$.

Let $\overline\caC$ be the closure of $\caC$. Recall that $v$ is a general point of $V^{\d_1}$. As $\b \notin \Delta'$ and $\bar v \in \overline\caC$, $\bar v \notin H_\b$ and

\noindent(b) $y^{-i}(\bar v)=\bar v, y^{-i}(\caC)$ are on the same side of $H_\b$ for $i \in \mathbb Z$.

Similarly, one has

\noindent(c) $v_i', y^{-i} z^{-1}(\caC)$ are on the same side of $H_\b$ for $i \in \mathbb Z$.

Combining (a), (b) and (c), we have

\noindent(d) $\b \in y^{-i}(\mathcal B)$ if and only if $v_i' \in S^-$ and $v_{i+1}' \in S^+$.

Notice that the acute arc $(v_i', v_{i+1}')$ is of angle $0 < \th_1 < \pi$. It follows that that the number of integers $0 \le i \le d-1$ satisfying the condition (d) is exactly $d\th_1/2\pi$. The proof is finished.
\end{proof}

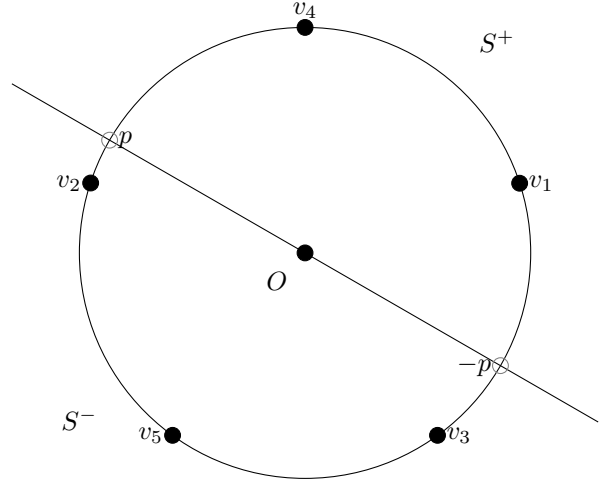
\begin{figure}
\center
\begin{tikzpicture}[scale=1.5]

 \draw[-] (-2.598,1.5) -- (2.598,-1.5) ;
  \path (1.7,1.7) node[above]{$S^{+}$};
  \path (-2,-1.3) node[below]{$S^{-}$};
 \draw (0,0) circle (2);
 \path (-0.25,-0.25) node(axis0){$O$};
 \draw[fill, black] (0,0) circle [radius=.07cm];
 \path (-1.732,1) node(text1)[right]{$p$};
 \draw[gray] (-1.732,1) circle [radius=.07cm];
\path (1.732,-1) node(text2)[left]{$-p$};
\draw[gray] (1.732,-1) circle [radius=.07cm];
\path (0,2) node(text3)[above]{$v_4$};
\draw[fill, black] (0,2) circle [radius=.07cm];
\path (1.902,0.618) node(text4)[right]{$v_1$};
 \draw[fill, black] (1.902,0.618) circle [radius=.07cm];
 \path (-1.9,0.62) node(text4)[left]{$v_2$};
 \draw[fill, black] (-1.9,0.62) circle [radius=.07cm];
 \path (1.174,-1.618) node(text4)[right]{$v_3$};
 \draw[fill, black] (1.174,-1.618) circle [radius=.07cm];
 \path (-1.174,-1.618) node(text4)[left]{$v_5$};
 \draw[fill, black] (-1.174,-1.618) circle [radius=.07cm];
\end{tikzpicture}

\caption{This is an illustration for the proof of Lemma \ref{tech}. Here $d=5$ and $\th_1 = \frac{4\pi}{5}$. The straight line is the intersection $H_\b \cap D$.}
\end{figure}

\begin{exampleplain}
  Let $W$ be of type $B_3$ and $\Pi=\{1, 2, 3\}$ such that $\a_3$ is the unique short simple root and $s_{13} = s_{31}$. Let $w=s_{12323}$. Then $\th_1 = \pi/2$, $d = 4$ and $w = \d_1 w_1$, where $\d_1 = s_{1232}$, $w_1 = s_3$, $\Pi_1=\{3\}$ and $V^{\d_1} = \mathbb R \a_3$. We take $v = \a_3$. Hence $z = s_{21}$, $y = s_{23}$ and $\Delta'=\{2, 3\}$. One checks that each root of $\Delta^- - z\i(\Delta_{1}) - \Delta'=\{-(\a_1), -(\a_1+\a_2), -(\a_1+\a_2+2 \a_3), -(\a_1+2 \a_2+2 \a_3)\}$ appears exactly one of the sets $\{-y^i(\a_1+\a_2)\}$ for $0 \le i \le 3$.
\end{exampleplain}

\subsection{Proof of Theorem \ref{t:main}}
We argue by induction on the cardinality of $\Delta$.  If
$\Delta=\emptyset$, then the statement is trivial.  Suppose that
$\Delta \neq \emptyset$ and that the statement is true for
any proper subsystem of $\Delta$.

Recall from the beginning of Section \ref{s:general} we have
$\delta_1=\delta x_1$, and this satisfies $\delta_1(\Pi_1)=\Pi_1$. Let
$L_1$ be the Levi subgroup of $G$ with root system $\Delta_1$, with derived group $L_{1,der}$.
The conjugation action of
$n(\d_1)$ preserves $L_{1,der}$. Then
$$
n(w),n(\delta_1) \in L_{1,der}\rtimes\langle n(\d_1)\rangle \subset {}^{\delta} G.
$$
Then $\Delta_1\subsetneq \Delta$.
Let $\ch\lambda_{0,L_1}$ be the element defined for $L_1$ with the angle sequence $\theta_2,\theta_3,\dots$,
i.e.
\begin{equation}
\label{e:lambda0L1}
\ch\lambda_{0,L_1}=\frac{d\theta_2}{2\pi}\ch\rho_1+\overline{\ch\lambda_2}
\end{equation}
Then by the inductive hypothesis
applied to $L_{1, der}$ we have
\begin{subequations}
\renewcommand{\theequation}{\theparentequation)(\alph{equation}}  
\label{e:maincalc}
\begin{equation}
n(w)\sim_G n(\d_1) \exp(\frac{2 \pi \sqrt{-1} \ch\l_{0, L_1}}{d}),
\end{equation}
where $\ch\l_{0, L_1}$ is given by \eqref{e:lambda0L1}.
Define $y$ and $z$ as at the beginning of Section \ref{technical}. Then \eqref{e:lengthsadd}
holds so we can apply  Lemma \ref{l:n(z)} to conclude
\begin{equation}
\begin{aligned}
  n(w)&\sim_G n(z)^{-1} n(\d_1) \exp(\frac{2 \pi \sqrt{-1} \ch\l_{0, L_1}}{d}) n(z)\\
  &=n(y) t \exp(\frac{2 \pi \sqrt{-1} z^{-1}(\ch\l_{0, L_1})}{d}),
\end{aligned}
\end{equation}
where

\begin{align}
  t&=\exp (\pi \sqrt{-1}\sum_{\b \in \Delta^+ \cap z(\Delta^-) \cap \d_1 \i(\Delta^-)} z^{-1}(\b^\vee))\\
  &=
\exp (\pi \sqrt{-1}\sum_{\b \in \Delta^- \cap z\inv(\Delta^+) \cap z\inv\d_1 \i(\Delta^-)} \b^\vee).
\end{align}
\end{subequations}

As in Section \ref{technical} let $\Delta'=\{\alpha\in\Delta\mid \langle\alpha, V^y\rangle=0\}$.
This is the root system of a Levi factor $L'$, with Weyl group $W_{\Delta'}$.

\begin{lemma}\label{empty}
	We have $z\i(\Delta_1^+) \subseteq \Delta^+$ and $z\i(\Delta_1) \cap \Delta' = \emptyset$.
\end{lemma}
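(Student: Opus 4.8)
The plan is to prove both containments directly from the defining property $z\i(v)=\overline{v}$ together with the strict $\Pi_1$-dominance of $v$, using that $W$ preserves the form $\langle\,,\,\rangle$ on $V_\R$. The engine of both parts is the single identity
$$
\langle z\i(\alpha),\overline{v}\rangle=\langle z\i(\alpha),z\i(v)\rangle=\langle\alpha,v\rangle\qquad(\alpha\in\Delta),
$$
which converts a statement about the root $z\i(\alpha)$ into a statement about the pairing of $\alpha$ with the strictly $\Pi_1$-dominant vector $v$.

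For the first containment, I would take $\alpha\in\Delta_1^+$. Since $v$ is strictly $\Pi_1$-dominant, $\langle\alpha,v\rangle>0$, so the displayed identity gives $\langle z\i(\alpha),\overline{v}\rangle>0$. Because $\overline{v}$ is dominant, every negative root pairs non-positively with $\overline{v}$; hence $z\i(\alpha)$ cannot be negative and must lie in $\Delta^+$. This yields $z\i(\Delta_1^+)\subseteq\Delta^+$, and note that no appeal to the minimality of $z$ is needed here, only $z\i(v)=\overline{v}$ and the dominance of $\overline{v}$.

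For the second containment, recall from Section \ref{technical} that, because $v$ is a general point of $V^{\delta_1}$, we have $\Delta'=\{\beta\in\Delta\mid\langle\beta,\overline{v}\rangle=0\}$. Now take any $\alpha\in\Delta_1$, positive or negative. Strict $\Pi_1$-dominance of $v$ forces $\langle\alpha,v\rangle\neq 0$ for every nonzero root $\alpha$ of $\Delta_1$, so the displayed identity gives $\langle z\i(\alpha),\overline{v}\rangle\neq 0$, i.e. $z\i(\alpha)\notin\Delta'$. Hence $z\i(\Delta_1)\cap\Delta'=\emptyset$.

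I do not expect a serious obstacle: once one observes that the two assertions are governed respectively by the sign and by the vanishing of $\langle\alpha,v\rangle$ for $\alpha\in\Delta_1$, the argument reduces to orthogonal invariance of $\langle\,,\,\rangle$ together with the facts that $\overline{v}$ is dominant and that $\Delta'$ is cut out by $\overline{v}$. The only point meriting a line of care is to record that strict $\Pi_1$-dominance, imposed literally only on the simple roots $\Pi_1$, propagates to all of $\Delta_1^+$, since a positive integral combination of $\Pi_1$ pairs strictly positively with $v$, and hence by sign to all of $\Delta_1$.
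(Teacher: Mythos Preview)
Your proof is correct and follows essentially the same approach as the paper's: both hinge on the identity $\langle z\i(\alpha),\overline{v}\rangle=\langle\alpha,v\rangle$ together with strict $\Pi_1$-dominance of $v$ and the description $\Delta'=\{\beta\in\Delta:\langle\beta,\overline{v}\rangle=0\}$. The paper dispatches both assertions in a single line for $\alpha\in\Delta_1^+$, whereas you separate them and add the helpful remark that strict positivity on $\Pi_1$ propagates to all of $\Delta_1^+$; otherwise the arguments are identical.
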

\begin{proof}
  Notice that $\Delta' = \{\a \in \Delta, \langle\a, \bar v\rangle = 0 \}$. For $\gamma \in \Delta_1^+$ we have $$\langle z\i(\gamma), \bar v\rangle = \langle z\i(\gamma), z\i(v)\rangle = \langle \gamma, v\rangle > 0,$$ where the last inequality follows from
  the fact that $v$ is strictly $\Pi_1$-dominant. So $z\i(\gamma) > 0$ and $z\i(\gamma) \notin \Delta'$ as desired.
\end{proof}


Let $\ch\gamma$ be the summand in (d). By Lemma \ref{tech} and Lemma \ref{empty}, with $c=\frac{d\theta_1}{2\pi}\in\Z$ we have
\begin{subequations}
\label{e:maincalc1}
\begin{equation}
\sum_{j=0}^{d-1}y^j\ch\gamma=2c(-\rho^\vee+z^{-1}(\rho_1^\vee)+\rho_{L'}^\vee).
\end{equation}
Since (d) is unchanged if we replace $\ch\gamma$ with $-\ch\gamma$, 
by  Lemma \ref{average}  we see $n(y)t$ is $T$-conjugate to
\begin{equation}
n(y)\exp(\frac{2\pi \sqrt{-1}c(\ch\rho-z\inv(\ch\rho_1)-\ch\rho_{L'})}{d})
\end{equation}
and therefore
\begin{equation}
n(w)\sim n(y)\exp(\frac{2\pi \sqrt{-1}c(\ch\rho-z\inv(\ch\rho_1)-\ch\rho_{L'})}{d})
\exp(\frac{2 \pi \sqrt{-1}z\inv( \ch\l_{0, L_1})}{d}).
\end{equation}
\end{subequations}
\begin{lemma}
No roots in $\Delta'$ vanish identically on $V(y, \th_1)$.
\end{lemma}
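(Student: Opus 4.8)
The plan is to produce an explicit nonzero subspace of $V(y,\theta_1)$ on which no root of $\Delta'$ can vanish, namely the image $z\i(F_1)$ of the first piece $F_1=V(w,\theta_1)$ of the filtration \eqref{e:F_k}. Granting the inclusion $z\i(F_1)\subseteq V(y,\theta_1)$, the lemma reduces to the assertion that no $\alpha\in\Delta'$ vanishes identically on $z\i(F_1)$, and this will drop out of Lemma \ref{empty}.

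The key step, and the one I expect to be the main obstacle, is the inclusion $F_1\subseteq V(\d_1,\theta_1)$, i.e. that $\d_1$ reproduces the $\theta_1$-eigenbehaviour of $w$ on $F_1$. To see this, recall from the good factorization (Proposition \ref{good-fac}) that $w=\d_1 x_2\cdots x_k$ with $x_i\in W_{i-1}$ for $2\le i\le k$. Since $W_0\supseteq W_1\supseteq\cdots$, all these factors lie in $W_1=W(\Delta_1)$, so $x_2\cdots x_k\in W_1$. As $\Delta_1$ is orthogonal to $F_1$ by definition, $W_1$ fixes $F_1$ pointwise; hence $w|_{F_1}=\d_1|_{F_1}$. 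Therefore $\d_1$ preserves $F_1$ and acts there with eigenvalues $e^{\pm i\theta_1}$, giving $F_1\subseteq V(\d_1,\theta_1)$. Since $y=z\i\d_1 z$ intertwines the $\theta_1$-eigenspaces of $\d_1$ and $y$, applying $z\i$ yields $z\i(F_1)\subseteq z\i\bigl(V(\d_1,\theta_1)\bigr)=V(y,\theta_1)$, with $z\i(F_1)\neq 0$ because $F_1\neq 0$.

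Finally I would argue by contradiction. Suppose some $\alpha\in\Delta'$ vanished identically on $z\i(F_1)$. Since $z$ preserves the pairing, $\langle\alpha,z\i(F_1)\rangle=\langle z(\alpha),F_1\rangle$, so $z(\alpha)$ would be orthogonal to $F_1$, i.e. $z(\alpha)\in\Delta_1$, and hence $\alpha\in z\i(\Delta_1)\cap\Delta'$. But Lemma \ref{empty} gives $z\i(\Delta_1)\cap\Delta'=\emptyset$, a contradiction. Thus no root of $\Delta'$ vanishes on $z\i(F_1)$, and a fortiori none vanishes on the larger space $V(y,\theta_1)$. The whole argument rests on the single geometric observation that the tail $x_2\cdots x_k$ of the good factorization lies in $W_1$ and so acts trivially on $F_1$; everything after that is transport of structure together with Lemma \ref{empty}.
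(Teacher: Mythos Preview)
Your proof is correct and is essentially the paper's own argument. Both proofs use the observation that $x_2\cdots x_k\in W_1$ fixes $F_1=V(w,\theta_1)$ pointwise to obtain $F_1\subseteq V(\d_1,\theta_1)$, transport this via $z$ to $z\i(F_1)\subseteq V(y,\theta_1)$, and then derive a contradiction from Lemma~\ref{empty} exactly as you do.
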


\begin{proof}

Let $\a \in \Delta'$. We have to show that
$\langle\a, V(y, \th_1)\rangle \neq \{0\}$. 
Assume otherwise. Noticing that
$\d_1=w (x_2 \cdots x_k) \i$ and $x_2 \cdots x_k \in W_1$ fixes each
point of $V(w, \th_1)$, we have
$V(w, \th_1) \subseteq V(\d_1, \th_1)=z^{-1}(V(y, \th_1))$. Since $\langle\alpha,V(y,\theta_1)\rangle=\{0\}$, we conclude that
$z(\a) \in \Delta_1$, which contradicts Lemma \ref{empty}.
\end{proof}

Therefore we can apply Proposition \ref{regular} to conclude
$n(y)\sim_{L'}\delta\exp(\frac{2\pi \sqrt{-1}c}d\ch\rho_{L'})$.
Using the fact that $\ch\rho-z\inv\ch\rho_1-\ch\rho_{L'}$ and $z\inv(\ch\lambda_{0,L_1})$ are central in $L'$, we can insert
this in \eqref{e:maincalc1}(c)  to give
$$
\begin{aligned}
n(w)&\sim \delta\exp(\frac{2\pi \sqrt{-1}c}d\ch\rho_{L'})
  \exp(\frac{2\pi \sqrt{-1}c(\ch\rho-z\inv(\ch\rho_1)-\ch\rho_{L'}}d)
  \exp(\frac{2 \pi \sqrt{-1}z\inv( \ch\lambda_{0,L_1})}{d})\\
  &=\delta\exp(\frac{2\pi \sqrt{-1}(c\ch\rho+z\inv(\ch\lambda_{0,L_1}-c\ch\rho_1))}d)\\
\end{aligned}
$$
By \eqref{e:lambda0L1} and the definition of $c$ we compute:
$$
z\inv(\ch\lambda_{0,L_1}-c\ch\rho_1)=z\inv(\overline{\ch\lambda_2}+\frac{d\theta_2}{2\pi}\ch\rho_1-\frac{d\theta_1}{2\pi}\ch\rho_1)
=z\inv(\ch\lambda_1)=\overline{\ch\lambda_1}.
$$
Inserting this we conclude
$$
n(w)\sim\delta\exp(\frac{2\pi \sqrt{-1}(c\ch\rho+\overline{\ch\lambda_1})}d)=
\delta\exp(\frac{2\pi \sqrt{-1}}d\ch\lambda_0).
$$
This completes the proof of Theorem \ref{t:main}.
\qed

\sec{Kac diagrams}
\label{s:kac}

We consider a connected complex semisimple group $G$, with Cartan subgroup $T$, root system $\Delta$,
simple roots $\Pi$ and Weyl group $W$. We are also given
an  automorphism $\delta$ of $G$, possibly trivial, of finite order $n$, preserving a pinning.
The semisimple
conjugacy classes of $G\delta$ of finite order are parametrized by their
{\it Kac diagrams}. We summarize the statements here, and refer to \cite{DM}, \cite{kac_book}, 
\cite{ov} and \cite{reeder_torsion} for details.

Let $X^*$, respectively $X_*$, be the lattice of characters (resp. co-characters) of $T$.
Let $R=\Z\langle\Delta\rangle\subset X^*$ be
the root lattice, and $\ch\Delta\subset \ch R\subset X_*$ the co-roots and co-root lattice.

Recall that $V=X_*\otimes\Q$. Then $\delta$ acts on $V$, and we also write
$\delta$ for the transpose action on $V^*=\Hom_\Q(V,\Q)$.
We write $\delta$ as a superscript to denote fixed points.
We identify $(V^\delta)^*$ with $(V^*)^\delta$, and we view this as a subset of $\Lie(T)$.

We can write $T=(T^\delta)^0(1-\delta)T$, where $\phantom{}^0$ denotes identity component,
and $(1-\delta)T=\{t\delta(t\inv)\mid t\in T\}$. Both groups are (connected) tori.
It follows easily from this that every element of $G\delta$ is $G$-conjugate
to an element of $(T^\delta)^0$.

Define
\begin{equation}
\label{e:e}
e(\ch\gamma)=\exp(2\pi \sqrt{-1}\ch\gamma) \quad(\ch\gamma\in V^\delta).
\end{equation}
This map is surjective  onto the elements of finite order in $(T^\delta)^0$.

Although we will make no use of this fact, it is interesting to note that if $G$ is simple then
$G^\delta$  and $T^\delta$ are connected.

Let $\Pi_\delta$ the set of orbits of $\delta$ on $\Pi$. The map
$\Pi\ni\alpha\mapsto \alpha|_{T^{\delta}}$ identifies $\Pi_\delta$ with 
a set of simple roots of 
$G^{\delta}$. Write $\Pi_\delta=\{\alpha_1,\dots,\alpha_r\}$.

Let $\Delta_\delta$ be the root system of
$(T^{\delta})^0\subset (G^{\delta})^0$, and let
$\ch\Delta_\delta=\{\ch\alpha\mid \alpha\in\Delta_\delta\}\subset
V^\delta$ be the canonical coroot defined by
(\cite{bourbaki_4-6}*{Chapter 6, Section 1.1}).
Set $\ch R_\delta=\Z\langle\ch\Delta_\delta\rangle$, the coroot lattice of $\Delta_\delta$.
Then
$$
X_*((T^{\delta})^0)=(X_*)^\delta\quad\text{and}\quad \ch R_\delta=(\ch R)^\delta.
$$
Let $W_\delta$ be the Weyl group of $\Delta_\delta$.

Define projection $P:V\rightarrow V^\delta$ by
$P(v)=\frac1n \sum_{i=0}^{n-1}\delta^iv$.
Then $P(X_*)$ is a lattice containing $X_*^\delta$ of finite index,
and $W_\delta$ acts on $P(X_*)$ and $P(\ch R)$. Define:
\begin{equation}
\label{e:affine}
\begin{aligned}
\wh W_\delta&=W_\delta\ltimes P(X_*), \\
\wt W_\delta&=W_\delta\ltimes P(\ch R).
\end{aligned}
\end{equation}
Then $\wt W_\delta$ is an affine Weyl group. Let $\overline C$ be a
fundamental domain for the action of $\wt W_\delta$ on $V^\delta$.
Furthermore
$\wh W_\delta$ is an extended affine Weyl group, and
$$
\Omega=\wh W_\delta/\wt W_\delta\simeq P(X_*)/P(\ch R)
$$
is a finite group which acts naturally on $\overline C$.

\begin{lemma}
\label{l:Omega}
If $G$ is adjoint then $\Omega\simeq \pi_1(G^\delta)$.
\end{lemma}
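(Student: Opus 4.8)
The plan is to express both sides as the $\delta$-invariants (equivalently coinvariants) of the center $Z=Z(\Gsc)$ of the simply connected cover of $G$. The starting point is the standard fact that for any connected reductive group $H$ with maximal torus $S$ one has $\pi_1(H)\cong X_*(S)/\ch Q_H$, where $\ch Q_H$ is the coroot lattice, and that this quotient is exactly $\wh W(H)/\wt W(H)$ for the extended affine and affine Weyl groups of $H$. Applying this to $H=(G^\delta)^0$ and using the identifications recorded in this section, namely $W_\delta=W(H)$, $X_*(S)=X_*^\delta$ and $\ch R_\delta=(\ch R)^\delta=\ch Q_H$, reduces the lemma to the purely lattice-theoretic comparison $P(X_*)/P(\ch R)\cong X_*^\delta/(\ch R)^\delta$. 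Since $G$ is adjoint, $X_*$ is the coweight lattice, so $X_*/\ch R\cong Z$ is finite and carries the $\delta$-action, and I would route both quotients through $Z$.

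For the left-hand side I would argue as follows. The projection $P$ carries $\ch R$ into $P(\ch R)$ and $X_*$ onto $P(X_*)$, so it descends to a surjection $\bar P\colon Z=X_*/\ch R\twoheadrightarrow P(X_*)/P(\ch R)=\Omega$. Because $(1-\delta)X_*\subseteq\ker P$ (as $P\delta=P$), the map $\bar P$ kills $(1-\delta)Z$, giving a surjection $Z_\delta\twoheadrightarrow\Omega$ from the coinvariants. The technical heart is to show this is injective, i.e. that $\ker\bar P$ is exactly $(1-\delta)Z$; unwinding the definitions this reduces to the saturation statement $X_*\cap(1-\delta)V=(1-\delta)X_*$, which I would verify using that for adjoint $G$ the coweight lattice meets each $(1-\delta)$-eigendirection in a primitive vector. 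This yields $\Omega\cong Z_\delta$.

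For the right-hand side I would invoke Steinberg's connectedness theorem: the fixed-point group $(\Gsc)^\delta$ is connected, and when it is simply connected it is the simply connected cover of $H=(G^\delta)^0$, with kernel $Z^\delta$, so that $\pi_1(H)\cong Z^\delta$. Combining with $Z_\delta\cong Z^\delta$, valid for any finite abelian group with a finite-order automorphism by Pontryagin self-duality, gives $\Omega\cong Z_\delta\cong Z^\delta\cong\pi_1(G^\delta)$. The main obstacle is precisely the folded type $A_{2n}$: there $(\Gsc)^\delta=SO_{2n+1}$ fails to be simply connected, and the Bourbaki lattice $(\ch R)^\delta$ strictly contains the genuine coroot lattice $\ch Q_H$ of $H$, so both the identification $\ch R_\delta=\ch Q_H$ and the formula $\pi_1(H)\cong Z^\delta$ break down (indeed for $G=PGL_3$ with the outer $\delta$ one gets $\Omega=0$ while $\pi_1(SO_3)=\Z/2$). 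Isolating and separately handling this case — or restricting the statement to $\delta$-actions with no such orbits — is where the real care is needed.
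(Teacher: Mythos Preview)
The paper's own proof is a one-line assertion: the $\delta=1$ case is ``standard'', the general case reduces to simple $G$, and then one does ``a case-by-case check'', with only the values of $\Omega$ listed (trivial except for $\twoAodd$ and $\twoD$, where $|\Omega|=2$). No computation of $\pi_1(G^\delta)$ is offered. Your approach via the $\delta$-(co)invariants of $Z=Z(\Gsc)$ and Steinberg's connectedness theorem is entirely different and far more conceptual; outside type $\twoAeven$ it gives a uniform proof, and the saturation step $X_*\cap(1-\delta)V=(1-\delta)X_*$ does hold, since for adjoint $G$ the automorphism $\delta$ permutes the basis of fundamental coweights and the claim reduces to the elementary fact that for a permutation of a basis the sum-zero sublattice over each orbit is exactly $(1-\sigma)\Z^n$.

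The crucial point, however, is that your $\twoAeven$ observation is not a gap in your argument --- it is a genuine counterexample to the lemma as stated. Your computation for $PGL_3$ is correct: one has $P(X_*)=P(\ch R)$, so $\Omega=0$ (and the paper's own proof records that $\Omega$ is trivial for $\twoAeven$), whereas $G^\delta\cong SO_3$ has $\pi_1=\Z/2\Z$. The error traces back to the assertion $\ch R_\delta=(\ch R)^\delta$ just above the lemma: in type $\twoAeven$ the restricted root system is of type $BC_n$, and the actual coroot of $G^\delta$ corresponding to a short restricted root is $2(\ch\alpha_i+\ch\alpha_{\delta(i)})$ rather than $\ch\alpha_i+\ch\alpha_{\delta(i)}$, so the coroot lattice of $G^\delta$ sits with index $2$ inside $(\ch R)^\delta$. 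Thus you have detected an error in the paper; the lemma is valid only away from type $\twoAeven$, and your method proves the corrected statement there.
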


\begin{proof}
If $\delta=1$ this is standard. In general since $G$ is adjoint it
reduces easily to the simple case and then a case-by-case check. In
fact in the twisted cases $\Omega$ is trivial except for $\twoAodd$ and $\twoD$, in which case it has order $2$.
\end{proof}

\begin{lemma}
\label{l:kac1}
Every semisimple conjugacy class of finite order in $G\delta$ is of
the form $[e(\ch\gamma)\delta]$ for some $\ch\gamma\in V^\delta$.
  
In particular suppose $\ch\gamma,\ch\tau\in V^\delta$.  Then $[e(\ch\gamma)\delta]=[e(\ch\tau)\delta]$ if and only if there exists $w\in \wh W_\delta$
such that $w\ch\gamma=\ch\tau$.

Suppose $\ch\gamma,\ch\tau\in \overline C$. Then
$[e(\ch\gamma)\delta]=[e(\ch\tau)\delta]$ if and only if there exists
$\omega\in \Omega$ such that $\omega(\ch\gamma)=\ch\tau$.

\end{lemma}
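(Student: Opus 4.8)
The plan is to reduce all three assertions to the structure theory of $(G^\delta)^0$, which is reductive with maximal torus $(T^\delta)^0$, root system $\Delta_\delta$ and Weyl group $W_\delta$. For the first assertion I would use the two facts recorded just before the statement: every element of $G\delta$ is $G$-conjugate into $(T^\delta)^0\delta$, and $e$ maps $V^\delta$ onto the finite-order elements of $(T^\delta)^0$. A finite-order semisimple class therefore has a representative $x\delta$ with $x\in(T^\delta)^0$; since $\delta(x)=x$ and $\delta^n=1$ we get $(x\delta)^n=x^n$, so $x$ has finite order and $x=e(\ch\gamma)$ for some $\ch\gamma\in V^\delta$.

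For the ``if'' part of the second assertion I would treat the two kinds of generators of $\wh W_\delta=W_\delta\ltimes P(X_*)$ separately. A class $w\in W_\delta$ is represented by some $n\in N_{(G^\delta)^0}((T^\delta)^0)$; as $n\in G^\delta$ commutes with $\delta$ in $\Gext$, conjugation gives $n\,e(\ch\gamma)\delta\,n\inv=e(w\ch\gamma)\delta$. For a translation by $P(\ch\mu)$ with $\ch\mu\in X_*$ I would use the direct computation $e(\ch\nu)\,e(\ch\gamma)\delta\,e(\ch\nu)\inv=e(\ch\gamma+(1-\delta)\ch\nu)\delta$ for $\ch\nu\in V$, valid because $\delta\,e(-\ch\nu)\,\delta\inv=e(-\delta(\ch\nu))$. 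Since $P$ is the projection of $V$ onto $V^\delta$ along $(1-\delta)V$, I may write $\ch\mu-P(\ch\mu)=(1-\delta)\ch\nu_0$; conjugating by $e(-\ch\nu_0)$ then produces $e(\ch\gamma+P(\ch\mu))\delta$, using $e(\ch\mu)=1$ for $\ch\mu\in X_*$. These two families generate $\wh W_\delta$, which gives the ``if'' direction.

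The ``only if'' direction is the heart of the matter. Write $s=e(\ch\gamma)\delta$, $s'=e(\ch\tau)\delta$ and suppose $gsg\inv=s'$. I would first show $(T^\delta)^0$ is a maximal torus of the centralizer $Z_G(s)^0$ (reductive by Steinberg): if a torus $S\supseteq(T^\delta)^0$ centralizes $s$, then for $u\in S$ one has $\delta(u)=e(\ch\gamma)\inv u\,e(\ch\gamma)=u$, because $e(\ch\gamma)\in(T^\delta)^0\subseteq S$ and $S$ is abelian, so $S\subseteq(G^\delta)^0$ and hence $S=(T^\delta)^0$; likewise for $s'$. As $g$ carries $Z_G(s)^0$ onto $Z_G(s')^0$, both $g(T^\delta)^0g\inv$ and $(T^\delta)^0$ are maximal tori of $Z_G(s')^0$, so after left-multiplying $g$ by an element of $Z_G(s')^0$ (which fixes $s'$) I may assume $g\in N_G((T^\delta)^0)$. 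It then remains to match the conjugation action of $N_G((T^\delta)^0)$ on $(T^\delta)^0\delta$ with $\wh W_\delta$: the image of $N_G((T^\delta)^0)$ in $\Aut((T^\delta)^0)$ is the Weyl group of the restricted root system, which equals $W_\delta$, while conjugation by the centralizer $M=Z_G((T^\delta)^0)\supseteq T$ shifts the coordinate of $s$ by a twisted commutator $m\delta(m)\inv$. The main obstacle is precisely this last identification: proving that those shifts $m\delta(m)\inv\in(T^\delta)^0$ which keep $s$ inside $(T^\delta)^0\delta$ contribute exactly the lattice $P(X_*)$. This is where $P$, $\pi_1(G)$ and the coroot lattice interact — the affine part $P(\ch R)$ coming from the derived group of $M$ and the residual $\Omega\simeq P(X_*)/P(\ch R)$ from the center — and where one must take care in the non-reduced folding $\twoAeven$. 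I expect to resolve it by reducing $M$-conjugation to $T$-conjugation, where $m\delta(m)\inv=e((1-\delta)\ch\nu)$ brings us back to the computation of the ``if'' direction.

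For the third assertion I would deduce it from the second together with standard facts about extended affine Weyl groups. Given $\ch\gamma,\ch\tau\in\overline C$ with $[e(\ch\gamma)\delta]=[e(\ch\tau)\delta]$, the second assertion provides $w\in\wh W_\delta$ with $w\ch\gamma=\ch\tau$. Realizing $\Omega$ as the stabilizer of $\overline C$ in $\wh W_\delta$ and factoring $w=u\sigma$ with $u\in\wt W_\delta$ and $\sigma\in\Omega$, we obtain $u(\sigma\ch\gamma)=\ch\tau$ with $\sigma\ch\gamma,\ch\tau\in\overline C$; since $\overline C$ is a fundamental domain for $\wt W_\delta$, this forces $\sigma\ch\gamma=\ch\tau$. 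The converse is immediate from the second assertion, so $[e(\ch\gamma)\delta]=[e(\ch\tau)\delta]$ holds if and only if $\ch\tau\in\Omega\ch\gamma$. This step is routine once the earlier parts are in place.
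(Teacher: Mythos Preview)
Your proposal is correct and follows essentially the same route as the paper. The paper gives only a sketch: it records the conjugation identity $e(\ch\mu)e(\ch\gamma)\delta e(-\ch\mu)=e((1-\delta)\ch\mu+\ch\gamma)\delta$ and the lattice identity $[(1-\delta)V+X_*]^\delta=P(X_*)$, declares the third assertion standard, and defers everything else to the references. Your treatment of the first assertion and of the ``if'' direction of the second is exactly this, just unpacked (your decomposition $\ch\mu=P(\ch\mu)+(1-\delta)\ch\nu_0$ is the constructive form of the paper's lattice identity). Where you add content is the ``only if'' direction: the paper says nothing explicit there, while you reduce to $N_G((T^\delta)^0)$ via the maximal-torus-of-centralizer argument and then split off $W_\delta$ and the translation lattice. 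That outline is sound; the one place to be careful is your assertion that the image of $N_G((T^\delta)^0)$ in $\Aut((T^\delta)^0)$ is exactly $W_\delta$---this is the relative Weyl group $N_G(M)/M$ for the Levi $M=Z_G((T^\delta)^0)$, and identifying it with the reflection group $W_\delta$ uses that $\delta$ is pinned (equivalently, $W^\delta\simeq W_\delta$). Once that is in hand, your reduction of the $M$-contribution to $T$-conjugation and hence to $P(X_*)$ via the same commutator computation closes the loop, matching the paper's sketch.
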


\begin{proof}[Sketch of proof]
We've already discussed the first assertion. The second follows from
the calculation that
$$
e(\ch\mu)e(\ch\gamma)\delta e(-\ch\mu)=e((1-\delta)\ch\mu+\ch\gamma)\delta\quad (\ch\mu\in V,\ch\gamma\in V^\delta)
$$
and the fact that
$$
[(1-\delta)V+X_*]^\delta=P(X_*).
$$
The final assertion is standard.
\end{proof}

We now apply the  theory of  affine Weyl groups to describe $\overline C$
when $G$ is simple.
If $\zeta$ is a root of unity let
$$
\g[\zeta]=\{X\in \g\mid \Ad(g)(X)=\zeta X\}.
$$
Then $\g[\zeta]$ is $T^\delta$ invariant. Let $\alpha_0$ be the lowest weight of $T^\delta$ acting on $\g[\zeta]$.
Then
$$
-\alpha_0=
\begin{cases}
  \text{highest root of }\Delta_\delta, &\text{ if } \delta=1;\\
  \text{highest short root of }\Delta_\delta, &\text{ if }\delta\ne 1,\Pi^\delta\ne\emptyset;\\
  \text{2*highest short root of }\Delta_\delta, &\text{ if }\delta\ne 1,\Pi^\delta=\emptyset.
\end{cases}
$$
The last case occurs if $G$ is of type $A_{2n}$ and $\delta$ has order $2$ (type $\twoAeven$).
Set $\wt\Pi_\delta=\{\alpha_0,\alpha_1,\dots,\alpha_r\}$,
and let $\ch\gamma_1,\dots,\ch\gamma_i\in V^\delta$ be the corresponding fundamental co-weights.
Define integers $c_0,c_1,\dots, c_r$ by $c_0=1$ and 
$$
\sum_{i=0}^r c_i\alpha_i=0.
$$
We define the {\it affine Dynkin diagram}  $\Daffine$ of $(G,\delta)$ to be the
Dynkin diagram of $\wt\Pi_\delta$. We equip each node with its label $c_i$.
See \cite{ov}*{Reference Chapter,Table 6} for a list of these diagrams.

The automorphism group of $\Daffine$ is isomorphic to the
automorphism group of $\overline C$. The group $P(X_*)$ acts by
translation on $V^\delta$, and this induces an action of $\Omega$ on
$\overline C$ and $\Daffine$.

Let $\wt\alpha_0$ be the affine function
$$
\wt\alpha_0=\alpha_0+\frac1n.
$$
We define the {\it affine coordinates} of  $\ch\gamma\in V^\delta$ to be
$$
(\wt\alpha_0(\ch\gamma),\alpha_1(\ch\gamma),\dots, \alpha_r(\ch\gamma)).
$$
The affine coordinates $(a_0,\dots, a_r)$ of a point  in $V^\delta$ satisfy
$\sum_{i=0}^r c_ia_i=\frac1n$.

For the fundamental domain $\overline C$ we take
points whose affine coordinates $(a_0,\dots, a_r)$ satisfy $a_i\ge 0$ $(0\le i\le r)$.

\begin{definition}
A Kac diagram for $(G,\delta)$ is a vector $\mathcal D=[a_0,\dots, a_r]$ where
each $a_i$ is a non-negative integer and $\text{GCD}(\{a_0,\dots, a_r\})=1$.
Set $d(\D)=\sum_{i=0}^r a_ic_i$, $n=\text{order}(\delta)$, and define
$$
e(\D)= e(\frac n{d(\D)}\sum_{i=1}^r a_i\ch\gamma_i)\delta.
$$
\end{definition}
Note that $\frac n{d(\D)}\sum_{i=1}^r a_i\ch\gamma_i\in \overline C$.
Here is the conclusion.

\begin{proposition}
  \label{p:kac}
Suppose $g\delta\in G\delta$ satisfies $g^d\in Z(G)$. Then there is a Kac diagram $\D$,
with $d(\D)=d$, such that $[g\delta]=[e(\D)]$.

If $\D,\mathcal E$ are Kac diagrams then $[e(\D)]=[e(\mathcal E)]$ if and only if
there exists $\omega\in \Omega$ satisfying $\omega(\D)=\mathcal E$.
\end{proposition}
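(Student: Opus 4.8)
The plan is to derive the entire statement from the parametrization established in Lemma \ref{l:kac1}, reducing it to a dictionary between two coordinate systems on the fundamental domain $\overline C$: the affine coordinates $(\wt\alpha_0(\ch\gamma),\alpha_1(\ch\gamma),\dots,\alpha_r(\ch\gamma))$ and the integral Kac data $[a_0,\dots,a_r]$. Since the affine diagram $\Daffine$, the marks $c_i$, and the coweights $\ch\gamma_i$ are only set up for $G$ simple, I would first reduce to that case; the general semisimple statement then follows factor by factor.

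For existence, since $Z(G)$ is finite the hypothesis forces $g$ to have finite order, so $g\delta$ is a semisimple element of finite order and Lemma \ref{l:kac1} provides $\ch\gamma\in\overline C$ with $[g\delta]=[e(\ch\gamma)\delta]$. As $g\delta$ has finite order, $\ch\gamma$ is a rational point, so its affine coordinates $(b_0,\dots,b_r)$ are nonnegative rationals with $\sum_i c_i b_i=\tfrac1n$. I would then define $\D=[a_0,\dots,a_r]$ as the unique coprime nonnegative integer vector proportional to $(b_0,\dots,b_r)$; this is a Kac diagram, and unwinding the definition of $e(\D)$ shows that $e(\D)$ represents the same point $\ch\gamma$ of $\overline C$, whence $[g\delta]=[e(\D)]$.

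The arithmetic heart of the existence statement is the identity $d(\D)=d$, and this is the step I expect to be delicate. Choosing the representative inside $(T^\delta)^0$ with $\ch\gamma\in V^\delta$ makes $e(\ch\gamma)$ commute with $\delta$, so the condition $g^{d}\in Z(G)$ reads simply as $\langle\alpha,d\ch\gamma\rangle\in\Z$ for every $\alpha\in\Delta$. Translating this integrality into affine coordinates pins the common denominator of the $b_i$ to $\sum_i c_i a_i=d(\D)$, giving $d(\D)=d$. The bookkeeping here must be done separately in the twisted cases, and in particular in type $\twoAeven$, where $-\alpha_0$ is twice the highest short root; this alters the marks $c_i$ and the passage between $Z(G)$ and the lattices $P(X_*)\supseteq P(\ch R)$, and I expect this to be the only real friction in the argument.

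Finally, the uniqueness statement is almost immediate from the third assertion of Lemma \ref{l:kac1}. The assignment $\D\mapsto\ch\gamma_\D=\tfrac{n}{d(\D)}\sum_{i\ge1}a_i\ch\gamma_i$ is a bijection from Kac diagrams onto the rational points of $\overline C$, and it intertwines the combinatorial action of $\Omega$ on diagrams (permuting the labels $a_0,\dots,a_r$ by automorphisms of $\Daffine$) with the geometric action of $\Omega=P(X_*)/P(\ch R)$ on $\overline C$. Lemma \ref{l:kac1} says $[e(\ch\gamma_\D)\delta]=[e(\ch\gamma_{\mathcal E})\delta]$ precisely when the two points differ by an element of $\Omega$, so by this equivariance $[e(\D)]=[e(\mathcal E)]$ if and only if $\omega(\D)=\mathcal E$ for some $\omega\in\Omega$, as claimed.
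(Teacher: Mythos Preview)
The paper does not actually supply a proof of this proposition: it is stated as ``the conclusion'' of the preceding discussion, with the details deferred to the references \cite{DM}, \cite{kac_book}, \cite{ov}, \cite{reeder_torsion} cited at the top of Section~\ref{s:kac}. Your outline---reduce to Lemma~\ref{l:kac1}, pass to affine coordinates on $\overline C$, and clear denominators to obtain the coprime integer vector $\D$---is exactly the argument one would write down to justify that conclusion, and the uniqueness part is indeed an immediate translation of the third assertion of Lemma~\ref{l:kac1} through the $\Omega$-equivariance of $\D\mapsto\ch\gamma_\D$.

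One point deserves a sharper statement than you give it. The equality $d(\D)=d$ cannot hold for an arbitrary $d$ with $g^d\in Z(G)$, since the definition of a Kac diagram forces $\gcd(a_0,\dots,a_r)=1$ and hence determines $d(\D)$ uniquely from $[g\delta]$; what your denominator-clearing argument really shows is $d(\D)\mid d$. The intended reading (and the one consistent with the tables, e.g.\ the $A_3+\wt A_1$ class in $F_4$ where $d(\D)=8$) is that $d$ is the \emph{least} positive integer with $g^d\in Z(G)$ for a representative $g\in (T^\delta)^0$, equivalently the order of $\Ad(g\delta)$ divided by $n$. With that normalization your integrality argument on $\langle\alpha,d\ch\gamma\rangle$ does give $d(\D)=d$; you should make this minimality hypothesis explicit rather than leaving it implicit. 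Your caution about the $\twoAeven$ case is well placed but does not change the logic, only the arithmetic of the marks $c_i$.
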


We examine the role of the group $\Omega$ more closely.  If
$z\in Z(G^\delta)$ the map $[g\delta]\rightarrow [zg\delta]$ is a well
defined map of conjugacy classes in $G\delta$. Via the Proposition
this induces an action of $Z(G^\delta)$ on Kac diagrams.

Note that the Kac diagrams for $G$ are independent of isogeny; the
only role that isogeny plays is in the action of $\Omega$.  So suppose
$\D$ is a Kac diagram. View it as giving a conjugacy class of finite
order in $\Gsc\delta$ where $\Gsc$ is the simply connected cover of
$G$.  Thus $Z((\Gsc)^\delta)$ acts on Kac diagrams. Recall
$\Omega\simeq \pi_1((\Gsc)^\delta)$, which is a quotient of
$Z((\Gsc)^\delta)$.
This is compatible with Proposition \ref{p:kac}: if $z\in Z((\Gsc)^\delta)$ and $\D$ is a Kac diagram then
$$
[e(z\D)] = [p(z)e(\D)],
$$
where $p:Z((\Gsc)^\delta)\rightarrow Z(G^\delta)$. 

\begin{lemma}\label{6.7}
The orbits of  $Z((\Gsc)^\delta)$  and  $\Aut(\Daffine)$  on  the nodes of $\Daffine$
are the same.
\end{lemma}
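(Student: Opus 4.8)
The plan is to compare both partitions through the group $\Omega$ of \emph{special} automorphisms of $\Daffine$, that is, the automorphisms acting simply transitively on the label-$1$ nodes. As observed just before the lemma, the identity $[e(z\D)]=[p(z)e(\D)]$ together with Proposition \ref{p:kac} shows that each $z\in Z((\Gsc)^\delta)$ permutes the vertex Kac diagrams $[0,\dots,1,\dots,0]$ exactly as an element of $\Omega$ does; this assignment $Z((\Gsc)^\delta)\to\Omega$ is onto (as recalled there, $\Omega$ is a quotient of $Z((\Gsc)^\delta)$), so the $Z((\Gsc)^\delta)$-orbits on nodes are exactly the $\Omega$-orbits. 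It therefore suffices to compare $\Omega$ with the full group $\Aut(\Daffine)$. Here I would use the standard structural fact $\Aut(\Daffine)=\Omega\rtimes\Sigma$ (see, e.g., \cite{kac_book}), in which $\Sigma$ is the stabiliser of the affine node $\alpha_0$; deleting $\alpha_0$ identifies $\Sigma$ with the diagram automorphism group $\Aut(\Pi_\delta)$ of $G^\delta$. Because $\Omega\subseteq\Aut(\Daffine)$, the $\Omega$-orbits refine the $\Aut(\Daffine)$-orbits, so the whole content of the lemma is the claim that $\Sigma$ sends each $\Omega$-orbit to itself.

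The translation I would rely on is that the node $i$ corresponds to the vertex element $t_i=e(\ch\gamma_i/c_i)$, and that two nodes lie in one $\Omega$-orbit precisely when $t_i$ and $t_j$ are conjugate in $\Gad$: indeed $\Omega$ acts on the alcove by translations from the coweight lattice, and these become inner (in fact trivial) in $\Gad$. Thus I must check that every $\sigma\in\Sigma$ fixes the $\Gad$-class of each $t_i$, i.e.\ that $t_{\sigma(i)}\sim_{\Gad}t_i$. In the twisted case this is immediate, since the folded group $G^\delta$ is of type $B$, $C$, $F_4$ or $G_2$ and so carries no nontrivial diagram automorphism, giving $\Sigma=1$. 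In the untwisted types other than $E_6$ it is again quick: there $\Sigma$ moves only label-$1$ nodes — every node of $A_n$ has label $1$, and for $D_n$ the diagram automorphisms fix the label-$2$ chain and only permute the label-$1$ tips. Any label-$1$ node makes $t_i$ central in $\Gsc$, hence trivial in $\Gad$, so all such $t_i$ collapse to the single class of the identity and are obviously $\sigma$-fixed.

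This leaves untwisted $E_6$, where $\Sigma=\langle\sigma\rangle$ with $\sigma=-w_0$. Now $\sigma$ interchanges the two label-$1$ nodes (dealt with as above) and the two label-$2$ nodes. For a label-$2$ node, $t_i^2=e(\ch\gamma_i)$ is central in $\Gsc$ and hence trivial in $\Gad$, so $t_i$ is an involution and $t_i^{-1}=t_i$ there. Realising $\sigma$ as $-w_0$ on coweights gives $t_{\sigma(i)}=e(-w_0\ch\gamma_i/2)$, which after conjugation by $w_0$ equals $e(-\ch\gamma_i/2)=t_i^{-1}=t_i$. Hence $t_{\sigma(i)}\sim_{\Gad}t_i$, so $\sigma$ also fixes this class, and $\Sigma$ preserves every $\Omega$-orbit.

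The step I expect to be the main obstacle is exactly this $E_6$ computation. The diagram automorphism of $E_6$ is realised on the coweight lattice as $-w_0$, and the sign carries the $\Gad$-class of $t_i$ a priori to that of $t_i^{-1}$ rather than $t_i$; the comparison closes only because every node actually moved by $\Sigma$ has label $\le 2$, which forces $t_i$ to be an involution in $\Gad$. Confirming this label bound — equivalently, that the unique label-$3$ node of $E_6$ is fixed by $\sigma$ — is the one point that is not purely formal, the rest being the structural identity $\Aut(\Daffine)=\Omega\rtimes\Sigma$ and the collapse of label-$1$ nodes to the identity in $\Gad$.
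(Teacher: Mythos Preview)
Your argument is correct and is, in effect, a structured execution of what the paper leaves as a bare ``case-by-case check''. The paper's own proof is two lines: it observes that the label-$1$ nodes form a single $Z((\Gsc)^\delta)$-orbit (being in bijection with that group), and then simply asserts that the remaining nodes can be handled type by type. You supply an actual organization for that check, via the standard splitting $\Aut(\Daffine)=\Omega\rtimes\Sigma$ with $\Sigma=\mathrm{Stab}(\alpha_0)\hookrightarrow\Aut(\Pi_\delta)$, so that everything reduces to showing $\Sigma$ preserves each $\Omega$-orbit. That reduction is a genuine simplification: it makes the twisted cases and all untwisted types except $E_6$ immediate, and isolates $E_6$ as the only case with content.

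Two small comments. First, your $E_6$ computation through $-w_0$ and the involution $t_i^{-1}=t_i$ in $\Gad$ is valid but heavier than necessary: since $\Omega\simeq\Z/3\Z$ acts on the affine $E_6$ diagram by cyclically rotating the three arms, the three label-$2$ nodes $\{\alpha_2,\alpha_3,\alpha_5\}$ visibly form a single $\Omega$-orbit, and $\sigma$ (which swaps $\alpha_3\leftrightarrow\alpha_5$ and fixes $\alpha_2$) obviously preserves it. Second, your description of $\Omega$ as the subgroup ``acting simply transitively on the label-$1$ nodes'' is the correct picture in the untwisted case but not literally in all twisted cases (for instance, the affine diagram of $^2E_6$ has two nodes of label $1$ while $\Omega$ is trivial there). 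This does not damage your proof, because in every twisted case $\Sigma\subseteq\Aut(\Pi_\delta)=1$ anyway, so the conclusion $\Aut(\Daffine)=\Omega$ still holds; but the phrasing should be adjusted.
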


\begin{proof}
The nodes with label $1$ are in bijection with $Z((\Gsc)^\delta)$,
so the action of $Z((\Gsc)^\delta)$ on these nodes is simply
transitive and the result is immediate.
The remaining nodes follow from a case--by--case check.
\end{proof}

\begin{proposition}
\label{p:fixed}
Suppose $w\delta\in W\delta$ is elliptic, and $n(w\delta)\in G\delta$ is a representative of $w$.
Then the Kac diagram of $n(w\delta)$ is fixed by $\Aut(\Daffine)$.
\end{proposition}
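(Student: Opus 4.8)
The plan is to prove that the Kac diagram $\mathcal D$ of $n(w\delta)$ is \emph{constant on the orbits of} $\Aut(\Daffine)$ acting on the nodes of $\Daffine$, which is exactly the assertion that $\mathcal D$ is fixed by $\Aut(\Daffine)$. The main leverage will be Lemma \ref{6.7}: since the orbits of $Z((\Gsc)^\delta)$ and of $\Aut(\Daffine)$ on the nodes coincide, it suffices to show that $\mathcal D$ is fixed by the action of $Z((\Gsc)^\delta)$ on Kac diagrams described just before Lemma \ref{6.7}. Because Kac diagrams are independent of isogeny, I would carry out the whole argument in the simply connected cover $\Gsc$, where $\mathcal D$ is the same tuple and where the center acts on Kac diagrams by honest permutations of the nodes. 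Note that this center action is genuinely nontrivial (it is the action by the special automorphisms coming from $\ch P/\ch R$, permuting the mark-$1$ nodes), so fixing $\mathcal D$ is a real constraint and not a formality.

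The heart of the argument is ellipticity, entering through Lemma \ref{l:conjugate}. Fix a representative $n(w\delta)\in p\inv(w\delta)$, and view it in $\Gsc\delta$. For any $z\in Z((\Gsc)^\delta)\subseteq T$ the element $zn(w\delta)$ satisfies $p(zn(w\delta))=w\delta$, so it is again a lift of $w\delta$. Since $w\delta$ is elliptic, Lemma \ref{l:conjugate} forces all such lifts to be $T$-conjugate; in particular $zn(w\delta)\sim n(w\delta)$. Hence left multiplication by $z$ fixes the conjugacy class $[n(w\delta)]$. By the construction of the $Z((\Gsc)^\delta)$-action on Kac diagrams (the map $[g\delta]\mapsto[zg\delta]$ transported through Proposition \ref{p:kac}), this says precisely that $z$ fixes $\mathcal D$. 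As $z$ ranges over $Z((\Gsc)^\delta)$ we conclude that $\mathcal D$ is fixed by the whole center, i.e. $\mathcal D$ is constant on every $Z((\Gsc)^\delta)$-orbit of nodes.

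Assembling the two steps finishes the proof: $\mathcal D$ is constant on the $Z((\Gsc)^\delta)$-orbits of nodes, these are the $\Aut(\Daffine)$-orbits by Lemma \ref{6.7}, and therefore $\mathcal D$ is constant on the $\Aut(\Daffine)$-orbits, which is the desired invariance. It is worth remarking that the power-map results (Proposition \ref{p:g^k}) are \emph{not} needed here; ellipticity alone, via uniqueness of lifts, produces the symmetry, whereas for a non-elliptic $w\delta$ one would have $zn(w\delta)\not\sim n(w\delta)$ in general and the argument would collapse.

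The step I expect to require the most care is the implication ``multiplication by $z$ fixes the class $[n(w\delta)]$'' $\Rightarrow$ ``the permutation $\sigma_z$ fixes the tuple $\mathcal D$.'' This is where the isogeny bookkeeping must be handled honestly: over $\Gsc$ one uses that a Kac diagram is a complete invariant of its conjugacy class (the residual $\Omega$-ambiguity for $\Gsc$ being trivial, or at worst absorbed into the node-permutation action), so that equality of classes upgrades to equality of node-labelings. Making this identification precise — that the center acts by $\sigma_z\in\Aut(\Daffine)$ and that fixing the class is equivalent to $\sigma_z(\mathcal D)=\mathcal D$ — is the only technical point; everything else reduces to citing Lemma \ref{l:conjugate} and Lemma \ref{6.7}.
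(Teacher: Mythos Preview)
Your proposal is correct and follows essentially the same route as the paper: both arguments observe that for $z$ central, $zn(w\delta)$ is another lift of the elliptic element $w\delta$, invoke Lemma~\ref{l:conjugate} to conclude $[zn(w\delta)]=[n(w\delta)]$, and then apply Lemma~\ref{6.7} to upgrade center-invariance to $\Aut(\Daffine)$-invariance. The only cosmetic difference is that you pass to $\Gsc$ at the outset (where $\Omega$ is trivial, so Kac diagrams biject with classes), whereas the paper works in $G$ and notes that $Z((\Gsc)^\delta)$ acts through projection to $Z(G^\delta)$; your handling of the isogeny bookkeeping is arguably cleaner, and your flagged ``delicate step'' is indeed resolved by the observation that $\Omega$ is trivial for $\Gsc$.
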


\begin{proof}
Suppose $w\delta\in W\delta$, with representative
$g\delta\in G\delta$.  If $z\in ZG^\delta$ then $zg\delta$ is also a
representative of $w\delta$, so if $w\delta$ is elliptic
then $[zg\delta]=[g\delta]$.  Therefore the Kac diagram of $[g\delta]$ is
fixed by $Z(G^\delta)$, hence by $Z((\Gsc)^\delta)$ (which acts by
projection), and hence by $\Aut(\Daffine)$ by 
Lemma \ref{6.7}.
\end{proof}

For example in (untwisted) type $A_n$ every node has label $1$, so the
only Kac diagram which is fixed by all automorphisms has all labels
$1$, which corresponds to the Coxeter element. This proves the well-known fact
that this is the the only elliptic conjugacy class in this case.

\sec{Some applications to elliptic conjugacy classes}
In this section, we make a digression and discuss some applications to elliptic conjugacy classes of finite Weyl groups. 

\begin{proposition}\label{stable-d'}
Assume that $W$ is irreducible. Then every elliptic $W$-conjugacy class of $W\delta$ is stable under the diagram automorphisms of $W$
which commute with $\delta$.
\end{proposition}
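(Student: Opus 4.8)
The plan is to reduce the statement to a short list of cases controlled by the structure of $\Out(W)$, and to dispatch everything except untwisted type $D$ by the observation that the relevant diagram automorphisms act on $\Wext$ by \emph{inner} automorphisms, which automatically fix every $W$-conjugacy class contained in $W\delta$. Two reductions do almost all the work. First, if $\sigma\in\langle\delta\rangle$ then $\sigma$ is conjugation by a power of $\delta$ inside $\Wext$, so $\sigma(w\delta)$ is $\Wext$-conjugate to $w\delta$; since $\Wext$-conjugacy and $W$-conjugacy agree on $W\delta$ (Section \ref{s:twisted}), $\sigma$ fixes $[w\delta]$. Second, the opposition $-w_0$ is a diagram automorphism commuting with $\delta$, and the automorphism of $W$ it induces is conjugation by $w_0$; indeed, using $\delta(w_0)=w_0$ one checks $w_0(w\delta)w_0^{-1}=\sigma(w)\delta$ for $\sigma=-w_0$, so $-w_0$ again fixes every class of $W\delta$.

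I would then run through the irreducible types. When $\Out(W)=1$ (types $B_n,C_n,E_7,E_8,F_4,G_2,A_1$) there is nothing to prove. When $\delta\neq1$, a direct inspection shows that in every irreducible case the diagram automorphisms commuting with $\delta$ are exactly $\langle\delta\rangle$ (the centralizer of $\delta$ in $\Out(W)\in\{\mathbb Z/2,S_3\}$); in particular this covers $D_4$ with $\delta$ of order $3$, whose centralizer in $S_3$ is $\langle\delta\rangle$. Hence the first reduction settles the entire twisted case. For $\delta=1$ with $\Out(W)\neq1$, the types $A_n$, $D_n$ with $n$ odd, and $E_6$ have $\Out(W)=\langle-w_0\rangle$, so the second reduction applies. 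The only surviving cases are the untwisted type $D_n$ (including $D_4$), where $-1\in W$ forces $-w_0=1$ and the second reduction is vacuous.

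The main obstacle, and the one genuinely computational point, is thus untwisted type $D_n$. Here I would invoke the classical description of $W(D_n)\subset W(B_n)$ by signed cycle type $(\lambda,\mu)$ together with the splitting criterion: a $W(B_n)$-class lying in $W(D_n)$ splits into two $W(D_n)$-classes if and only if $\mu=\emptyset$ and all parts of $\lambda$ are even. An element is elliptic precisely when it has no eigenvalue $1$ on the reflection representation, i.e. when $\lambda=\emptyset$; then $\mu\neq\emptyset$, so the splitting criterion fails and the class coincides with its full $W(B_n)$-class. Since the order-$2$ diagram automorphisms of $D_n$ are induced by conjugation by elements of $W(B_n)\setminus W(D_n)$, every elliptic class is stable under them. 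For the triality of $D_4$ I would note there are exactly three elliptic classes, of signed types $(3,1)$, $(2,2)$ and $(1^4)$, the last being the central element $-1$; an order-$3$ automorphism fixes $-1$ and permutes the remaining two classes, and any action of order dividing $3$ on a two-element set is trivial, so all three are triality-stable. Combined with the order-$2$ case (which fixes them by the splitting argument applied to the standard flip and its triality conjugates), this yields stability under all of $S_3$.

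As an independent check I would observe that the conclusion is also consistent with Section \ref{s:kac}: extending $\sigma$ to a pinning-preserving $\widetilde{\sigma}\in\Aut(G)$ commuting with $\delta$, it acts on the affine diagram $\Daffine$, and by Proposition \ref{p:fixed} the Kac diagram of a lift of an elliptic $w\delta$ is fixed by $\Aut(\Daffine)$, whence $\Psi(\sigma[w\delta])=\widetilde{\sigma}(\Psi[w\delta])=\Psi[w\delta]$ via Theorem \ref{t:main}. I would not base the proof on this route, however, since deducing $[\sigma(w\delta)]=[w\delta]$ from it would require injectivity of $\Psi$ on elliptic classes, which is not established; the case reduction above avoids this gap entirely.
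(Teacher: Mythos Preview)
Your argument is correct, and it proceeds by a genuinely different route from the paper's. The paper deduces the proposition directly from Proposition~\ref{p:fixed}: a diagram automorphism $\tau$ commuting with $\delta$ lifts to a pinning-preserving automorphism of the simply connected $G$ and therefore acts on $\Daffine$; since the Kac diagram of any lift $n(w\delta)$ of an elliptic element is fixed by the whole of $\Aut(\Daffine)$, it is in particular fixed by $\tau$, so $[\tau(n(w\delta))]=[n(w\delta)]$; the paper then invokes the equivalence $[\tau(w\delta)]=[w\delta]\Longleftrightarrow[\tau(n(w\delta))]=[n(w\delta)]$ to conclude. You are right that the nontrivial direction of this last equivalence is precisely the injectivity of $\Psi$ on elliptic classes, and that the paper states it without proof; your caution on this point is reasonable.

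Your route avoids Section~\ref{s:kac} altogether. Two reductions (automorphisms lying in $\langle\delta\rangle$, and the opposition $-w_0$, each act by conjugation inside $\Wext$) dispose of every irreducible type except untwisted $D_n$ with $n$ even; there you use the classical signed-cycle description and the $B_n/D_n$ splitting criterion, plus a short orbit-counting argument for the $D_4$ triality. The trade-off is clear: the paper's proof is uniform and conceptual but leans on the Kac-diagram machinery and on the unproved injectivity of $\Psi$; yours is self-contained and elementary but retains a residual case analysis. Note that the remark following the proposition makes explicit that the paper's aim here is to replace the case-based arguments of \cite{geck_pfeiffer} and \cite{CH} by a case-free one, so while your treatment is much lighter than those earlier proofs, it is not the uniform argument the paper is advertising.
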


\begin{remark}
  In fact, the result is true for any finite Coxeter group (see \cite{geck_pfeiffer}*{Theorem 3.2.7} when $\d=id$ and
  \cite{he_minimal_length_double_cosets}*{Theorem 7.5} in general). The original proof (even for finite Weyl groups) is based on a characterization of elliptic conjugacy classes via the characteristic polynomials and length functions. Such characterization is established via a laborious case-by-case analysis on the elliptic conjugacy classes with the aid of computer for exceptional groups. Now we give a general proof for finite Weyl groups via the Kac diagram. 
\end{remark}

\begin{proof}
  This follows easily from Proposition \ref{p:fixed}. If $\tau$ is an
  automorphism of $W$, commuting with $\delta$, it induces an
  automorphism of $\Wext$, preserving $W\delta$. Let $G$ be the corresponding simply connected group.
  Then $\tau$ lifts to an automorphism, also denoted $\tau$, of $G$.
  If $w\delta\in W\delta$ is elliptic, so is $\tau(w\delta)$, and
  $[\tau(w\delta)]=[w\delta]$ if and only if $[\tau(n(w\delta))]=[n(w\delta)]$, where $n(w\delta)$ is a representative
  in $G\delta$ of $w\delta$, i.e. $\tau(n(w\delta))$ and $n(w\delta)$ have the same Kac diagram.
  Now the result follows from Proposition \ref{p:fixed}.
\end{proof}

Proposition \ref{stable-d'} is used in an essential way to prove that in finite Weyl groups, elliptic conjugacy classes never fuse. 

\begin{theorem}\label{fuse}
	Let $W$ be a finite Weyl group and $\mathcal O$ be a $W$-conjugacy class of $W \d$. Let $J \subset I$ with $\d(J)=J$ and $\mathcal O \cap W_J \d$ contains an elliptic element of $W_J \d$. Then $\mathcal O \cap W_J \d$ is a single conjugacy class of $W_J$. 
\end{theorem}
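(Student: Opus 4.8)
The plan is to reduce the $W$-fusion of elliptic elements of $W_J\delta$ to the action of diagram automorphisms of $(W_J,\Pi_J)$ and then to kill that action by Proposition~\ref{stable-d'}. First I would check that \emph{every} element of $\mathcal O\cap W_J\delta$ is elliptic in $W_J\delta$, not just the one we are handed. Write $V=V_J\oplus V_J^{\perp}$ with $V_J=\Q\langle\Delta_J\rangle$; both summands are $\delta$-stable (since $\delta(J)=J$) and stable under every $w\in W_J$, and $w$ acts trivially on $V_J^{\perp}$. Hence for $w\delta\in W_J\delta$ we have $V^{w\delta}=V_J^{w\delta}\oplus (V_J^{\perp})^{\delta}$. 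Two $W$-conjugate elements have equal-dimensional fixed spaces on $V$, and the summand $(V_J^{\perp})^{\delta}$ does not depend on $w$; so if $\mathcal O\cap W_J\delta$ contains $w_0\delta$ with $V_J^{w_0\delta}=0$, then $V_J^{w\delta}=0$ for all $w\delta\in\mathcal O\cap W_J\delta$. Thus all these elements are elliptic (cuspidal) in $W_J\delta$, which is exactly the hypothesis needed to apply Proposition~\ref{stable-d'}.

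The key structural input, and the step I expect to be the main obstacle, is that the $W$-fusion of cuspidal elements of $W_J\delta$ is realized inside the normalizer $N_W(W_J)$: if $w\delta,w'\delta\in W_J\delta$ are elliptic in $W_J\delta$ and $g(w\delta)g^{-1}=w'\delta$ for some $g\in W$, then $g$ may be chosen in $N_W(W_J)$. In the untwisted case ($\delta=1$) this is elementary: ellipticity gives $V^{w}=V_J^{\perp}$, so $g(V_J^{\perp})=V^{w'}=V_J^{\perp}$, whence $g$ preserves $V_J$ and therefore $\Delta\cap V_J=\Delta_J$, i.e. $g\in N_W(W_J)$. In the twisted case $V_J$ is no longer recovered from the fixed space of $w\delta$ alone, and I would instead invoke the theory of cuspidal classes and minimal length elements in extended Coxeter groups (\cite{geck_pfeiffer}, \cite{he_minimal_length_double_cosets}, \cite{he_nie_minimal_finite}) to place the conjugating element in $N_W(W_J)$.

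Granting this, I would finish using Howlett's description $N_W(W_J)=W_J\rtimes N_J$, where $N_J=\{x\in W\mid x(\Pi_J)=\Pi_J\}$ and $\delta(N_J)=N_J$ since $\delta(J)=J$. Writing $g=ab$ with $a\in W_J$ and $b\in N_J$, and absorbing $a$ into $W_J$-conjugacy, it suffices to treat $g=b\in N_J$. A direct computation in $\Wext$ gives $b(w\delta)b^{-1}=\big(b\,w\,\delta(b)^{-1}\big)\delta=(bwb^{-1})\,(b\,\delta(b)^{-1})\,\delta$, where $bwb^{-1}\in W_J$ and $b\,\delta(b)^{-1}\in N_J$. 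Since $b(w\delta)b^{-1}\in W_J\delta$, the uniqueness of the decomposition $W_J\rtimes N_J$ forces $b\,\delta(b)^{-1}\in W_J\cap N_J=\{1\}$, i.e. $\delta(b)=b$. Consequently $b(w\delta)b^{-1}=\tau_b(w)\delta=\tau_b(w\delta)$, where $\tau_b$ is the diagram automorphism of $(W_J,\Pi_J)$ induced by conjugation by $b$, and $\delta(b)=b$ says precisely that $\tau_b$ commutes with $\delta$.

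It then remains to apply Proposition~\ref{stable-d'} (in the general finite Coxeter form of the Remark following it, so as to cover a possibly reducible $W_J$): the elliptic class of $w\delta$ in $W_J\delta$ is stable under the diagram automorphism $\tau_b$ commuting with $\delta$, so $\tau_b(w\delta)\sim_{W_J}w\delta$, and therefore $w'\delta\sim_{W_J}w\delta$. Since $w\delta,w'\delta$ were arbitrary in $\mathcal O\cap W_J\delta$, this set is a single $W_J$-conjugacy class. The only nontrivial ingredient beyond this paper is the cuspidal-fusion statement of the second paragraph; everything else is the fixed-space count, Howlett's normalizer splitting, and the bookkeeping that the relevant automorphism commutes with $\delta$.
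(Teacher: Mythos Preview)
Your proposal is correct and follows essentially the same strategy as the paper's sketch: reduce the $W$-fusion of elliptic elements of $W_J\delta$ to the action of diagram automorphisms of $(W_J,\Pi_J)$ commuting with $\delta$, and then finish with Proposition~\ref{stable-d'}. The paper does not give a self-contained proof either; it cites \cite{geck_pfeiffer} and \cite{CH} for the reduction steps and only points out that Proposition~\ref{stable-d'} replaces the old ``characterization of elliptic classes'' in the final step.

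A few differences worth noting. Your use of Howlett's splitting $N_W(W_J)=W_J\rtimes N_J$ and the computation forcing $\delta(b)=b$ is a clean, explicit way to carry out what the paper simply asserts (``the different $W_J$-conjugacy classes in $\mathcal O\cap W_J\delta$ are obtained from one another by diagram automorphisms of $W_J$''). Your self-contained argument in the untwisted case---recovering $V_J^{\perp}$ as the fixed space and hence placing the conjugator in $N_W(W_J)$---is exactly right, and your honest flag that the twisted case needs more (because $V^{w\delta}=(V_J^{\perp})^{\delta}$ no longer determines $V_J^{\perp}$) is accurate; this is precisely the point where both the paper and you lean on \cite{geck_pfeiffer}, \cite{he_minimal_length_double_cosets}, \cite{CH}. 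Finally, the paper first reduces to $W$ and $W_J$ irreducible so that the irreducible form of Proposition~\ref{stable-d'} applies directly, whereas you bypass that reduction by invoking the general finite-Coxeter statement recorded in the Remark; either route is fine. Your preliminary step showing \emph{all} elements of $\mathcal O\cap W_J\delta$ are elliptic, via the fixed-space dimension count, is a useful clarification that the paper leaves implicit.
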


This result was first proved in \cite{geck_pfeiffer}*{Theorem 3.2.11} when $\d=id$ and in
\cite{CH}*{Theorem 2.3.4} in general. The strategy is to first reduce to the case where $W$ is irreducible, then to reduce to the case where $W_J$ is irreducible. Note that the different $W_J$-conjugacy classes in $\mathcal O \cap W_J \d$ are obtained from one another by diagram automorphisms of $W_J$. The final (and crucial) step in \cite{geck_pfeiffer} and \cite{CH} was to use the characterization of elliptic conjugacy classes to deduce that the intersection is a single $W_J$-conjugacy class. Now the final step may be replaced by Proposition \ref{stable-d'}, the proof of which is simpler than the characterization of elliptic conjugacy classes. 

\sec{Non-Elliptic elements}
\label{s:nonelliptic}

For $w \in W$, we denote by $\text{supp}(w)$ the support of $w$, i.e., the set of simple reflections that occur in some (or equivalently, any) reduced expression of $w$. We define $$\text{supp}(w\delta): =\bigcup_{i \in \mathbb Z} \delta^i(\text{supp}(w)).$$ 

By \cite{geck_pfeiffer}*{\S 3.1} and
\cite{he_minimal_length_double_cosets}*{\S 7}, a conjugacy class of $\Wext$ is
elliptic if and only if it does not intersect with
$\Wext_J=W_J \rtimes \langle \delta \rangle$ for any proper
$\delta$-stable subset $J$ of $\Pi$, in other words,
$\text{supp}(w)=\Pi$ for any $w$ in the conjugacy class.

We have the following result (see \cite{geck_pfeiffer}*{Corollary
3.1.11} for untwisted conjugacy classes and \cite{CH}*{Proposition
2.4.1} in the general case).

\begin{proposition}\label{x-delta}
	Let $w_1, w_2 \in W\delta$ be minimal length elements in the same conjugacy class. Let $J_i=\text{supp}(w_i)$ for $i=1,2$. Then there exists $x \in {}^{J_2} W^{J_1} \cap W^\delta$ with $x J_1 x^{-1}=J_2$. 
\end{proposition}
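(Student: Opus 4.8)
The plan is to reduce the statement to the combinatorics of minimal length elements and finish with a double–coset cleanup. The key input is the theory of minimal length elements in (twisted) conjugacy classes (\cite{geck_pfeiffer}, \cite{he_nie_minimal_finite}, \cite{CH}): since $w_1$ and $w_2$ are both of minimal length in the same class $\mathcal O\subseteq W\delta$, they are linked by a chain $w_1=v_0,v_1,\dots,v_m=w_2$ in which consecutive terms differ by a length–preserving elementary conjugation. The moves that \emph{change} the support are, up to symmetrizing over $\langle\delta\rangle$, twisted conjugations by the longest element $w_K$ of a standard parabolic $W_K$ with $K$ a $\delta$-stable subset of $\Pi$ containing $\text{supp}(v_{i-1})$; the moves that preserve the support act trivially at the level of supports and may be ignored. (That genuine parabolic conjugators, not just simple reflections, are needed is already visible in type $A_2$, where $s_1\mapsto w_0s_1w_0=s_2$.)

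The heart of the argument is to track the support through one support-changing step. Fix such a step, write $v=v_{i-1}=u\delta$ with $\text{supp}(u)\subseteq K$. Since $K$ is $\delta$-stable, $\delta$ preserves $W_K$ and fixes its longest element, so $\delta(w_K)=w_K$ and the step reads $v_i=(w_K\,u\,w_K)\delta=\sigma_K(u)\delta$, where $\sigma_K$ is the diagram automorphism of $(W_K,K)$ induced by conjugation by $w_K$ (the opposition involution $-w_0^K$ on $K$). As $\sigma_K$ permutes the generating set $K$, it carries $\text{supp}(u)$ onto $\sigma_K(\text{supp}(u))\subseteq K\subseteq\Pi$, and hence
$$
\text{supp}(v_i)=w_K\,\text{supp}(v_{i-1})\,w_K^{-1}\subseteq\Pi,\qquad w_K\in W^\delta .
$$
Composing these identities along the chain (inserting the identity for support-preserving steps), the product $g$ of the relevant $w_K$'s lies in $W^\delta$, being a product of $\delta$-fixed elements, and satisfies $g\,J_1\,g^{-1}=J_2$, whence $g\,W_{J_1}\,g^{-1}=W_{J_2}$.

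It remains to pass from $g$ to a minimal representative. Let $x$ be the unique element of minimal length in the double coset $W_{J_2}\,g\,W_{J_1}$, so $x\in{}^{J_2}W^{J_1}$. From $g\,W_{J_1}\,g^{-1}=W_{J_2}$ and $x\in W_{J_2}gW_{J_1}$ we get $x\,W_{J_1}\,x^{-1}=W_{J_2}$, so $x(\Delta_{J_1})=\Delta_{J_2}$; combined with $x\in W^{J_1}$, which forces $x(\Delta_{J_1}^+)\subseteq\Delta^+$, this sends the positive system of $\Delta_{J_1}$ onto $\Delta_{J_2}^+$, i.e. $x\,J_1\,x^{-1}=J_2$. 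Finally, because $J_1,J_2$ are $\delta$-stable and $g\in W^\delta$, the double coset $W_{J_2}gW_{J_1}$ is $\delta$-stable, and since $\delta$ preserves length its unique minimal element is fixed: $x\in W^\delta$. This yields the required $x\in{}^{J_2}W^{J_1}\cap W^\delta$ with $x\,J_1\,x^{-1}=J_2$.

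The step I expect to be the main obstacle is the support-tracking reduction: one must extract from the strong-conjugacy theorem of \cite{geck_pfeiffer} and \cite{CH} the sharpened statement that the \emph{support-changing} conjugations connecting minimal length elements of $\mathcal O$ can be realized by longest elements $w_K$ of $\delta$-\emph{stable} parabolics (so that each such conjugator is automatically $\delta$-fixed and acts on the ambient support as a diagram automorphism). Establishing this $\delta$-equivariance of the conjugators is exactly what makes the composition land in $W^\delta$, and it is the only point where the twisted setting requires care beyond the untwisted case treated in \cite{geck_pfeiffer}; the uniqueness statement for the minimal parabolic ($\mathcal O\cap W_J\delta$ is a single class, Theorem \ref{fuse}) is the complementary fact that the target $J$ is well defined up to this conjugacy.
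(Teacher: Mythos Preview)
The paper does not supply its own proof of this proposition; it is quoted from \cite{geck_pfeiffer}*{Corollary~3.1.11} (untwisted case) and \cite{CH}*{Proposition~2.4.1} (twisted case). So the comparison is with the argument in those references.

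Your proposal has a genuine gap, and it is exactly the one you flag at the end. The strong-conjugacy theorems you invoke do \emph{not} assert that any two minimal-length elements of a class are linked by a chain of length-preserving elementary conjugations: already in untwisted $A_2$ the two minimal reflections $s_1,s_2$ admit no length-preserving simple-reflection move between them (since $\ell(s_is_js_i)=3$), as you yourself observe. Your remedy is to decree that the support-changing steps may be taken to be conjugation by $w_K$ for a $\delta$-stable $K\supseteq\text{supp}(v_{i-1})$; but the existence of a chain built from such moves together with cyclic shifts is precisely what would have to be proved, and nothing in \cite{geck_pfeiffer}, \cite{he_nie_minimal_finite} or \cite{CH} furnishes it in that form. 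Without it you have not produced the $\delta$-fixed conjugator $g$ with $gJ_1g^{-1}=J_2$ that your cleanup step takes as input.

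The argument in the cited references avoids chains altogether and, in effect, runs your cleanup on an \emph{arbitrary} conjugator. Pick any $g\in W$ with $gw_1g^{-1}=w_2$; writing $w_i=u_i\delta$ this reads $gu_1\delta(g)^{-1}=u_2$. Let $x\in{}^{J_2}W^{J_1}$ be minimal in $W_{J_2}gW_{J_1}$ and write $g=axb$ with $a\in W_{J_2}$, $b\in W_{J_1}$. A short rearrangement gives $x\,u_1'=u_2'\,\delta(x)$ with $u_1':=bu_1\delta(b)^{-1}\in W_{J_1}$ and $u_2':=a^{-1}u_2\delta(a)\in W_{J_2}$; hence $W_{J_2}xW_{J_1}=W_{J_2}\delta(x)W_{J_1}$, and uniqueness of the minimal representative forces $\delta(x)=x$. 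Now $xu_1'x^{-1}=u_2'$ lies in $W_{J_2}\cap xW_{J_1}x^{-1}=W_L$ (Kilmoyer) with $L\subseteq J_2$ a $\delta$-stable subset. Since $u_2'\delta=a^{-1}w_2a$ is an \emph{elliptic} element of $W_{J_2}\delta$ (this is the standard consequence of minimality of $\ell(w_2)$ in the Geck--Pfeiffer theory, and is where your hoped-for ``support-tracking'' is replaced by a one-line appeal), we must have $L=J_2$, i.e.\ $W_{J_2}\subseteq xW_{J_1}x^{-1}$. The symmetric argument with $x^{-1}$ gives the reverse inclusion, and then $x\in W^{J_1}$ forces $xJ_1x^{-1}=J_2$.

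In short: your double-coset cleanup is the right engine, but it should be fed an arbitrary conjugator of $w_1$ to $w_2$, with ellipticity of $w_i$ in $W_{J_i}\delta$ doing the work you tried to outsource to the unestablished chain of $w_K$-moves.
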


Consider the set $\mathcal P_{\delta}$ of pairs $(J, D)$, where $J \subset \Pi$ is a $\delta$-stable subset and $D \subset W_J \delta$ is an elliptic conjugacy class of $\Wext_J$. The equivalence relation on $\mathcal P_{\delta}$ is defined by $(J, D) \sim (J', D')$ if there exists $x \in {}^{J'} W^J \cap W^\delta$ such that $x J x^{-1}=J'$ and $x D x^{-1}=D'$. 

Combining Theorem \ref{fuse} with Proposition \ref{x-delta}, we have 

\begin{theorem}\label{cal-P}
	The map $$C \mapsto \{(J, C \cap W_J \delta)\mid J=\text{supp}(w) \text{ for some } w \text{ of minimal length in } C\}$$ induces a bijection from $[W\delta] \to \mathcal P_{\delta}/\sim$. 
\end{theorem}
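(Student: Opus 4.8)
The plan is to show that the assignment $\Phi\colon C\mapsto [(J,\,C\cap W_J\delta)]$, where $J=\text{supp}(w)$ for a minimal length element $w\in C$, is a well-defined bijection onto $\mathcal P_\delta/\!\sim$. I would split the argument into well-definedness, injectivity, and surjectivity, with Theorem \ref{fuse} and Proposition \ref{x-delta} as the two main inputs, together with the characterization of elliptic classes recalled above. Throughout I use that $W$-conjugacy and $\Wext$-conjugacy agree on $W\delta$.

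First I would establish well-definedness. Given $C$ and a minimal length $w\in C$ with $J=\text{supp}(w)$, the element $w$ lies in $W_J\delta$ and has full support there, so it is elliptic in $\Wext_J$; hence by Theorem \ref{fuse} the intersection $C\cap W_J\delta$ is a single elliptic conjugacy class $D$ of $\Wext_J$, and $(J,D)\in\mathcal P_\delta$. For two minimal length elements $w_1,w_2\in C$ with supports $J_1,J_2$, Proposition \ref{x-delta} furnishes $x\in{}^{J_2}W^{J_1}\cap W^\delta$ with $xJ_1x^{-1}=J_2$; since $x\in W$ fixes $C$ and $x\in W^\delta$ gives $x\delta x^{-1}=\delta$ and $xW_{J_1}x^{-1}=W_{J_2}$, conjugation by $x$ carries $C\cap W_{J_1}\delta$ onto $C\cap W_{J_2}\delta$, so $(J_1,D_1)\sim(J_2,D_2)$. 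Thus $\Phi(C)$ is a well-defined element of $\mathcal P_\delta/\!\sim$.

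Injectivity is then formal: if $\Phi(C)=\Phi(C')$, I choose representing pairs $(J,D)$ and $(J',D')$ with $x\in{}^{J'}W^J\cap W^\delta$ satisfying $xJx^{-1}=J'$ and $xDx^{-1}=D'$. Picking $w\in D\subseteq C$ gives $xwx^{-1}\in D'\subseteq C'$, while $xwx^{-1}\in xCx^{-1}=C$ since $x\in W$; hence $C\cap C'\neq\emptyset$ and $C=C'$.

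For surjectivity, given $(J,D)\in\mathcal P_\delta$ I would take $w\in D$ of minimal length in the $\Wext_J$-class $D$ and set $C=[w]$. The crucial point, and the step I expect to be the main obstacle, is that such a $w$ is also of minimal length in its ambient class $C$: this is the persistence statement that minimal length inside the parabolic elliptic class forces minimal length in the full class, which is part of the minimal length element theory of \cite{geck_pfeiffer}, \cite{he_minimal_length_double_cosets} and \cite{CH} (and is what makes $J$ the support of a genuine minimal length representative of $C$). Granting it, $\text{supp}(w)=J$, and Theorem \ref{fuse} identifies $C\cap W_J\delta=D$ as the unique elliptic class through $w$, so $\Phi(C)=[(J,D)]$. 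I expect the only real content beyond bookkeeping to be this persistence input, since everything else reduces to Theorem \ref{fuse} (each slice $C\cap W_J\delta$ is a single class) and Proposition \ref{x-delta} (the supports of minimal length elements form a single $W^\delta$-orbit).
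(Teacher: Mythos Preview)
Your proposal is correct and follows the same line the paper indicates: the paper does not give a standalone proof but simply says the theorem follows by combining Theorem~\ref{fuse} and Proposition~\ref{x-delta}, referring to \cite{geck_pfeiffer}*{Theorem 3.2.12} for the untwisted case and remarking that the twisted case is similar. Your well-definedness and injectivity arguments are exactly the bookkeeping those two inputs provide, and you have correctly isolated the one extra ingredient needed for surjectivity---that a minimal length element of an elliptic $\Wext_J$-class remains of minimal length in its ambient $W$-class---which the paper absorbs into its reference to \cite{geck_pfeiffer} and \cite{CH}.
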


For untwisted conjugacy classes, the statement is obtained by Geck and
Pfeiffer in \cite{geck_pfeiffer}*{Theorem 3.2.12}. The general case is
proved in a similar way.

Let $C\in [\Wext]$ and $w \in C$. In general the lifts of $w$ to $\Gext$ are not $G$-conjugate. However there is a reasonable canonical choice of this lifting, defined as follows.

Without loss of generality, we assume that $C \subset W \delta$. Let $(J, D) \in \mathcal P_{\delta}$ be an element corresponds to $C$. Let $L_J$ be the standard Levi subgroup corresponds to $J$.  Apply the algorithm of Section \ref{const} to $({}^\delta L_J, D)$ to construct a conjugacy class $\Psi_J(D)$ in ${}^\delta L_J$, and thus (by acting by $G$) a conjugacy class $\widetilde{\Psi_J(D)}$ in $\Gext$.

\begin{proposition} The map $$[W \delta] \to [\Gext^{\ss}], \qquad C \mapsto \widetilde{\Psi_J(D)}$$ is well-defined. 
\end{proposition}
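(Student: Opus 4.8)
The plan is to prove independence of all choices by reducing everything to a single conjugation. By Theorem \ref{cal-P} the conjugacy class $C$ determines a unique $\sim$-equivalence class in $\mathcal P_{\delta}$, and the construction of $\Psi_J(D)$ from a fixed pair $(J,D)$ is itself unambiguous by Theorem \ref{t:main}. Hence the only ambiguity in $\widetilde{\Psi_J(D)}$ is the choice of representative $(J,D)$ of the equivalence class attached to $C$, and it suffices to show that $(J,D)\sim(J',D')$ forces $\widetilde{\Psi_J(D)}=\widetilde{\Psi_{J'}(D')}$.

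First I would reinterpret the construction in a form convenient for comparison. Since $D$ is an elliptic conjugacy class of $\Wext_J$ contained in $W_J\delta$, Definition \ref{d:basictwisted} together with Theorem \ref{t:main}, applied inside $L_J$, identify $\Psi_J(D)$ with the $L_J$-conjugacy class of a lift $n_w\in N_{L_J}(T)\delta$ of any $w\in D$; this class is well defined by Lemma \ref{l:conjugate} applied to the derived group of $L_J$, using that $w$ is elliptic for $W_J$. Consequently $\widetilde{\Psi_J(D)}$ is nothing but the $\Gext$-conjugacy class $[\,n_w\,]$ of this lift.

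Next I would exploit the witness $x\in{}^{J'}W^J\cap W^\delta$ with $xJx^{-1}=J'$ and $xDx^{-1}=D'$. Choose a lift $n(x)\in N(T)$. Because $x\in W^\delta$, the element $n(x)\delta(n(x))^{-1}\in T$, so conjugation by $n(x)$ sends the coset $L_J\delta$ onto $L_{J'}\delta$ and $N_{L_J}(T)\delta$ onto $N_{L_{J'}}(T)\delta$. Writing $w=u\delta$ with $u\in W_J$, one has $xwx^{-1}=xux^{-1}\delta\in W_{J'}\delta$, which is the element of $D'$ corresponding to $w$, and $n(x)n_wn(x)^{-1}$ is a lift of it lying in $N_{L_{J'}}(T)\delta$. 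By the reinterpretation of the previous paragraph applied to $(J',D')$ this gives $\Psi_{J'}(D')=[\,n(x)n_wn(x)^{-1}\,]_{L_{J'}}$, whence
\[
\widetilde{\Psi_{J'}(D')}=[\,n(x)n_wn(x)^{-1}\,]_{\Gext}=[\,n_w\,]_{\Gext}=\widetilde{\Psi_J(D)}.
\]

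The step I expect to require the most care is the twisted equivariance of conjugation by $n(x)$: one must check that conjugation by $n(x)$ transports the whole package $(L_J,\delta|_{L_J},D)$ to $(L_{J'},\delta|_{L_{J'}},D')$ as $\delta$-twisted data, so that $n(x)n_wn(x)^{-1}$ genuinely represents $\Psi_{J'}(D')$ and not some other lift. This is precisely where the hypothesis $x\in W^\delta$ is used, guaranteeing that $n(x)\delta n(x)^{-1}=t\delta$ with $t\in T\subset L_{J'}$, so that the $\delta$-coset structure is preserved under the conjugation. Everything else is the bookkeeping of Weyl-group lifts already recorded in Section \ref{s:general}.
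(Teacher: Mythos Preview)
Your argument is correct and follows essentially the same route as the paper: reduce to two equivalent pairs $(J,D)\sim(J',D')$, lift the conjugating element $x\in{}^{J'}W^{J}\cap W^{\delta}$ to $N(T)$, and use that conjugation by this lift carries a representative of $\Psi_J(D)$ to a representative of $\Psi_{J'}(D')$. The only cosmetic difference is that the paper invokes the Tits section $\sigma:W\to N(T)$ (which satisfies $\delta(\sigma(x))=\sigma(\delta(x))=\sigma(x)$ exactly, so the correction factor $t$ vanishes), whereas you take an arbitrary lift $n(x)$ and absorb the resulting $t=n(x)\delta(n(x))^{-1}\in T$ into $L_{J'}$; both versions work.
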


\begin{proof}
	Let $(J, D), (J', D')$ be elements in $\mathcal P_{\delta}$ that correspond to $C$. By Theorem \ref{cal-P}, there exists $x \in {}^{J_2} W^{J_1} \cap W^\delta$ with $x J_1 x^{-1}=J_2$. As discussed in \cite{AH}*{Section 2} the Tits group provides a section $\sigma:W\rightarrow N(T)$ satisfying
	$\delta(\sigma(w))=\sigma(\delta(w))$. In particular, $\sigma(x)$ is $\delta$-stable. Since $x J x^{-1}=J'$, we have $\sigma(x) L_J \delta(\sigma(x))^{-1}=\sigma(x) L_J \sigma(x)^{-1}=L_{J'}$. Since $x D x^{-1}=D'$, we have $\sigma(x) \Psi_J(D) \sigma(x)^{-1}=\Psi_{J'}(D')$. Hence $\widetilde{\Psi_J(D)}=\widetilde{\Psi_{J'}(D')}$. 
\end{proof}

\sec{Tables}
\label{s:tables}

For each exceptional group we list representatives of the elliptic conjugacy classes in $W$,
their Kac diagrams, and some other information.

We use the Bourbaki numbering of the simple roots \cite{bourbaki_4-6}.
Each table is preceded by the affine Dynkin diagram with the labels of the nodes.

\begin{enumerate}
\item Name: name of the elliptic conjugacy class, as in \cite{carter_conjugacy_classes} and \cite{geck_pfeiffer}.
\item d: order of the elements in the conjugacy classes.
\item Kac diagram: with respect to the given affine Dynkin diagram.
\item Centralizer: type of the derived group of the centralizer of the nilpotent element.
\item good: $w^d$ in the braid monoid (see Theorem \ref{t:braid}). Here $\Delta_S$ is the long element of the
  Weyl group $W_S$ of the Levi factor defined by $S$, and $\Delta$ is the long element of the Weyl group of $W$.
\end{enumerate}

These tables were computed using the algorithm of Section
\ref{const}.

Alternatively one can compute the Kac diagram in many cases using
standard techniques, starting with the result for regular elements.
The remaining cases require a number of case-by-case arguments, for
example see \cite{rgly}*{Section 8}. This is how the tables in
\cite{bouwknegt}, \cite{rgly}, and \cite{levy_exceptional} were
computed

\begin{exampleplain}
Consider the conjugacy class $E_8(a_7)$ of  $W(E_8)$
\cite{geck_pfeiffer}*{Table B.6}. We take the following
representative
$$
w=2343654231435426543178
$$
of order $d=12$ and length $22$. Let $\zeta$ be a primitive $12^{th}$ root of unity.
The eigenvalues of $w$ are $\{\zeta^k\mid k=1,2,5,7,10,11\}$.
The dimension of the eigenspaces are $1,2,1,1,2,1$, respectively.
In the notation of the algorithm we have
$$
\Gamma_w=\{\theta_1,\theta_2,\theta_3\}=\{2\pi /12, 4\pi /12, 10\pi /12\}=\frac{2\p}{12}*\{1,2,5\}.
$$
Note that
$$
\frac{d(\theta_2-\theta_1)}{2\pi}=\frac{d(\theta_1-\theta_0)}{2\pi}=1.
$$
We have
$$
0=F_0\subset F_1\subset F_2\subset F_3=V,
$$
where the $F_i$ have dimensions $0,2,6$ and $8$, respectively.

In particular $F_1=V(w,2\pi/12)$ is two-dimensional.  The set $\Delta_1$ of roots vanishing on this space
is a standard Levi subgroup of type $D_4$, with simple roots $\{2,3,4,5\}$.
It turns out that $\Delta_2=\emptyset$, so we have

  $$
  \Delta=\Delta_0=E_8\supset \Delta_1=D_4\supset \Delta_2=\Delta_3=\emptyset.
$$

The algorithm gives the following elements in turn:
$$
\ch\lambda_3=\ch\lambda_2=0,
$$
$$
\ch\lambda_1=\frac{d(\theta_2-\theta_1)}{2\pi}\ch\rho_1=\ch\rho_1.
$$
Next find $w\in W(\Delta_0)=W$ so that $w\ch\rho_1$ is dominant, and then set
$$
\ch\lambda_0=\frac{d(\theta_1-\theta_0)}{2\pi}\ch\rho+w\ch\rho_1=\ch\rho+w\ch\rho_1.
$$
In fundamental weight coordinates we have
$$
\begin{aligned}
  \ch\rho_1&=(-3,1,1,1,1,-3,0,0),\\
  w\ch\rho_1&=(0,0,0,0,0,0,1,1),\\
  \ch\lambda_0=\ch\rho+w\ch\rho_1&=  ( 1,1,1,1,1,1,2,2),\\
  \ch\lambda_0/12&=(  1,1,1,1,1,1,2,2)/12.
\end{aligned}
$$
This element is dominant but not in the fundamental alcove; its affine coordinates are
$$
(1,1,1,1,1,1,2,2,-22)/12.
$$
Applying the affine Weyl group takes this to the element $(0,0,1,0,1,0,0,1)/12$,
or affine coordinates  $(0,0,1,0,1,0,0,1,1)/12$. The corresponding  Kac coordinates are therefore $(0,0,1,0,1,0,0,1,1)$.
Note that the sum of the coefficients times the corresponding labels is $1*4+1*5+1*2+1*1=12$.
See the corresponding line in the $E_8$ table. 
Compare \cite{rgly}*{Section 8}.
\end{exampleplain}

\begin{remarkplain}
In \cite{geck_pfeiffer}*{Table B.6} there is a different
representative for this conjugacy class.
Although this representative is good,
it turns out the positive chamber is not in good position for this element
(both ``good'' and ``good position'' are defined in Section \ref{s:digression}), and
in particular the Levi subgroups defined by \eqref{e:levis} are not standard.
\end{remarkplain}

\newpage

$$
G_2:\quad \begin{dynkinDiagram}[Kac,extended,root radius=.12cm, edge length=1.0cm]{G}{2}
\dynkinLabelRoots{1,2,3}
\end{dynkinDiagram}
$$

\medskip

\hskip.8in
\begin{tabular}{|l|l|l|l|l|l|}
\hline
w&   d    &  Kac diagram &good&Centralizer\\\hline
$12$ & $6$ &   $111$ & $\Delta^2$&$*$ \\\hline
$1212$ & $3$ &  $110$ & $\Delta^2$&$A_1$ \\\hline
$w_0$ & $2$ &  $010$ & $\Delta^2$&$2A_1$ \\\hline
  \end{tabular}

  \bigskip  \bigskip

$$
^3D_4:\quad \begin{dynkinDiagram}[Kac,extended,root radius=.12cm, edge length=1.0cm]{D}[3]{4}
\dynkinLabelRoots{1,2,1}
\end{dynkinDiagram}
$$

\medskip

\hskip.7in
\begin{tabular}{|l|l|l|l|l|l|l|l|}
\hline
w&   d    &  Kac diagram &good&Centralizer\\\hline
  $12$ & $12$ &   $111$ & $\Delta^2$&$*$ \\\hline
  $132132$ & $6$ &   $010$ & $\Delta^2\Delta_{23}^4$&$2A_1$ \\\hline
  $1323$ & $6$ &   $101$ & $\Delta^2$&$A_1$ \\\hline
      $13213423$ & $3$ &   $001$ & $\Delta^2$&$A_2$ \\\hline
  \end{tabular}

\newpage

$$
F_4:\quad \begin{dynkinDiagram}[Kac,extended,root radius=.12cm, edge length=1.0cm]{F}{4}
\dynkinLabelRoots{1,2,3,4,2}
\end{dynkinDiagram}
$$

\medskip
\setlength\extrarowheight{3pt}

\hskip.5in
\begin{tabular}{|l|l|l|l|l|}
\hline
Name&   d    &  Kac diagram &good&Centralizer\\\hline
$F_4$ & $12$ &   $11111$ & $\Delta^2$&$*$ \\\hline
$B_4$ & $8$ &   $11101$ & $\Delta^2$&$A_1$ \\\hline
$F_4(a1)$ & $6$ &  $10101$ & $\Delta^2$&$2A_1$ \\\hline
$D_4$ & $6$ &  $11100$ & $\Delta^2\Delta_{34}^4$&$A_2$ \\\hline

$C_3+A_1$ & $6$ &   $01010$ & $\Delta^2\Delta_{12}^4$&$3A_1$ \\\hline
$D_4(a1)$ & $4$ &   $10100$ & $\Delta^2$&$A_1+A_2$ \\\hline
$A_3+\wt A_1$ & $4(8)$ &   $02010$ & $\Delta^2\Delta_{23}^2$&$3A_1$ \\\hline
$A_2+\wt A_2$ & $3$ &   $00100$ & $\Delta^2$&$2A_2$ \\\hline
$4A_1$ & $2$ &   $01000$ & $\Delta^2$&$A_1+C_3$\\\hline
\end{tabular}

\medskip

The conjugacy class $A_3+\wt A_1$ of $W$ has order $4$, but its lift
to a semisimple conjugacy class has order $8$ \cite{AH}*{Theorem B}.

\bigskip\bigskip

$$
^2E_6:\quad \begin{dynkinDiagram}[Kac,extended,root radius=.12cm, edge length=1.0cm]{E}[2]{6}
\dynkinLabelRoots{1,2,3,2,1}
\end{dynkinDiagram}
$$

\medskip
\setlength\extrarowheight{5pt}
\hskip0in
\begin{tabular}{|l|l|l|l|l|l|l|l|}
\hline
  w&   d    &  Kac diagram &good&Centralizer\\\hline
$1254$                            &  18  &  $11111$   & $\Delta^2$&*\\\hline
$123143$                  &  12  &  $11011$   &$\Delta^2$&$A_1$\\\hline
  $45423145$                   &  10  &  $01011$  &  $\Delta^2\Delta_4^8$&$2A_1$\\\hline
$1231431543165431$   &  $6$ & $ 1 1 0 0 0$   &$\Delta^2\Delta_{1356}^4$&$B_3$\\\hline
  $425423456542345$   &  6   &  $00100$    &$\Delta^2\Delta_{2345}^2$&$2A_2$\\\hline
  $23423465423456$   &  6    &    $00011$     & $\Delta^2\Delta_{24}^4$&$A_3$\\\hline
  $124315436543$   & $6$ & $01001$ &$\Delta^2$ & $A_1+B_2$ \\\hline
$142314354231365431$ &  4  &  $00010$   &$\Delta^2$&$A_1+A_3$\\\hline
  $w_0$     & $2$ &   $00001$ & $\Delta^2$&$C_4$\\\hline
\end{tabular}

\newpage

$$
E_6:\quad \begin{dynkinDiagram}[Kac,extended,root radius=.12cm, edge length=1.0cm]{E}{6}
\dynkinLabelRoots{1,1,2,3,2,1,2}
\end{dynkinDiagram}
$$

\medskip

\hskip.4in
\begin{tabular}{|l|l|l|l|l|l|}
  \hline
  Name&   d &  Kac diagram &good&Centralizer\\\hline
      &         &      $\phantom{11}1$&&\\
      &         &      $\phantom{11}1$&&\\
  $E_6$ & $12$ &   $11111$ & $\Delta^2$&$*$ \\\hline
        &          &      $\phantom{11}1$&&\\
      &         &      $\phantom{11}1$&&\\
  $E_6(a1)$ & $9$ &   $11011$ & $\Delta^2$&$A_1$ \\\hline
        &         &      $\phantom{11}1$&&\\
      &         &      $\phantom{11}0$&&\\
  $E_6(a2)$ & $6$ &  $10101$ & $\Delta^2$&$3A_1$ \\\hline
             &  &      $\phantom{11}0$&&\\
&           &          $\phantom{11}1$&&\\
  $A_5+A_1$ & $6$ &  $01010$ & $\Delta^2\Delta_{24}^4$&$4A_1$ \\\hline
            &  &      $\phantom{11}0$&&\\
 &          &          $\phantom{11}0$&&\\
  $3A_2$ & $3$ &   $00100$ & $\Delta^2$&$3A_2$ \\\hline
\end{tabular}

\newpage

$$
E_7:\quad \begin{dynkinDiagram}[Kac,extended,root radius=.12cm, edge length=1.0cm]{E}{7}
\dynkinLabelRoots{1,2,3,4,3,2,1,2}
\end{dynkinDiagram}
$$

\bigskip

{\hskip.3in
\begin{tabular}{|l|l|l|l|l|}
  \hline
  
  Name&   d &  Kac diagram &good&Centralizer\\\hline
        &          &      $\phantom{111}1$&&\\
  $E_7$     & $18$ &  $1111111$ & $\Delta^2$&$*$ \\\hline
          &         &      $\phantom{111}1$&&\\
  $E_7(a1)$ & $14$ &  $1110111$ & $\Delta^2$&$A_1$\\\hline
                &    &      $\phantom{111}1$&&\\
  $E_7(a2)$ & $12$ &  $1101011$ & $\Delta^2\Delta_{257}^2$&$2A1$ \\\hline
      &      &          $\phantom{111}1$&&\\
  $E_7(a3)$ & $30$ &   $3212123$ & $\Delta^2\Delta_{24}^4$&$*$ \\\hline
        &      &          $\phantom{111}1$&&\\
  $D_6+A_1$ & $10$ &  $0101010$ & $\Delta^2\Delta_{24}^8$&$4A_1$ \\\hline
        &      &         $\phantom{111}0$&&\\
  $A_7 $     & $8$ &  $0101010$ & $\Delta^2\Delta_{257}^2\Delta_{2}^4$&$2A_1+A_2$ \\\hline
        &      &          $\phantom{111}0$&&\\
  $E_7(a4)$ & $6$ &   $1001001$ & $\Delta^2$&$2A_2+A_1$ \\\hline
        &      &         $\phantom{111}1$&&\\
  $D_6(a2)+A_1$ & $6$ &  $0100010$ & $\Delta^2\Delta_{13}^4$&$2A_1+A_3$ \\\hline
        &      &        $\phantom{111}0$&&\\
  $A_5+A_2$  & $6$ &   $0010100$ & $\Delta^2\Delta_{2345}^2$&$3A_2$ \\\hline
        &      &         $\phantom{111}1$&&\\
  $D_4+3A_1$ & $6$ &   $0001000$ & $\Delta^2\Delta_{24567}^4$&$2A_3$ \\\hline
        &      &          $\phantom{111}0$&&\\
  $2A_3+A_1$ & $4$ &  $0001000$ & $\Delta^2\Delta_{257}^2$&$2A_3+A_1$ \\\hline        
        &      &         $\phantom{111}1$&&\\
  $7A_1$     & $2$ &  $00000000$ & $\Delta^2$&$A_7$ \\\hline
\end{tabular}
}

\newpage
$$
E_8:\quad \begin{dynkinDiagram}[Kac,extended,root radius=.12cm, edge length=1.0cm]{E}{8}
\dynkinLabelRoots{1,2,3,4,5,6,4,2,3}
\end{dynkinDiagram}
$$

\medskip

\hskip.2in
\begin{tabular}{|l|l|l|l|l|l|l|l|}
\hline
Name&   d &  Kac diagram &good&\\\hline                    
        &      &          $\phantom{11}1$&&\\
  $E_8$ & $30$ &  $11111111$ & $\Delta^2$&$*$ \\\hline
        &      &          $\phantom{11}0$&&\\
  $D_8(a2)$ & $30$   & $12102030$ &$\Delta^6\Delta_{2456}^4\Delta_5^{20}$ &$4A_1$ \\\hline
        &      &          $\phantom{11}1$&&\\
$E_8(a1)$ & $24$  &  $11011111$ & $\Delta^2$ &$A_1$ \\\hline
        &      &          $\phantom{11}1$&&\\
$E_8(a2)$ & $20$  & $11010111$ & $\Delta^2$ & $2A_1$ \\\hline
        &      &          $\phantom{11}0$&&\\
$E_7A_1$ & $18$  & $11101010$ & $\Delta^2\Delta_{24}^{16}$ & $4A_1$ \\\hline
        &      &          $\phantom{11}1$&&\\
$E_8(a4)$ & $18$  & $01010111$ & $\Delta^2\Delta_{24}^4$ & $3A_1$ \\\hline
        &      &          $\phantom{11}0$&&\\
$E_8(a5)$ & $15$  &  $10101011$ & $\Delta^2$ & $4A_1$ \\\hline
        &      &          $\phantom{11}0$&&\\
$D_8$ & $14$ & $10101010$ & $\Delta^2\Delta_{2}^{12}$ & $5A_1$ \\\hline
        &      &          $\phantom{11}0$&&\\
$E_8(a3)$ & $12$  &$10100101$ & $\Delta^2$ & $3A_1+A_2$ \\\hline
        &      &          $\phantom{11}0$&&\\
$E_8(a7)$ & $12$ & $01010011$ & $\Delta^2\Delta_{2345}^{2}$ & $A_1+2A_2$ \\\hline
        &      &          $\phantom{11}1$&&\\
$E_6+A_2$ & $12$ &  $00100100$ & $\Delta^2\Delta_{2345}^{6}$ & $3A_2$ \\\hline
        &      &          $\phantom{11}0$&&\\
$D_5(a1)+A_3$ & $12$ & $01002000$ & $\Delta^2\Delta_{123456}^{2}$ & $3A_1+A_3$ \\\hline
        &      &          $\phantom{11}0$&&\\
$D_8(a1)$ & $12$ & $00101010$ & $\Delta^2\Delta_{4578}^{4}$ & $4A_1+A_2$ \\\hline

\end{tabular}

\pagestyle{headings}
\hskip0in
\begin{tabular}{|l|l|l|l|l|l|l|l|}
\hline
  Name&   d&  Kac diagram &good&Centralizer\\\hline
          &      &          $\phantom{11}0$&&\\
$E_7(a2)+A_1$ & $12$  &$11001010$ & $\Delta^2\Delta_{2345}^{2}\Delta_{24}^{8}$ & $2A_1+A_3$ \\\hline
          &      &          $\phantom{11}0$&&\\
$E_8(a6)$ & $10$  &$00100101$ & $\Delta^2$ & $2A_1+2A_2$ \\\hline
          &      &          $\phantom{11}0$&&\\
$D_6+2A_1$ & $10$  &$11001000$ & $\Delta^2\Delta_{2456}^{8}$ & $2A_3$ \\\hline
          &      &          $\phantom{11}0$&&\\
$A_8$ & $9$  &$00100100$ & $\Delta^2\Delta_{34}^{4}$ & $A_1+3A_2$ \\\hline
        &      &          $\phantom{11}0$&&\\
$A_1+A_7$ & $8$  &$01001000$ & $\Delta^2\Delta_{2345}^{2}\Delta_{25}^{4}$ & $A_1+2A_3$ \\\hline
        &      &          $\phantom{11}0$&&\\
$D_8(a3)$ & $8$  &$00100010$ & $\Delta^2$ & $2A_1+2A_3$ \\\hline
        &      &          $\phantom{11}0$&&\\
$E_8(a8)$ & $6$  &$00010001$ & $\Delta^2$ & $A_3+A_4$ \\\hline

        &      &          $\phantom{11}0$&&\\
$E_7(a4)+A_1$ & $6$  &$01000010$ & $\Delta^2\Delta_{34}^{4}$ & $2A_1+A_5$ \\\hline
        &      &          $\phantom{11}1$&&\\
$E_6(a2)+A_2$ & $6$  &$00000100$ & $\Delta^2\Delta_{2345}^{2}$ & $A_2+A_5$ \\\hline
        &      &          $\phantom{11}0$&&\\
$2D_4$ & $6$  &$10001000$ & $\Delta^2\Delta_{1367}^{4}$ & $A_3+D_4$ \\\hline
        &      &          $\phantom{11}0$&&\\
$D_4+4A_1$ & $6$  &$11000000$ & $\Delta^2\Delta_{123456}^{4}$ & $A_7$ \\\hline
        &      &          $\phantom{11}0$&&\\
$A_1+A_2+A_5$ & $6$  &$00100000$ & $\Delta^2\Delta_{234578}^{2}\Delta_{78}^{2}$ & $A_1+A_2+A_5$ \\\hline
        &      &          $\phantom{11}0$&&\\
$2A_4$ & $5$  &$00010000$ & $\Delta^2$ & $2A_4$ \\\hline
        &      &          $\phantom{11}0$&&\\
$2A_3+2A_1$ &  $4$ &$01000000$ & $\Delta^2\Delta_{2345}^{2}$ & $A_1+A_7$ \\\hline
        &      &          $\phantom{11}0$&&\\
$2D_4(a1)$ & $4$  & $00001000$ & $\Delta^2$ & $A_3+D_5$ \\\hline
        &      &          $\phantom{11}1$&&\\
$4A_2$ & $3$  &$00000000$ & $\Delta^2$ & $A_8$ \\\hline
        &      &          $\phantom{11}0$&&\\  
  $8A_1$ & $2$ &$10000000$ & $\Delta^2$ & $D8$ \\\hline
                                          
\end{tabular}

\bibliographystyle{plain}
\begin{bibdiv}
\begin{biblist}

\bib{atlas_software}{article}{
       title={Atlas of lie groups and representations software package},
        date={2019},
        note={www.liegroups.org},
}

\bib{AH}{article}{
      author={Adams, Jeffrey},
      author={He, Xuhua},
       title={Lifting of elements of {W}eyl groups},
        date={2017},
        ISSN={0021-8693},
     journal={J. Algebra},
      volume={485},
       pages={142\ndash 165},
         url={http://dx.doi.org/10.1016/j.jalgebra.2017.04.018},
      review={\MR{3659328}},
}

\bib{bourbaki_4-6}{book}{
      author={Bourbaki, Nicolas},
       title={Lie groups and {L}ie algebras. {C}hapters 4--6},
      series={Elements of Mathematics (Berlin)},
   publisher={Springer-Verlag, Berlin},
        date={2002},
        ISBN={3-540-42650-7},
         url={https://doi.org/10.1007/978-3-540-89394-3},
        note={Translated from the 1968 French original by Andrew Pressley},
      review={\MR{1890629}},
}

\bib{bouwknegt}{article}{
      author={Bouwknegt, Peter},
       title={Lie algebra automorphisms, the {W}eyl group, and tables of shift
  vectors},
        date={1989},
        ISSN={0022-2488},
     journal={J. Math. Phys.},
      volume={30},
      number={3},
       pages={571\ndash 584},
         url={https://doi-org.proxy-um.researchport.umd.edu/10.1063/1.528422},
      review={\MR{984692}},
}

\bib{carter_conjugacy_classes}{article}{
      author={Carter, R.~W.},
       title={Conjugacy classes in the {W}eyl group},
        date={1972},
        ISSN={0010-437X},
     journal={Compositio Math.},
      volume={25},
       pages={1\ndash 59},
      review={\MR{0318337}},
}

\bib{CH}{article}{
      author={Ciubotaru, Dan},
      author={He, Xuhua},
       title={The cocenter of the graded affine {H}ecke algebra and the density
  theorem},
        date={2016},
        ISSN={0022-4049},
     journal={J. Pure Appl. Algebra},
      volume={220},
      number={1},
       pages={382\ndash 410},
  url={https://doi-org.proxy-um.researchport.umd.edu/10.1016/j.jpaa.2015.06.018},
      review={\MR{3393467}},
    }

    \bib{debacker_reeder}{article}{
      author={DeBacker, Stephen},
      author={Reeder, Mark},
       title={Depth-zero supercuspidal {$L$}-packets and their stability},
        date={2009},
        ISSN={0003-486X},
     journal={Ann. of Math. (2)},
      volume={169},
      number={3},
       pages={795\ndash 901},
  url={https://doi-org.proxy-um.researchport.umd.edu/10.4007/annals.2009.169.795},
      review={\MR{2480618}},
}

\bib{DM}{article}{
    AUTHOR = {Digne, Fran\c{c}ois}
    author =  {Michel, Jean},
     TITLE = {Quasi-semisimple elements},
   JOURNAL = {Proc. Lond. Math. Soc. (3)},
  FJOURNAL = {Proceedings of the London Mathematical Society. Third Series},
    VOLUME = {116},
      YEAR = {2018},
    NUMBER = {5},
     PAGES = {1301--1328},
      ISSN = {0024-6115},
   MRCLASS = {20G15},
  MRNUMBER = {3805058},
MRREVIEWER = {Giancarlo Lucchini Arteche},
       DOI = {10.1112/plms.12121},
       URL = {https://doi.org/10.1112/plms.12121},
       review={\MR{3805058}}, 
}

\bib{gkp}{article}{
      author={Geck, Meinolf},
      author={Kim, Sungsoon},
      author={Pfeiffer, G\"{o}tz},
       title={Minimal length elements in twisted conjugacy classes of finite
  {C}oxeter groups},
        date={2000},
        ISSN={0021-8693},
     journal={J. Algebra},
      volume={229},
      number={2},
       pages={570\ndash 600},
  url={https://doi-org.proxy-um.researchport.umd.edu/10.1006/jabr.1999.8253},
      review={\MR{1769289}},
}

\bib{geck_michel_good}{article}{
      author={Geck, Meinolf},
      author={Michel, Jean},
       title={``{G}ood'' elements of finite {C}oxeter groups and
  representations of {I}wahori-{H}ecke algebras},
        date={1997},
        ISSN={0024-6115},
     journal={Proc. London Math. Soc. (3)},
      volume={74},
      number={2},
       pages={275\ndash 305},
  url={https://doi-org.proxy-um.researchport.umd.edu/10.1112/S0024611597000105},
      review={\MR{1425324}},
}

\bib{geck_pfeiffer}{book}{
      author={Geck, Meinolf},
      author={Pfeiffer, G\"otz},
       title={Characters of finite {C}oxeter groups and {I}wahori-{H}ecke
  algebras},
      series={London Mathematical Society Monographs. New Series},
   publisher={The Clarendon Press, Oxford University Press, New York},
        date={2000},
      volume={21},
        ISBN={0-19-850250-8},
      review={\MR{1778802}},
}

\bib{rgly}{article}{
  author={Reeder, Mark},
      author={Levy, Paul},
      author={Yu, Jiu-Kang},
      author={Gross, Benedict~H.},
       title={Gradings of positive rank on simple {L}ie algebras},
        date={2012},
        ISSN={1083-4362},
     journal={Transform. Groups},
      volume={17},
      number={4},
       pages={1123\ndash 1190},
  url={https://doi-org.proxy-um.researchport.umd.edu/10.1007/s00031-012-9196-3},
      review={\MR{3000483}},
}

\bib{he_minimal_length_double_cosets}{article}{
      author={He, Xuhua},
       title={Minimal length elements in some double cosets of {C}oxeter
  groups},
        date={2007},
        ISSN={0001-8708},
     journal={Adv. Math.},
      volume={215},
      number={2},
       pages={469\ndash 503},
  url={https://doi-org.proxy-um.researchport.umd.edu/10.1016/j.aim.2007.04.005},
      review={\MR{2355597}},
}

\bib{he_nie_minimal_finite}{article}{
      author={He, Xuhua},
      author={Nie, Sian},
       title={Minimal length elements of finite {C}oxeter groups},
        date={2012},
        ISSN={0012-7094},
     journal={Duke Math. J.},
      volume={161},
      number={15},
       pages={2945\ndash 2967},
  url={https://doi-org.proxy-um.researchport.umd.edu/10.1215/00127094-1902382},
      review={\MR{2999317}},
}

\bib{kac_book}{book}{
      author={Kac, Victor~G.},
       title={Infinite-dimensional {L}ie algebras},
     edition={Third},
   publisher={Cambridge University Press, Cambridge},
        date={1990},
        ISBN={0-521-37215-1; 0-521-46693-8},
  url={https://doi-org.proxy-um.researchport.umd.edu/10.1017/CBO9780511626234},
      review={\MR{1104219}},
}

\bib{ls}{article}{
      author={Langlands, R.~P.},
      author={Shelstad, D.},
       title={On the definition of transfer factors},
        date={1987},
        ISSN={0025-5831},
     journal={Math. Ann.},
      volume={278},
      number={1-4},
       pages={219\ndash 271},
      review={\MR{MR909227 (89c:11172)}},
}
\bib{levy_exceptional}{article}{
      author={Levy, Paul},
       title={K{W}-sections for {V}inberg's {$\theta$}-groups of exceptional
  type},
        date={2013},
        ISSN={0021-8693},
     journal={J. Algebra},
      volume={389},
       pages={78\ndash 97},
  url={https://doi-org.proxy-um.researchport.umd.edu/10.1016/j.jalgebra.2013.04.035},
      review={\MR{3065993}},
    }
\bib{ov}{book}{
      author={Onishchik, A.~L.},
      author={Vinberg, {\`E}.~B.},
       title={Lie groups and algebraic groups},
      series={Springer Series in Soviet Mathematics},
   publisher={Springer-Verlag},
     address={Berlin},
        date={1990},
        ISBN={3-540-50614-4},
        note={Translated from the Russian and with a preface by D. A. Leites},
      review={\MR{91g:22001}},
}

\bib{panyushev}{article}{
      author={Panyushev, Dmitri~I.},
       title={On invariant theory of {$\theta$}-groups},
        date={2005},
        ISSN={0021-8693},
     journal={J. Algebra},
      volume={283},
      number={2},
       pages={655\ndash 670},
  url={https://doi-org.proxy-um.researchport.umd.edu/10.1016/j.jalgebra.2004.03.032},
      review={\MR{2111215}},
}

\bib{reeder_torsion}{article}{
      author={Reeder, Mark},
       title={Torsion automorphisms of simple {L}ie algebras},
        date={2010},
        ISSN={0013-8584},
     journal={Enseign. Math. (2)},
      volume={56},
      number={1-2},
       pages={3\ndash 47},
  url={https://doi-org.proxy-um.researchport.umd.edu/10.4171/LEM/56-1-1},
      review={\MR{2674853}},
    }

    \bib{serre_linear}{book}{
      author={Serre, Jean-Pierre},
       title={Linear representations of finite groups},
   publisher={Springer-Verlag, New York-Heidelberg},
        date={1977},
        ISBN={0-387-90190-6},
        note={Translated from the second French edition by Leonard L. Scott,
  Graduate Texts in Mathematics, Vol. 42},
      review={\MR{0450380}}}

\bib{springer_regular}{article}{
      author={Springer, T.~A.},
       title={Regular elements of finite reflection groups},
        date={1974},
        ISSN={0020-9910},
     journal={Invent. Math.},
      volume={25},
       pages={159\ndash 198},
  url={https://doi-org.proxy-um.researchport.umd.edu/10.1007/BF01390173},
  review={\MR{0354894}},
}

\end{biblist}
\end{bibdiv}

\end{document}